\newtheorem{thm}{Theorem}
\newtheorem{lem}[thm]{Lemma}
\newtheorem{prop}[thm]{Proposition}
\newtheorem{cor}[thm]{Corollary}
\newtheorem{conj}[thm]{Conjecture}
\theoremstyle{remark}
\newcommand{\R}{\mathbb{R}}
\newcommand{\CJ}{\mathcal{J}}
\newcommand{\EF}{\mathrm{E_{log}}}
\renewcommand{\S}{\mathbb{S}}
\newcommand{\Impl}{\mathop{\qquad\Rightarrow\qquad}}
\title{On Gegenbauer Point Processes on the unit interval
\footnote{Data sharing not applicable to this article as no datasets were generated or analysed during the current study.}}
\author{Carlos Beltrán\thanks{CB ( ORCID: 0000-0002-0689-8232; \textbf{corresponding author}): Universidad de Cantabria. Partially supported by Ministerio de
Economía y Competitividad (MINECO) through grants MTM2017-83816-P,
MTM2017-90682-REDT and by Banco de Santander and Universidad de Cantabria
grant 21.SI01.64658.} , Antonia Delgado$^\dagger$, Lidia Fernández\thanks{AD (ORCID: 0000-0003-1547-6653) and LF (ORCID: 0000-0001-7418-3231): Universidad de Granada. Partially supported by Ministerio de Ciencia, Innovación y Universidades (MICINN), and Fondo Europeo de Desarrollo Regional (FEDER), through grant PGC2018-094932-B-I00.} , Joaquín Sánchez--Lara\thanks{JSL (ORCID: 0000-0003-1969-9643): Universidad de Granada. Partially supported by Ministerio de Economía y Competitividad (MINECO), through grant MTM2015-71352-P.}}
\date{\today} 
\begin{document}

\maketitle
\begin{abstract}
  In this note we compute the logarithmic energy of points in the unit interval [-1,1] chosen from different Gegenbauer Determinantal Point Processes. We check that all the different families of Gegenbauer polynomials yield the same asymptotic result to third order, we compute exactly the value for Chebyshev polynomials and we give a closed expresion for the minimal possible logarithmic energy. The comparison suggests that DPPs cannot match the value of the minimum beyond the third asymptotic term.
\end{abstract}

\section{Introduction}
 Let $\mathrm{K}\subseteq\R^d$ be an infinite compact set. The logarithmic energy of $n$ points $ x_1,\ldots,x_n\in \mathrm{K}, x_i\neq x_j$ for $i\neq j$ is
\begin{equation}\label{eq:Elog}
\EF(x_1,\ldots,x_n)=-\sum_{i\neq j}\log\|x_i-x_j\|.
\end{equation}
The minimum value of this energy is then
\[
\mathcal{E}_{\mathrm log}(\mathrm{K},n)=\min_{x_1,\ldots,x_n\in \mathrm{K}}\EF(x_1,\ldots,x_n).
\]
Points which minimize the logarithmic energy (i.e. points with energy equal to $\mathcal{E}_{\mathrm log}(\mathrm{K},n)$) are called Fekete points of $\mathrm{K}$. Note that an alternative formulation is: a collection of $n$ points in $\mathrm K$ is a set of Fekete points if the product of their mutual distances is as large as it can be.

The study of Fekete points is an area of very active research, see the classical surveys \cite{W1952,KS1997} or the recent and very complete monography \cite{BHS2019}. The case that $\mathrm{K}=\S^2\subseteq\R^3$ is the unit sphere is the topic of problem number $7$ in Smale's list \cite{S1998} (the problem was actually posed by Shub and Smale in \cite{SS1993}). In particular, it is of greatest interest to describe the value of $\mathcal{E}_{\mathrm log}(\mathrm{K},n)$.

This problem of finding Fekete points is fully solved if $\mathrm{K}=[-1,1]$ is the unit interval since Fejer \cite{F1932}: the unique minimizer of \eqref{eq:Elog} is the set consisting on the $2$ extremes of the interval, plus the $n-2$ zeros of a Gegenbauer polynomial, namely $C_{n-2}^{3/2}$. Recall that for any fixed $\lambda>0$, the sequence of Gegenbauer polynomials $C_n^\lambda $ is a sequence of polynomials of degree $n=0,1,2,\ldots$ such that
\[
C_n^\lambda (1)=\binom{2\lambda+n-1}{n},\quad \int_{-1}^1C_n^\lambda (x)C_m^\lambda (x)w^\lambda(x)\,dx=0\quad \text{for $m\neq n$},
\]
where
\begin{equation}\label{eq:w}
 w^\lambda (x)=\frac{\Gamma(\lambda+1)}{\sqrt{\pi}\Gamma(\lambda+1/2)}(1-x^2)^{\lambda-1/2}.
\end{equation}
That is to say, they are a sequence of orthogonal polynomials w.r.t. the weight $w^\lambda(x)$.  We have not found in the literature an explicit value of the minimal logarithmic energy in the unit interval (which is a nontrivial calculation from the description of Fekete points). This is our first result:
\begin{thm}\label{th:EnergiaMinimaIntervalo}
Let $\epsilon_n=\mathcal{E}_{\mathrm log}([-1,1],n)$ be  the energy of $n$ Fekete points in the interval $[-1,1]$. Then,
\begin{align*}
\epsilon_n=&-n(n-1)\log (2)-3(n-1)\log(n-1)-n\log(n)-4\sum_{j=1}^{n-2}j\log(j)+ \sum_{j=1}^{2n-2}j\log (j)\\
=&\log(2) n^2-n\log(n)-2\log(2)n-\frac{1}{4}\log(n)+O(1).
\end{align*}
\end{thm}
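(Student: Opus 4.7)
The plan is to combine Fejer's characterization with a direct computation via the Vandermonde/derivative identity. The interior Fekete points $y_1,\ldots,y_{n-2}$ are the zeros of $C_{n-2}^{3/2}$, and applying $\tfrac{d}{dx}C_n^\lambda=2\lambda C_{n-1}^{\lambda+1}$ with $\lambda=1/2$ identifies them with the zeros of $P_{n-1}'$, where $P_m$ denotes the Legendre polynomial. I would set $P(x)=(1-x^2)P_{n-1}'(x)$, a polynomial of degree $n$ whose zero set $\{Y_0,\dots,Y_{n-1}\}$ is exactly the Fekete set, with leading coefficient $c=-(n-1)a_{n-1}$, where $a_{n-1}=\binom{2n-2}{n-1}/2^{n-1}$ is the leading coefficient of $P_{n-1}$. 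The standard identity $\prod_k|P'(Y_k)|=|c|^n\prod_{i<j}|Y_i-Y_j|^2$ then yields
\begin{equation*}
\epsilon_n \;=\; n\log|c|-\sum_{k=0}^{n-1}\log|P'(Y_k)|.
\end{equation*}

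Next I would evaluate $P'$ pointwise. Using the classical values $P_m'(\pm1)=(\pm1)^{m-1}m(m+1)/2$, the endpoints contribute $|P'(\pm 1)|=2|P_{n-1}'(\pm 1)|=n(n-1)$. At an interior zero $y_k$, substituting $P_{n-1}'(y_k)=0$ into Legendre's equation $(1-x^2)P_{n-1}''-2xP_{n-1}'+n(n-1)P_{n-1}=0$ gives $P'(y_k)=(1-y_k^2)P_{n-1}''(y_k)=-n(n-1)P_{n-1}(y_k)$. Consequently
\begin{equation*}
\sum_{k=0}^{n-1}\log|P'(Y_k)|\;=\;n\log\bigl(n(n-1)\bigr)+\sum_{k=1}^{n-2}\log|P_{n-1}(y_k)|.
\end{equation*}
The remaining product $\prod_k P_{n-1}(y_k)$ is, up to the leading-coefficient factor of $P_{n-1}'$, the resultant $\Res(P_{n-1}',P_{n-1})$, which by the standard identity $\Res(f,f')=\pm a_f\,\mathrm{Disc}(f)$ equals $\pm a_{n-1}\,\mathrm{Disc}(P_{n-1})$. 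So the problem reduces to evaluating $\mathrm{Disc}(P_{n-1})$ in closed form.

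For this final ingredient I would invoke Stieltjes' formula
\begin{equation*}
\mathrm{Disc}(P_m)\;=\;2^{-m(m-1)}\prod_{j=1}^{m}j^{3j-2m}(m+j)^{m-j},
\end{equation*}
a specialisation of his discriminant formula for Jacobi polynomials. Substituting with $m=n-1$, splitting the product over $j=1,\ldots,n-1$ and $j=n,\ldots,2n-2$, and combining with $(2n-2)\log a_{n-1}=(2n-2)\log((2n-2)!)-2(n-1)^2\log 2-4(n-1)\log((n-1)!)$, the $\log((2n-2)!)$ and $\log((n-1)!)$ pieces telescope exactly, leaving precisely the two $j\log j$ sums together with the elementary $\log 2$, $n\log n$, $(n-1)\log(n-1)$ contributions stated in the theorem. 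The asymptotic line then follows by inserting Stirling's approximation together with the Barnes-$G$/hyperfactorial expansion $\sum_{j=1}^{m}j\log j=\bigl(\tfrac{m^2}{2}+\tfrac{m}{2}+\tfrac{1}{12}\bigr)\log m-\tfrac{m^2}{4}+O(1)$ into the closed form and cancelling the dominant pieces. The main obstacle I anticipate is not conceptual but clerical: keeping track of signs through the resultant identity and aligning the Stieltjes exponents with $(2n-2)\log a_{n-1}$ so that the factorial contributions cancel cleanly.
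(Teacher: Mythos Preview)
Your proposal is correct and reaches the same closed form, but the route differs from the paper's. The paper works directly with $C_{n-2}^{3/2}=P_{n-2}^{(1,1)}$: it splits the energy into the endpoint--endpoint, endpoint--interior and interior--interior contributions, evaluates the first two via $P_{n-2}^{(1,1)}(\pm1)=n-1$ and the leading coefficient $\kappa$, and handles the last via the Szeg\H{o}/Stieltjes discriminant $D_{n-2}^{(1,1)}$. You instead pass to the Legendre polynomial through $C_{n-2}^{3/2}\propto P_{n-1}'$, package the entire Fekete set as the zero set of $P(x)=(1-x^2)P_{n-1}'(x)$, and use the Vandermonde--derivative identity together with the Legendre ODE to reduce everything to $\mathrm{Disc}(P_{n-1})$ via a resultant. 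Both proofs ultimately rest on the same Stieltjes discriminant formula, just specialised at $(\alpha,\beta)=(1,1)$ versus $(0,0)$. Your approach is a little more unified (all $n$ points treated by a single $P'(Y_k)$ evaluation, with the ODE doing the work at interior points), at the cost of the extra resultant bookkeeping you flag; the paper's decomposition is more pedestrian but has one fewer layer. One small imprecision in your outline: when you write ``up to the leading-coefficient factor of $P_{n-1}'$'' you actually need that factor to the power $\deg P_{n-1}=n-1$, i.e.\ $\prod_k P_{n-1}(y_k)=\Res(P_{n-1}',P_{n-1})/[(n-1)a_{n-1}]^{\,n-1}$; once that exponent is in place the algebra closes up exactly as you describe.
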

Zeros of sequences of orthogonal polynomials are known to be well--distributed according to different criteria, and more generally in other contexts zeros of functions of increasing degree exhibit good separation properties and attain somehow low values of the logarithmic energy. For example, the eigenvalues of $n\times n$ matrices of the form $A^{-1}B$ where $A,B$ are random with complex Gaussian entries (called the {\em spherical ensemble} \cite{K2006,AZ2015}) taken onto the unit sphere $\S^2\subseteq\R^3$ through the inverse stereographic projection have, on the average, a logarithmic energy that matches the minimum value $\mathcal{E}_{\mathrm log}(\S^2,n)$ up to second order asymptotics. The zeros of random polynomials whose coefficients are complex Gaussians with some carefully chosen variances, sent again to the sphere with the same projection, have even better properties regarding logarithmic energy, see \cite{ABS2011}.

However in general for other sets it is not possible to find families of functions whose zeros have these good properties, and it is thus quite common to take alternative approaches. A very popular method is that of Determinantal Point Processes (DPPs) that we briefly describe now (see \cite{HKPV2009} for a detailed description of the theory).

 Let $n\geq1$. We endow the infinite compact set $\mathrm K$  with some measure $\sigma$, and we let $\mathcal{H}\subseteq L^2(\mathrm{K},\sigma)$ be some $(n+1)$--dimensional linear subspace of the space of squared integrable functions defined on $\mathrm{K}$. Following \cite{HKPV2009}, the Macci-Soshnikov theorem implies that there exists a random point process associated with $\mathcal{H}$ that has the following properties:
 \begin{enumerate}
 \item With probability $1$, the random process outputs $n+1$ different points in $\mathrm{K}$.
 \item For any given $\sigma$--integrable function $f:\mathrm{K}\times \mathrm{K}\to\R$ (possibly undefined in the diagonal), the expected value of the  sum $\sum_{i\neq j} f(x_i,x_j)$ when $x_0,\ldots,x_n$ are the output of the random process, is given by the formula
 \begin{equation}\label{eq:dpps}
 \mathrm E\left(\sum_{i\neq j} f(x_i,x_j)\right)=\iint_{x,y\in \mathrm K}(K_\mathcal{H}(x,x)K_\mathcal{H}(y,y)- |K_\mathcal{H}(x,y)|^2)f(x,y)\,d\sigma(x)\,d\sigma(y),
 \end{equation}
 where $K_\mathcal{H}:\mathrm K\times\mathrm K\to\R$ is the projection kernel onto $\mathcal{H}$. In other words, if $f_0,\ldots,f_n$ is an orthogonal basis of $H$ then
 \[
 K_\mathcal{H}(x,y)=\sum_{j=0}^n f_j(x)f_j(y),\quad x,y\in \mathrm K.
 \]
 \end{enumerate}
The second property implies that the random points exhibit some repulsion, and has been used to give upper bounds on the minimum value of the energy $\mathcal{E}_{\mathrm log}(\mathrm{K},n)$ (and other energies) for different sets: in \cite{AZ2015} for the $2$--sphere $\S^2$, in \cite{BMOC2016} for the $d$--sphere  $\S^d$, in \cite{BE2018} (see also \cite{B2014}) for the complex projective space, in \cite{MOC2018} for the flat torus and in \cite{BF---} for the rotation group $SO(3)$.

Despite all this success, an important gap in the theory remains open: how do DPPs compare to optimal point distribution in the case of the unit interval? Namely, although in that case we know exactly the position of the Fekete points and the value of the minimal energy (Theorem \ref{th:EnergiaMinimaIntervalo}), it seems that no deep study has been made of the value of the energy of points coming from the DPPs in that interval (see \cite{HM2019} for some computations that would lead to first order asymptotics), and  a comparison of the power of this technique with the exact solution is in order.

In this paper we study the following questions:
\begin{enumerate}
\item[I)] Given a measure $\sigma$ in $[-1,1]$, the most straightforward choice for the $(n+1)$--dimensional subspace $\mathcal{H}\subseteq L^2([-1,1],\sigma)$ is the subspace generated by the first $n+1$ orthogonal polynomials associated to $\sigma$. Which is the optimal choice of $\sigma$? Namely, which choice of $\sigma$ gives the smallest value for \eqref{eq:dpps}?
\item[II)] Fekete points are actually the $2$ extreme points, plus the zeros of a Gegenbauer polynomial. Is it better also to take the extreme points and then the points coming from a DPP, or is it better to just take the points coming from the DPP?
\item[III)] A collection of natural choices for $\sigma$ is given by $d\sigma=w^\lambda dx$ where $\lambda\in(-1/2,\infty)$, $w^\lambda$ is given by \eqref{eq:w}, and $dx$ is Lebesgue measure in $[-1,1]$. The associated subspace for $n+1$ points is then generated by the first $n+1$ Gegenbauer polynomials $C_{k}^\lambda $, $0\leq k\leq n$. How does the expected energy of points coming from these processes, computed via \eqref{eq:dpps}, compare to the minimal energy $\epsilon_{n+1}$ given in Theorem \ref{th:EnergiaMinimaIntervalo}?
\end{enumerate}
We now state our second main result which solves item III) and its corollary that gives a partial answer to item II).

\begin{thm}\label{th:main2}
Let $d\sigma=w^\lambda dx$, $\lambda\in(-1/2,\infty)$ fixed, and let $\mathcal{H}_{n+1}$ be the subspace generated by the first $n+1$ orthogonal polynomials associated to $\sigma$. In other words,
\[
\mathcal{H}_{n+1}=Span(C_{0}^\lambda ,\ldots,C_{n}^\lambda ).
\]
Then, the expected value of the logarithmic energy of the corresponding DPP is
\[
\mathrm E(\lambda,n+1)=(n+1)^2\log2-(n+1)\log (n+1)+(1-\gamma-2\log 2)n+o(n).
\]
In particular, the dependence on $\lambda$ falls into the remainder $o(n)$. Moreover, in the case $\lambda=0$ (which corresponds to Chebyshev polynomials) we get the exact expresion
\begin{align*}
\mathrm E(0,n+1)=&(n+1)^2\log 2-\left((n+1)\log2+\frac34H_n+nH_{2n-1}+\frac{H_{2n}}{2}-n+\frac12\right)\\
=&(n+1)^2\log2-(n+1)\log (n+1)+(1-\gamma-2\log 2)n-\frac14\log n+O(1),
\end{align*}
where $H_n=1+1/2+\cdots+1/n$ is the $n$--th harmonic number and $\gamma$ is the Euler--Mascheroni constant.
\end{thm}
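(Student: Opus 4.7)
My plan is to realise the Gegenbauer DPP as a $\beta=2$ Jacobi ensemble and compute the expectation via Selberg's integral formula. Writing $\hat C_j^\lambda$ for the orthonormal Gegenbauer polynomials (polynomials of degree exactly $j$), the Vandermonde factorisation $\det[\hat C_j^\lambda(x_i)]_{i,j=0}^n = \bigl(\prod_j\kappa_j\bigr)\prod_{i<j}(x_j-x_i)$, with $\kappa_j$ the leading coefficient of $\hat C_j^\lambda$, turns the joint density of the DPP into
\[
p(x_0,\ldots,x_n) = \frac{1}{Z_\lambda}\prod_{i<j}(x_i-x_j)^2\prod_i w^\lambda(x_i) = \frac{1}{Z_\lambda}\,e^{-\EF(x_0,\ldots,x_n)}\prod_i w^\lambda(x_i).
\]
Introducing the one-parameter family
\[
Z_\lambda(t) = \int_{[-1,1]^{n+1}} \prod_{i<j}|x_i-x_j|^{2t}\prod_i w^\lambda(x_i)\,dx
\]
reduces the computation to $\mathrm E(\lambda,n+1) = -\frac{d}{dt}\log Z_\lambda(t)\big|_{t=1}$.

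Next I would evaluate $Z_\lambda(t)$ in closed form. After the affine change $y=(x+1)/2$, this is a classical Jacobi--Selberg integral, and Selberg's formula yields
\[
Z_\lambda(t) = c_\lambda^{n+1}\,2^{2\lambda(n+1)+tn(n+1)}\prod_{j=0}^n \frac{\Gamma(\lambda+\tfrac12+jt)^2\,\Gamma(1+(j+1)t)}{\Gamma(2\lambda+1+(n+j)t)\,\Gamma(1+t)},
\]
with $c_\lambda$ the normalising constant from \eqref{eq:w}. Logarithmic differentiation at $t=1$ then produces the exact identity
\[
\mathrm E(\lambda,n+1) = -n(n+1)\log 2 + (n+1)(1-\gamma) + \sum_{j=0}^n\bigl[(n+j)\psi(2\lambda+n+j+1) - 2j\psi(\lambda+\tfrac12+j) - (j+1)\psi(j+2)\bigr].
\]

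For the Chebyshev case $\lambda=0$ I would substitute the classical values $\psi(k+1)=H_k-\gamma$ and $\psi(j+\tfrac12)=-\gamma-2\log 2+2H_{2j}-H_j$. All $\gamma$-terms cancel identically. Using the telescoping identity $-T_{n-1}+2T_n-T_{n+1} = -H_n-1$ for $T_N := \sum_{k=1}^N kH_k$, together with the closed form $\sum_{j=1}^n H_{2j} = (2n+1)H_{2n}/2 - n + H_n/4$, the remaining double sums collapse to the stated expression in $H_n$, $H_{2n-1}$, $H_{2n}$ (invoking also the rewriting $nH_{2n-1}+H_{2n}/2 = (2n+1)H_{2n}/2-1/2$).

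For general $\lambda$ I would expand each digamma via $\psi(z) = \log z - 1/(2z) + O(z^{-2})$. The main obstacle --- and the reason the $\lambda$-dependence lands in the $o(n)$ remainder --- is the cancellation of the first-order $\lambda$-corrections: each $\psi(2\lambda+n+j+1)$ exceeds its $\lambda=0$ value by $2\lambda/(n+j)+O((n+j)^{-2})$ and each $\psi(\lambda+\tfrac12+j)$ exceeds its $\lambda=0$ value by $\lambda/j+O(j^{-2})$, so when multiplied by $(n+j)$ respectively by $2j$ and summed these contributions each yield $+2\lambda n + O(\log n)$ and cancel. What remains is $\lambda$-independent modulo $O(\log n)$, and Euler--Maclaurin applied to $\sum_{k=1}^N k\log k$ and to $\log N!$ delivers the three-term expansion $(n+1)^2\log 2-(n+1)\log(n+1)+(1-\gamma-2\log 2)n+o(n)$.
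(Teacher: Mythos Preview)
Your approach is correct and genuinely different from the paper's. You exploit the fact that the projection DPP built from the Gegenbauer polynomials is exactly the symmetric $\beta=2$ Jacobi ensemble, so that $\mathrm E(\lambda,n+1)=-\left.\dfrac{d}{dt}\log Z_\lambda(t)\right|_{t=1}$ with $Z_\lambda(t)$ given in closed form by Selberg's integral. This yields, for \emph{every} $\lambda$, the exact digamma identity you wrote, from which both the Chebyshev closed form and the general asymptotics follow by elementary manipulations; the cancellation $2\lambda(n+1)-2\lambda n=O(1)$ you isolate is precisely why the $\lambda$-dependence is $O(\log n)$.

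The paper takes a completely different, analytic route: it writes $\mathrm E(\lambda,n+1)=L_1-L_2$ with $L_i$ the two double integrals in \eqref{eq:L1}--\eqref{eq:L2}. For $\lambda=0$ it evaluates $L_1$ and $L_2$ exactly via the substitution $x=\cos\theta$, $y=\cos\varphi$ and trigonometric summation (Proposition~\ref{prop:integral}, Lemma~\ref{lem:L2}). For general $\lambda$ it never gets an exact value: it bounds $L_1$ using the equilibrium--measure decomposition $\nu_n=(n{+}1)\mu+\varepsilon_n$ together with Szeg\H{o}--type pointwise estimates for $C_k^\lambda$ (Lemmas~\ref{lem:boundChat}--\ref{lem:boundK}), and it extracts the main term of $L_2$ by approximating the Christoffel--Darboux kernel by a Dirichlet-type kernel (Proposition~\ref{KerBound}) and integrating over a shrinking diagonal strip, with several pages devoted to showing the off-diagonal pieces are $o(n)$.

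Your Selberg argument is considerably shorter and in fact gives more: an exact expression for every $\lambda$, not just $\lambda=0$. The paper's method, by contrast, is self-contained (no appeal to Selberg) and produces kernel asymptotics that would transfer to weights for which no Selberg-type evaluation is available. One small point worth making explicit in your write-up: the uniform bound $\bigl|\psi(\lambda+\tfrac12+j)-\psi(\tfrac12+j)-\lambda/j\bigr|\le C_\lambda j^{-2}$ for $j\ge1$ (from boundedness of $\psi''$ on $[1,\infty)$) is what justifies summing the $O(j^{-2})$ errors to $O(1)$.
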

Comparing the asymptotics of Theorem  \ref{th:main2} with $\epsilon_{n+1}$ from Theorem \ref{th:EnergiaMinimaIntervalo} we conclude that the points generated by the DPP have a greater constant in the $O(n)$ term: $1-\gamma-2\log2$ versus $-2\log2$. Thus, there is an excess of $(1-\gamma)n\approx 0.423n$. It thus seems that the power of the technique of DPPs is unsuficient to fit the correct $O(n)$ term in the minimal energy expression.

Our partial answer to item II) above is:
\begin{cor}\label{cor:puroymixto}
Let $\lambda\in(-1/2,\infty)$ and consider the two following point processes in the unit interval:
\begin{enumerate}
\item Generate $n+3$ points using the point process of Theorem \ref{th:main2}.
\item Generate $n+1$ points using the point process of Theorem \ref{th:main2}, and add the two extremes $\pm1$ of the unit interval.
\end{enumerate}
Then, the expected energy of the two point processes is equal up to $o(n)$. Moreover, if $\lambda=0$ then the first process has smaller energy than the second one, the difference being in the $O(\log n)$ term.
\end{cor}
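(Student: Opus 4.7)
The approach is to express the expected energy of the mixed process as $\mathrm E(\lambda,n+1)$ plus a correction coming from the two fixed endpoints, asymptotically evaluate that correction using the one--point intensity of the DPP, and then compare with the expansion of $\mathrm E(\lambda,n+3)$ provided by Theorem~\ref{th:main2}.

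If $x_0,\ldots,x_n$ denotes the output of the DPP of Theorem~\ref{th:main2} and $\mathrm E_{\mathrm{mix}}(\lambda,n+3)$ the expected energy of the enlarged set $\{x_0,\ldots,x_n\}\cup\{\pm1\}$, a direct expansion of $\EF$ gives
\begin{equation*}
\EF(x_0,\ldots,x_n,1,-1)=\EF(x_0,\ldots,x_n)-2\sum_{i=0}^n\log|1-x_i|-2\sum_{i=0}^n\log|1+x_i|-2\log 2.
\end{equation*}
Taking expectations and using the one--point case of \eqref{eq:dpps} with intensity $\rho_n(x)=K_{\mathcal H_{n+1}}(x,x)w^\lambda(x)$ yields the reduction
\begin{equation*}
\mathrm E_{\mathrm{mix}}(\lambda,n+3)=\mathrm E(\lambda,n+1)-2(I_++I_-)-2\log 2,\qquad I_{\pm}:=\int_{-1}^1\log|1\mp x|\,\rho_n(x)\dx,
\end{equation*}
so the whole statement boils down to estimating $I_\pm$.

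For arbitrary $\lambda$, I would first prove $I_\pm=-(n+1)\log 2+o(n)$. The probability measure $\rho_n(x)\dx/(n+1)$ converges weakly on $[-1,1]$ to the arcsine equilibrium measure $\frac{\dx}{\pi\sqrt{1-x^2}}$, against which $\log|1\mp x|$ integrates to $-\log 2$, and the blow--up of $\rho_n$ near $\pm 1$ is tamed by Mehler--Heine type bounds on Gegenbauer polynomials at the endpoints. Substituting into the reduction and using $\mathrm E(\lambda,n+1)=(n+1)^2\log 2-(n+1)\log(n+1)+(1-\gamma-2\log 2)n+o(n)$ from Theorem~\ref{th:main2}, one obtains
\begin{equation*}
\mathrm E_{\mathrm{mix}}(\lambda,n+3)=(n^2+6n+3)\log2-(n+1)\log(n+1)+(1-\gamma-2\log 2)n+o(n),
\end{equation*}
and comparing with $\mathrm E(\lambda,n+3)=(n+3)^2\log 2-(n+3)\log(n+3)+(1-\gamma-2\log 2)(n+2)+o(n)$ via the elementary Taylor expansion $(n+3)\log(n+3)-(n+1)\log(n+1)=2\log n+2+O(1/n)$ shows that the difference is $2\log n+O(1)=o(n)$, which proves the first assertion.

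For $\lambda=0$ the integrals $I_\pm$ can be computed in closed form. With $x=\cos\theta$ the density becomes
\begin{equation*}
\rho_n(\cos\theta)\sin\theta=\frac{1}{\pi}\left(n+\tfrac12+\frac{\sin((2n+1)\theta)}{2\sin\theta}\right),
\end{equation*}
and the Fourier expansions $\log(2\sin^2(\theta/2))=-\log 2-2\sum_{k\ge 1}\cos(k\theta)/k$ together with $\sin((2n+1)\theta)/\sin\theta=1+2\sum_{j=1}^n\cos(2j\theta)$ and orthogonality of the cosines on $[0,\pi]$ yield the closed form $I_+=I_-=-(n+1)\log 2-H_n/2$. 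Plugging this into the reduction and using the exact expressions for $\mathrm E(0,n+1)$ and $\mathrm E(0,n+3)$ provided by Theorem~\ref{th:main2}, the contributions of orders $n^2$ and $n$ cancel and a short calculation with $H_m=\log m+\gamma+O(1/m)$ and $H_{2n+3}-H_{2n-1}=O(1/n)$ gives $\mathrm E_{\mathrm{mix}}(0,n+3)-\mathrm E(0,n+3)=4\log n+O(1)>0$, so the purely determinantal process has strictly smaller energy and the discrepancy lies in the $O(\log n)$ term, as claimed.

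The main obstacle is the $o(n)$ control of $I_\pm$ for general $\lambda>0$: the density $\rho_n$ blows up near $\pm 1$ at a $\lambda$--dependent rate, so bulk weak convergence alone does not suffice to integrate the logarithmic singularity and one must split the domain into a bulk part and shrinking endpoint windows where local asymptotics of the Christoffel--Darboux kernel are invoked. In the Chebyshev case this difficulty evaporates because the kernel has an explicit trigonometric form and the computation reduces to Fourier orthogonality.
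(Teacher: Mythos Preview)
Your decomposition is exactly the paper's: the correction $-2(I_++I_-)$ is the paper's $2L_3$, and your $\lambda=0$ computation of $I_\pm=-(n+1)\log 2-\tfrac12 H_n$ agrees with the paper's Lemma~\ref{lem:auxn} (your Fourier/Dirichlet-kernel argument is essentially the same trigonometric calculation). Your final coefficient $4\log n+O(1)$ for the discrepancy at $\lambda=0$ is correct and in fact sharper than what the paper states.

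The genuine difference is in the treatment of $I_\pm$ for general $\lambda$. You propose to get $I_\pm=-(n+1)\log 2+o(n)$ via weak convergence of $\rho_n/(n+1)$ to the arcsine measure together with endpoint control, and you rightly flag this as the delicate point. The paper takes a completely different, non-asymptotic route: it evaluates $\int_{-1}^1\widehat C_k^\lambda(x)^2w^\lambda(x)\log(1-x)\,dx$ in closed form (by differentiating a hypergeometric integral in a parameter $s$, Lemmas~\ref{lem:auxL3previo}--\ref{lem:auxL3}) and then sums over $k$, obtaining an exact expression for $L_3$ in terms of digamma functions (Theorem~\ref{TeoL3}) whose expansion is $L_3=2n\log 2-(2\lambda-1)\log n+O(1)$. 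This buys strictly more than your approach: it gives the $\log n$ coefficient for \emph{every} $\lambda$, not just $\lambda=0$. Conversely, your approach is more robust and conceptually simpler; in fact the ingredients you need are already in the paper (Lemma~\ref{lem:boundK} for the bulk approximation of $K_n^\lambda(x,x)w^\lambda(x)$ by $(n+1)/(\pi\sqrt{1-x^2})$ and Lemma~\ref{lem:boundChat} for the endpoint windows), and the argument would mirror the $L_1$ estimate in Section~\ref{prop:L1genlambda} almost verbatim. So your plan is sound, just coarser than the paper's exact computation.
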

We finish the introduction suggesting a solution to the optimal measure problem:
\begin{conj}
The answer to Question I) above is: for any fixed $n\geq1$, the optimal measure is $d\sigma=w^\lambda dx$ for some $\lambda\in(-1/2,\infty)$. The value of $\lambda$ may depend on $n$.
\end{conj}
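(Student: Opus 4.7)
My plan is to set up the problem variationally: treat the expected energy $\mathrm{E}[\sigma]$ as a functional on measures, derive Euler--Lagrange equations, and verify that the Gegenbauer weights $w^\lambda$ satisfy them for a suitable $\lambda=\lambda(n)$. The first step is a convenient reformulation that simplifies the $\sigma$--dependence of the kernel. Writing $d\sigma=w\,dx$, letting $q_0^{(w)},\ldots,q_n^{(w)}$ be the orthonormal basis of $L^2(w)\cap\mathbb{P}_n$, and setting $\widetilde q_k=q_k^{(w)}w^{1/2}$, one checks $\widetilde K(x,x)=K(x,x)w(x)$ and $|\widetilde K(x,y)|^2=|K(x,y)|^2 w(x)w(y)$, where $\widetilde K$ is the projection kernel in $L^2$ (Lebesgue) onto the subspace $V_w=w^{1/2}\mathbb{P}_n$. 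Substituting into \eqref{eq:dpps} yields
\[
\mathrm{E}[w]=-\iint_{[-1,1]^2}\bigl(\widetilde K(x,x)\widetilde K(y,y)-|\widetilde K(x,y)|^2\bigr)\log|x-y|\,dx\,dy,
\]
so the optimization is over $(n+1)$--dimensional subspaces of $L^2([-1,1])$ of the special form $w^{1/2}\mathbb{P}_n$.

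Next, perturb $w\mapsto w+\varepsilon\eta$ with $\int\eta\,dx=0$ (scale invariance in $w$ lets us normalize total mass) and compute $\partial \mathrm{E}/\partial\varepsilon|_{\varepsilon=0}$. The perturbation of the orthonormal basis is a standard first--order Gram--Schmidt calculation, or equivalently the perturbation of a projection under a change of metric; together with the explicit $\log|x-y|$ pairing, it produces a stationarity equation that must hold for all admissible $\eta$. The natural form is an integral identity
\[
\int_{-1}^{1}\bigl(K(x,x)K(y,y)-|K(x,y)|^2\bigr)\bigl(-\log|x-y|\bigr)\,d\sigma(y)=c\,K(x,x)\quad\text{a.e.\ on }[-1,1],
\]
up to correction terms arising from the implicit $w$--dependence of $K$. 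I would then check that the Gegenbauer weight $w^\lambda$ (for an appropriate $\lambda$ depending on $n$) satisfies this equation using (i) the Christoffel--Darboux formula for $\sum_{k=0}^n C_k^\lambda(x)C_k^\lambda(y)/\|C_k^\lambda\|^2$, (ii) the closed form of $\int_{-1}^1\log|x-y|(1-y^2)^{\lambda-1/2}\,dy$ as a finite combination of Chebyshev functions of $x$, and (iii) the second--order differential equation satisfied by $C_n^\lambda$. A symmetry reduction $w(x)=w(-x)$ (inherited from the reflection invariance of the $\log|x-y|$ kernel) should be used early to cut the search space in half.

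The main obstacle is twofold. First, the Euler--Lagrange equation is nonlinear in $w$ (both the orthonormal basis and the integration measure depend on $w$), and the Gegenbauer--specific identities required to verify it will change shape with $n$; pinning down $\lambda(n)$ explicitly and proving that Gegenbauer is the \emph{unique} stationary point among positive symmetric weights is delicate. Second, Theorem \ref{th:main2} shows that $\mathrm{E}(\lambda,n+1)$ depends on $\lambda$ only through the $o(n)$ remainder, so the functional $\mathrm{E}[w]$ is extremely flat near its minimum: a second--variation/positivity analysis is needed to distinguish a minimum from a saddle, and this apparent flatness is, in my view, the real reason the statement is a conjecture. A reasonable intermediate target would be to settle the cases $n=1,2$ by reducing to an explicit finite--dimensional optimization, and to compute numerically the optimal $\lambda(n)$ for moderate $n$ as supporting evidence before attempting a general proof.
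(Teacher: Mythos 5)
The statement you are attacking is labelled a \emph{conjecture} in the paper: the authors give no proof of it, and their own Theorem \ref{th:main2} is precisely the evidence that the question is delicate, since the dependence of the expected energy on $\lambda$ is confined to the $o(n)$ remainder. So there is no paper proof to compare against; the only question is whether your proposal settles the statement, and it does not — it is a research program, not a proof, as you yourself concede. That said, one ingredient is genuinely correct and useful: since an orthonormal basis $q_0,\ldots,q_n$ of $\mathbb{P}_n$ in $L^2(w\,dx)$ becomes, after multiplication by $w^{1/2}$, an orthonormal system in $L^2(dx)$, the functional indeed depends only on the subspace $w^{1/2}\mathbb{P}_n\subseteq L^2([-1,1])$, and the kernel identities $\widetilde K(x,x)=K(x,x)w(x)$, $|\widetilde K(x,y)|^2=|K(x,y)|^2w(x)w(y)$ hold. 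This reformulation cleanly separates the geometry (a constrained family of subspaces) from the measure and would be a sensible starting point for any serious attempt.

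The gaps, however, are structural, not technical. First, your Euler--Lagrange equation is never derived: the clause ``up to correction terms arising from the implicit $w$--dependence of $K$'' hides exactly the hard part of the computation, namely the first variation of the projection kernel under a perturbation of the weight, which produces cross terms between $w^{1/2}\mathbb{P}_n$ and its orthogonal complement and does not obviously collapse to the clean identity you wrote down; until that identity is stated exactly, there is nothing one could verify for $w^\lambda$, with or without Christoffel--Darboux. Second, even a full verification of stationarity would prove only that the Gegenbauer weights are critical points, whereas the conjecture asserts they are \emph{the} global minimizers among all measures: you would additionally need existence of a minimizer (the admissible class is not compact and the functional can degenerate as mass concentrates), plus either a classification of critical points or a second-variation/convexity argument — and the near-flatness in $\lambda$ that you correctly identify makes any such argument fragile. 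Third, your symmetry reduction $w(x)=w(-x)$ needs justification: reflection invariance shows the reflected weight has the same energy, but restricting to symmetric weights requires uniqueness of the minimizer or a convexity argument along the symmetrization, neither of which is available. In short: the observation in your first paragraph is worth keeping, the cases $n=1,2$ and numerics would be legitimate supporting evidence, but the statement remains exactly what the paper says it is — open.
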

\section{Proof of Theorem  \ref{th:EnergiaMinimaIntervalo}}\label{sec:proof1}

%

Recall Jacobi's polynomials
$$P_n^{(\alpha,\beta)}(x)= \frac{1}{2^n}\binom{2n+\alpha+\beta}{n}x^n+O(x^{n-1})$$
that are a generalization of Gegenbauer's polynomials, in particular satisfying $C_{n-2}^{3/2}=P_{n-2}^{(1,1)}$, see for example  \cite{AS1964} where we also find the following classical facts:
$$P_n^{(\alpha,\beta)}(1)=\binom{n+\alpha}{n}=\frac{(\alpha+1)_n}{n!}\Impl
P_{n-2}^{(1,1)}(1)=\frac{(2)_{n-2}}{(n-2)!}=(n-1).$$

Let  $\kappa$ be the leading coefficient of $P_{n-2}^{(1,1)}$,
$$\kappa=\frac{1}{2^{n-2}}\binom{2n-2}{n-2}
=\frac{1}{2^{n-2}}\frac{(n+1)_{n-2}}{(n-2)!}\,.$$
We need to analyze the energy of the $n$ point set consisting of $x_1=-1$, $x_n=1$ and $x_2,\dots, x_{n-1}$ the zeros of $P_{n-2}^{(1,1)}$, that is
\begin{align*}
\epsilon_n&=\sum_{i\neq j}\log\frac{1}{|x_i-x_j|}=\\
&=-2\log 2 -2\log\prod_{i=2}^{n-1}(1-x_i)-2\log\prod_{i=2}^{n-1}(1+x_i)-2\log\prod_{1<i<j<n}|x_i-x_j|\\
&=-2\log 2-2\log(P_{n-2}^{(1,1)}(1))-2\log((-1)^{n-2}P_{n-2}^{(1,1)}(-1))+4\log\kappa
-2\log\prod_{1<i<j<n}|x_i-x_j|\\
&=-2\log 2-4\log(P_{n-2}^{(1,1)}(1))+4\log\kappa
-2\log\prod_{1<i<j<n}|x_i-x_j|\\
&=-2\log 2-4\log(n-1)+4\log\kappa-2\log\prod_{1<i<j<n}|x_i-x_j|\\
&=-2\log 2 -4\log(n-1) +(4+2(n-2)-2)\log\kappa- \log\Big(\kappa^{2(n-2)-2}\prod_{1<i<j<n}(x_i-x_j)^2\Big)
\\
&=-2\log 2 -4\log(n-1) +2(n-1)\log\kappa-\log D_{n-2}^{(1,1)},
\end{align*}
where
$$D_{n-2}^{(1,1)}=2^{-(n-2)(n-3)}\prod_{j=1}^{n-2}j^{j-2n+6}(j+1)^{2j-2}
(n+j)^{n-2-j}\,$$
is the discriminant of $P_{n-2}^{(1,1)}$, whose value is known (see  \cite[Th. 6.71, p. 143]{S1975}). Plugging this and the value of $\kappa$ in the formula for $\epsilon_n$ we thus conclude:
\begin{align*}
\epsilon_n
&=-n(n-1)\log 2-4\log(n-1)+2(n-1)\left(\sum_{j=1}^{n-2}\log(n+j)- \sum_{j=1}^{n-2}\log(j)\right)\\
&\quad- \sum_{j=1}^{n-2}\Big((j-2n+6)\log(j)+2(j-1)\log(j+1)+(n-2-j)\log(n+j)\Big).
\end{align*}
Expanding and simplifying this expresion we get the simpler formula claimed in the theorem.
Using
$$\log(n-a)=\log(n)+\log\left(\frac{n-a}{n}\right)=\log(n)+\log\left(1-\frac{a}{n}\right)
=\log(n)-\frac{a}{n}+O(n^{-2})
$$
and Lemma \ref{LemaHiperFactorial} we also get the value for the asymptotic expansion of $\epsilon_n$, finishing the proof of the theorem.

\section{First integral formulas}\label{sec:firstformulas}
In order to compute the expected value of the logarithmic energy for points coming from the DPPs, we need to write down the integral in \eqref{eq:dpps} for our choice of $\sigma$. In order to do so, we first need an orthonormal sequence of polynomials. Fix $\lambda\in(-1/2,\infty)$, $\lambda\neq0$, and consider the normalized Gegenbauer polynomials
$$\widehat{C}_n^\lambda=\gamma_n^\lambda C_n^\lambda\,,\qquad \text{with }\quad
\gamma_n^\lambda=
\sqrt{\frac{n!(n+\lambda)}{\lambda (2\lambda)_n}}\,,$$
which describe an orthonormal basis of the subspace they span.

Recall the following identities
\begin{align*}
\frac{d}{dx}C_n^\lambda(x)=&2\lambda C_{n-1}^{\lambda+1}(x),\\
\frac{d}{dx}\left[C_{n-1}^{\lambda+1}(x)w^{\lambda+1}(x)\right]=&
-\frac{n(n+2\lambda)(\lambda+1)}{\lambda(2\lambda+1)}C_n^\lambda(x)w^\lambda(x).
\end{align*}
We can now compute the kernel $K_n^\lambda$ (for the point process of $n+1$ points) using the  Christoffel--Darboux summation formula (see \cite{S1975}):
\begin{align}
K_n^\lambda(x,y)&=\sum_{j=0}^n\widehat{C}_j^\lambda(x)\widehat{C}_j^\lambda(y)=
\sum_{j=0}^n\frac{j!(j+\lambda)}{\lambda(2\lambda)_j}C_j^\lambda(x)C_j^\lambda(y)\\
&=
\frac{(n+1)!}{2\lambda(2\lambda)_n}
\frac{C_{n+1}^\lambda(x)C_n^\lambda(y)-C_n^\lambda(x)C_{n+1}^\lambda(y)}{x-y}\label{Nucleoxy}\,.
\end{align}
The formula above is valid for $x\neq y$. The case $x=y$ is obtained by taking limits:
\begin{align}\label{Nucleox}
K_n^\lambda(x,x)=&\frac{(n+1)!}{2\lambda(2\lambda)_n}
\left(C_n^\lambda(x)(C_{n+1}^\lambda)'(x)-C_{n+1}^\lambda(x)(C_n^\lambda)'(x)\right)\nonumber
\\=&\frac{(n+1)!}{(2\lambda)_n}
\left(C_n^\lambda(x)C_{n}^{\lambda+1}(x)-C_{n+1}^\lambda(x)C_{n-1}^{\lambda+1}(x)\right)\,.
\end{align}
The case $\lambda=0$, corresponding to $w(x)=w^0(x)=\pi^{-1}(1-x^2)^{-1/2}$, must be done independently since some of the expressions above make no sense if $\lambda=0$. The orthogonal polynomials are just the classical Chebyshev polynomials after some normalization:

$$\widehat{C}_n(x)=\widehat{C}_n^0(x)=\left.\widehat{C}_n^\lambda(x)\right|_{\lambda=0}
=\left\{\begin{array}{ll}
1,&\qquad\text{if }n=0\,,\\
\sqrt{2}T_n(x)=\sqrt{2}\cos(n\arccos x)\,,&\qquad\text{if }n\geq 1\,.
\end{array}\right.$$
The kernel also admits a simpler expression
\begin{align*}
K_n^0(\cos\theta,\cos\varphi)&=\sum_{j=0}^n\widehat{C}_j^0(\cos\theta)\widehat{C}_j^0(\cos\varphi)=
1+2\sum_{j=1}^n\cos(j\theta)\cos(j\varphi),\\
K_n^0(\cos\theta,\cos\theta)&=
1+2\sum_{j=1}^n\cos^2(j\theta)=1+n+ \frac{\cos((n+1)\theta)\sin(n\theta)}{\sin\theta},
\end{align*}
where for the last sum we have used  \cite[Sec. 1.35]{GR2015}.
According to \eqref{eq:dpps} we aim to compute, for any given $n\geq2$ and $\lambda\in(-1/2,\infty)$, the integrals
\begin{align}
L_1=L_1(\lambda,n)=&\iint_{[-1,1]^2}K_n^\lambda(x,x)K_n^\lambda(y,y)w^\lambda(x)w^\lambda(y)\log\frac{1}{|x-y|}\,d(x,y),\label{eq:L1}\\
L_2=L_2(\lambda,n)=&\iint_{[-1,1]^2}K_n^\lambda(x,y)^2w^\lambda(x)w^\lambda(y)\log\frac{1}{|x-y|}\,d(x,y).\label{eq:L2}
\end{align}
The expected value computed in Theorem \ref{th:main2} is equal to $L_1-L_2$.
In the case $\lambda=0$ we succeed in computing these integrals exactly. In the rest of the cases, we get to an asymptotic value up to $o(n)$.

\section{Proof of Theorem \ref{th:main2} for $\lambda=0$}
We first compute $L_1$ and $L_2$ for the case $\lambda=0$. For simplicity, we omit in this section the dependence on $\lambda$. Note that
\begin{align*}
L_1=&\iint_{[-1,1]^2}K_n(x,x)K_n(y,y)\log\frac{1}{|x-y|} w(x) w(y)\,d(x,y)\\
=&\log 2\iint_{[-1,1]^2}K_n(x,x)K_n(y,y) w(x) w(y)\,d(x,y)\\&+\iint_{[-1,1]^2}K_n(x,x)K_n(y,y)\log\frac{1}{2|x-y|} w(x) w(y)\,d(x,y)\\
=&(n+1)^2 \,\log 2 +\sum_{k,\ell=0}^n\CJ_{k,\ell},
	\end{align*}

where
\[
\CJ_{k,\ell}=\iint_{[-1,1]^2}\widehat C_k(x)^2\widehat C_\ell(y)^2 w(x) w(y)\log\frac{1}{2|x-y|}\,d(x,y).
\]
\begin{prop}\label{prop:integral}
	Let $k,\ell\geq0$. Then.
	\[
	\begin{cases}
	\CJ_{k,k}=\frac1{4k},&k\geq1,\\
	\CJ_{k,\ell}=0,&\text{ for any other choice of $k$ and $\ell$.}\\
    \end{cases}
	\]
\end{prop}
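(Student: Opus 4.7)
\medskip

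\noindent\textbf{Proof plan.} The natural move is to pass to the trigonometric variables $x=\cos\theta$, $y=\cos\varphi$ with $\theta,\varphi\in[0,\pi]$, so that $w(x)\,dx=\pi^{-1}d\theta$ and
\[
\widehat C_k(\cos\theta)^2=\begin{cases}1,&k=0,\\ 1+\cos(2k\theta),&k\ge1,\end{cases}
\]
and similarly for $\ell,\varphi$. Thus $\CJ_{k,\ell}$ becomes an integral over $[0,\pi]^2$ of the product of two $1$-variable trigonometric polynomials against the kernel $L(\theta,\varphi)=\log\frac1{2|\cos\theta-\cos\varphi|}$.

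The key identity is the Fourier expansion of this log kernel. Starting from the classical series
\[
\log\bigl|2\sin(\alpha/2)\bigr|=-\sum_{m=1}^{\infty}\frac{\cos(m\alpha)}{m}
\]
applied to $\alpha=\theta-\varphi$ and $\alpha=\theta+\varphi$, together with the factorization $2(\cos\theta-\cos\varphi)=-4\sin\tfrac{\theta+\varphi}{2}\sin\tfrac{\theta-\varphi}{2}$ and the product-to-sum formula $\cos m(\theta-\varphi)+\cos m(\theta+\varphi)=2\cos(m\theta)\cos(m\varphi)$, one obtains
\[
L(\theta,\varphi)=2\sum_{m=1}^{\infty}\frac{\cos(m\theta)\cos(m\varphi)}{m},
\]
with convergence in $L^2([0,\pi]^2)$ (the partial sums have squared norm $\sum 4/m^2\cdot(\pi/2)^2$, which converges).

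Next I would plug this expansion into the integral and interchange sum and integral, legitimate because $\widehat C_k(\cos\theta)^2\otimes\widehat C_\ell(\cos\varphi)^2\in L^2([0,\pi]^2)$ and we are computing an inner product by Parseval's theorem. This reduces the problem to
\[
\CJ_{k,\ell}=\frac{2}{\pi^2}\sum_{m=1}^{\infty}\frac{1}{m}\,I_k(m)\,I_\ell(m),\qquad I_k(m)=\int_0^\pi \widehat C_k(\cos\theta)^2\cos(m\theta)\,d\theta.
\]
Using the explicit formulas for $\widehat C_k(\cos\theta)^2$ and the orthogonality $\int_0^\pi\cos(m\theta)\cos(m'\theta)\,d\theta=\tfrac{\pi}{2}\delta_{m,m'}$ for $m,m'\ge1$ (plus $\int_0^\pi\cos(m\theta)\,d\theta=0$ for $m\ge1$), one finds $I_0(m)=0$ for every $m\ge1$ and, for $k\ge1$, $I_k(m)=\tfrac{\pi}{2}\delta_{m,2k}$.

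Substituting these coefficients into the series, the cases $k=0$ or $\ell=0$ make every term vanish; for $k,\ell\ge1$ the only potentially nonzero contribution requires $2k=m=2\ell$, so $\CJ_{k,\ell}=0$ unless $k=\ell$, and in that case the single surviving term gives
\[
\CJ_{k,k}=\frac{2}{\pi^2}\cdot\frac{1}{2k}\cdot\Bigl(\frac{\pi}{2}\Bigr)^2=\frac{1}{4k},
\]
which is the claim. The only delicate point is the interchange of summation and integration, but this is handled cleanly by the $L^2$ interpretation of the Fourier expansion of $L$ together with Parseval; no pointwise control near the diagonal singularity is needed.
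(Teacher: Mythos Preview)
Your proof is correct. The route, however, differs from the paper's. You expand the kernel
\[
L(\theta,\varphi)=\log\frac{1}{2|\cos\theta-\cos\varphi|}=2\sum_{m\ge1}\frac{\cos(m\theta)\cos(m\varphi)}{m}
\]
in $L^2([0,\pi]^2)$ and read off $\CJ_{k,\ell}$ by Parseval, since $\widehat C_k(\cos\theta)^2=1+\cos(2k\theta)$ has a single nonconstant Fourier mode. All cases fall out at once: only $m=2k=2\ell$ survives. The paper instead applies the change of variables of Lemma~\ref{lem:tecnico} to reduce $\CJ_{k,\ell}$ to a single integral in $\alpha=\varphi-\theta$, computes the inner $\theta$-integral case by case (constant in $\alpha$ unless $k=\ell\ge1$, when a $\cos(2k\alpha)$ term appears), and then invokes Lemma~\ref{lem:integralestabla}, which is precisely the Fourier coefficient $\frac{1}{\pi}\int_{-\pi}^{\pi}\cos(2k\alpha)\log\frac{1}{\sqrt{2-2\cos\alpha}}\,d\alpha=\frac{1}{2k}$. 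So the two arguments share the same underlying Fourier identity; you package it as an $L^2$ expansion and an inner product, while the paper keeps everything as finite computations and avoids justifying the interchange of sum and integral. Your approach is more uniform and conceptual; the paper's is slightly more elementary but requires the separate case analysis.
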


\begin{proof}
	We first prove the case $k\neq\ell,\;k,\ell\geq1$. Note that
	\begin{align*}
	\CJ_{k,\ell}=&\frac{4}{\pi^2}\iint_{[-1,1]^2}\frac{\cos^2(k\arccos(x))\cos^2(\ell\arccos(y))}{\sqrt{1-x^2}\sqrt{1-y^2}}\log\frac{1}{2|x-y|}\,d(x,y).
	\end{align*}
	Applying Lemma \ref{lem:tecnico} with $f(x,y)=\cos^2(k\arccos(x))\cos^2(\ell\arccos(y))$, the inner integral in the right hand side becomes
	\begin{multline*}
	U=\int_{-\pi}^{\pi}\cos^2(k\theta)\cos\left(\ell\theta+\ell \alpha\right)^2\,d\theta=\\\int_{-\pi}^{\pi}\cos^2(k\theta)\left(\cos(\ell\theta)\cos(\ell \alpha)-\sin(\ell\theta)\sin(\ell \alpha)\right)^2\,d\theta,
	\end{multline*}
	then, it suffices to check that $U$ is a constant independent of $\alpha$. Indeed, expanding the term in the parenthesis we easily get
	\[
	U=\frac{\pi\cos^2(\ell\alpha)}{2}+\frac{\pi\sin^2(\ell\alpha)}{2}=\frac{\pi}{2}.
	\]
The case $k=0,\ell\geq1$ (equiv. $k\geq1$, $\ell=0$) is very similar. The integral this time is
\[
U=\int_{-\pi}^{\pi}\cos\left(\ell\theta+\ell \alpha\right)^2\,d\theta=\pi,
\]
proving the proposition in that case. The case $k,\ell=0$ is even easier since we are just integrating a constant. The remaining case  is $k=\ell\geq1$, for which we have
\[
U=\int_{-\pi}^{\pi}\cos^2(k\theta)\cos^2\left(k\theta+k \alpha\right)\,d\theta=\frac{\pi}{2}+\frac{\pi}{4}\cos(2k\alpha).
\]
From Lemma \ref{lem:tecnico} we get to
		\begin{align*}
\CJ_{k,k}=&\frac1{2\pi}\int_{-\pi}^{\pi}\cos(2k\alpha) \log\frac1{\sqrt{2-2\cos\alpha}}\,d \alpha\stackrel{\text{Lemma \ref{lem:integralestabla}}}{=}\frac{1}{4k}.
\end{align*}
\end{proof}
As a direct consequence of the previous results we get
\begin{cor}\label{cor:cheb}
If $\lambda=0$, then
\[
L_1=(n+1)^2\log 2+\frac{H_n}{4}.
\]
\end{cor}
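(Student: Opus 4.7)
The plan is essentially bookkeeping: the hard analytic work has already been done in Proposition \ref{prop:integral}, and the corollary just assembles that information with the decomposition of $L_1$ derived immediately above the proposition.

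First I would recall the splitting that was carried out at the beginning of this section. Writing $\log\tfrac{1}{|x-y|}=\log 2+\log\tfrac{1}{2|x-y|}$ inside the integrand of $L_1$, the first summand integrates against $K_n(x,x)K_n(y,y)w(x)w(y)$ to give $(n+1)^2\log 2$, since $\int K_n(x,x)w(x)\,dx$ equals the dimension of $\mathcal{H}_{n+1}$. Expanding the diagonal of the reproducing kernel in the orthonormal basis, $K_n(x,x)=\sum_{k=0}^n \widehat{C}_k(x)^2$, the second summand decomposes as the double sum $\sum_{k,\ell=0}^n \CJ_{k,\ell}$. This is precisely the identity
\[
L_1=(n+1)^2\log 2+\sum_{k,\ell=0}^n \CJ_{k,\ell}
\]
already displayed in the text.

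Next I would invoke Proposition \ref{prop:integral}. It states that $\CJ_{k,\ell}=0$ unless $k=\ell\geq 1$, in which case $\CJ_{k,k}=\tfrac{1}{4k}$. Substituting these values collapses the double sum to its diagonal $k=\ell\geq 1$ part:
\[
\sum_{k,\ell=0}^n \CJ_{k,\ell}=\sum_{k=1}^n \CJ_{k,k}=\sum_{k=1}^n \frac{1}{4k}=\frac{H_n}{4}.
\]
Combining with the previous display yields $L_1=(n+1)^2\log 2+\tfrac{H_n}{4}$, as claimed.

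There is no real obstacle here; the entire content is the vanishing of the off-diagonal terms and the explicit evaluation of the diagonal ones, both of which are the statement of Proposition \ref{prop:integral}. The only item worth double-checking is that the $k=\ell=0$ term indeed vanishes (so that no extra contribution needs to be added before summing the harmonic series), which is precisely the easy case noted in the proof of the proposition where the integrand is a constant times $\log\tfrac{1}{2|x-y|}$ with mean zero against $w(x)w(y)$.
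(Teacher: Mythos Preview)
Your proposal is correct and matches the paper's approach exactly: the paper presents the corollary ``as a direct consequence of the previous results,'' meaning precisely the decomposition $L_1=(n+1)^2\log 2+\sum_{k,\ell}\CJ_{k,\ell}$ combined with the values from Proposition~\ref{prop:integral}, which you have spelled out in full.
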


\begin{lem}\label{lem:L2}
\begin{align*}
	L_2=&(n+1)\log2+H_n+nH_{2n-1}+\frac{H_{2n}}{2}-n+\frac12.
\end{align*}
\end{lem}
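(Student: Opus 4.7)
The plan is to expand $\log\tfrac{1}{|x-y|}$ in the orthonormal Chebyshev basis and then exploit orthogonality to reduce $L_2$ to a finite sum of harmonic numbers. The key analytic input is the classical Fourier series $\log|2\sin(t/2)|=-\sum_{m\geq 1}\cos(mt)/m$; applied to the factorization $\cos\theta-\cos\varphi=-2\sin\tfrac{\theta+\varphi}{2}\sin\tfrac{\theta-\varphi}{2}$ and passed to $x=\cos\theta$, $y=\cos\varphi$, it produces
\[
\log\frac{1}{|x-y|}=\sum_{m=0}^{\infty}c_m\,\widehat{C}_m(x)\,\widehat{C}_m(y),\qquad c_0=\log 2,\;\; c_m=1/m\;\text{for }m\geq 1.
\]

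Inserting this together with $K_n(x,y)^2=\sum_{j,k=0}^{n}\widehat{C}_j(x)\widehat{C}_k(x)\widehat{C}_j(y)\widehat{C}_k(y)$ into \eqref{eq:L2} and integrating term by term (Fubini plus orthonormality of the $\widehat{C}_m$) gives
\[
L_2=\sum_{m=0}^{\infty}c_m\sum_{j,k=0}^{n}T_{j,k,m}^{2},\qquad T_{j,k,m}:=\int_{-1}^{1}\widehat{C}_j\widehat{C}_k\widehat{C}_m\,w\,dx.
\]
The $m=0$ term contributes $(n+1)\log 2$ because $T_{j,k,0}=\delta_{j,k}$. For $m\geq 1$ the triple integrals are computed from the elementary Chebyshev product rule $\widehat{C}_j\widehat{C}_k=T_{|j-k|}+T_{j+k}$ (valid when $j,k\geq 1$), with the cases $j=0$ or $k=0$ handled separately: $T_{0,k,m}=T_{k,0,m}=\delta_{k,m}$, while for $j,k\geq 1$
\[
T_{j,k,m}^{2}=\frac{1}{2}\bigl(\mathbf{1}_{|j-k|=m,\,j\neq k}+\mathbf{1}_{j+k=m}\bigr).
\]

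Feeding this into the sum splits $L_2-(n+1)\log 2$ into three contributions: a boundary piece from $\{j=0\}\cup\{k=0\}$ giving $2H_n$; a \emph{difference} piece $\sum_{d=1}^{n-1}(n-d)/d=nH_{n-1}-(n-1)$; and a \emph{sum} piece $\tfrac{1}{2}\sum_{j,k=1}^{n}(j+k)^{-1}$. This last double harmonic sum is the main obstacle; I would evaluate it by rewriting $\sum_{j,k=1}^{n}(j+k)^{-1}=\sum_{j=1}^{n}(H_{j+n}-H_j)$ and applying the standard identity $\sum_{j=1}^{m}H_j=(m+1)H_m-m$ to obtain $(2n+1)H_{2n}-2(n+1)H_n$. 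Combining the three pieces and simplifying with $H_{n-1}=H_n-1/n$ together with $\tfrac{2n+1}{2}H_{2n}=nH_{2n-1}+\tfrac{H_{2n}}{2}+\tfrac{1}{2}$ yields exactly $(n+1)\log 2+H_n+nH_{2n-1}+H_{2n}/2-n+1/2$, as claimed.
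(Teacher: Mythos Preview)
Your argument is correct and reaches the same formula, but it is organized differently from the paper's proof. The paper expands $K_n(x,y)^2=\sum_{k,\ell}\widehat C_k(x)\widehat C_k(y)\widehat C_\ell(x)\widehat C_\ell(y)$, splits $L_2=P_0+P+2Q+2R+S$ according to whether $k,\ell$ are zero, equal, or distinct, and evaluates each block via the change-of-variables Lemma~\ref{lem:tecnico} together with the one-variable integrals of Lemma~\ref{lem:integralestabla}; the off-diagonal block $R$ is then summed with Lemma~\ref{lem:harmonicsum}. You instead expand the \emph{kernel} $\log\frac{1}{|x-y|}$ in the tensor Chebyshev basis (its Mercer series, with eigenvalues $c_m=1/m$), which by the linearization $\widehat C_j\widehat C_k=T_{|j-k|}+T_{j+k}$ reduces $L_2$ immediately to the triple-integral coefficients $T_{j,k,m}$ and a purely combinatorial sum. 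Your decomposition into ``boundary/difference/sum'' pieces is the same information as the paper's $Q$, $R$, $S$ rearranged by the value of $m$; the final harmonic-number manipulation uses $\sum_{j=1}^m H_j=(m+1)H_m-m$ in place of the paper's Lemma~\ref{lem:harmonicsum}. What your route buys is that the $m$-sum is visibly finite (only $m\le 2n$ contribute, since $K_n^2$ has Chebyshev degree $\le 2n$ in each variable), so the interchange of sum and integral needs no analytic justification beyond this observation; what the paper's route buys is that it never appeals to the Fourier series of $\log|2\sin(t/2)|$ directly, instead deriving the needed Fourier coefficients as Lemma~\ref{lem:integralestabla}. The two are essentially dual views of the same computation.
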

\begin{proof}
Note that
\[
K_n(x,y)^2=\left(\sum_{k=0}^n\widehat C_k(x)\widehat C_k(y)\right)^2=\sum_{k,\ell=0}^n\widehat C_k(x)\widehat C_k(y)\widehat C_\ell(x)\widehat C_\ell(y),
\]
which leads us to $L_2=P_0+P+2Q+2R+S$ where:
\begin{align*}
P_0=&\log2\iint_{[-1,1]^2}K_n(x,y)^2 w(x) w(y)\,d(x,y)=(n+1)\log2,\\
P=&\iint_{[-1,1]^2}\log\frac{1}{2|x-y|} w(x) w(y)\,d(x,y)=\CJ_{0,0}=0,
\\
Q=&\frac{2}{\pi^2}\sum_{k=1}^n\iint_{[-1,1]^2}\frac{\cos(k\arccos(x))\cos(k\arccos(y))}{\sqrt{1-x^2}\sqrt{1-y^2}}\log\frac{1}{2|x-y|}\,d(x,y)\\
R=&\frac{4}{\pi^2}\sum_{k=2}^n\sum_{\ell=1}^{k-1} \iint_{[-1,1]^2}\frac{f_{k,\ell}(x,y)}{\sqrt{1-x^2}\sqrt{1-y^2}}\log\frac{1}{2|x-y|}\,d(x,y)\\
S=&\frac{4}{\pi^2}\sum_{k=1}^n \iint_{[-1,1]^2}\frac{\cos^2(k\arccos(x))\cos^2(k\arccos(y))}{\sqrt{1-x^2}\sqrt{1-y^2}}\log\frac{1}{2|x-y|}\,d(x,y)\\
=&\sum_{k=1}^n\CJ_{k,k}\stackrel{\text{Prop. \ref{prop:integral}}}{=}\frac{H_n}{4},
\end{align*}
and
\[
f_{k,\ell}(x,y)=\cos(k\arccos(x))\cos(k\arccos(y))\cos(\ell\arccos(x))\cos(\ell\arccos(y)).
\]
From Lemma \ref{lem:tecnico} we have for $n\geq1$:
\begin{multline*}
Q=\sum_{k=1}^n\frac1{\pi^2}\int_{-\pi}^{\pi}\,d \alpha \log\frac1{\sqrt{2-2\cos\alpha}}\int_{-\pi}^{\pi}\cos(k\theta)\cos(k\theta+k\alpha)\,d\theta\\
=\sum_{k=1}^n\frac{1}{\pi}\int_{-\pi}^{\pi}\cos (k\alpha) \log\frac1{\sqrt{2-2\cos\alpha}} \,d \alpha
\stackrel{\text{Lemma \ref{lem:integralestabla}}}{=}\sum_{k=1}^n\frac{1}{k}=H_n.
\end{multline*}
On the other hand, also from Lemma \ref{lem:tecnico},
\begin{multline*}
R=\sum_{k=2}^n\sum_{\ell=1}^{k-1}\frac2{\pi^2}\int_{-\pi}^{\pi}\,d \alpha \log\frac1{\sqrt{2-2\cos\alpha}}\int_{-\pi}^{\pi}\cos(k\theta)\cos(k\theta+k\alpha)\cos(\ell\theta)\cos(\ell\theta+\ell\alpha)\,d\theta\\
=\sum_{k=2}^n\sum_{\ell=1}^{k-1}\frac1{\pi}\int_{-\pi}^{\pi}\cos(k\alpha)\cos(\ell \alpha) \log\frac1{\sqrt{2-2\cos\alpha}}\,d \alpha.
\end{multline*}
We have computed these integrals in Lemma \ref{lem:integralestabla} getting:
\begin{align*}
	2R=&\sum_{k=2}^n\sum_{l=1}^{k-1}\left(\frac{1}{k-\ell}+\frac{1}{k+\ell}\right)\\
	=&\sum_{k=2}^n\left(H_{2k-1}-\frac{1}{k}\right)=1- H_n+\sum_{k=2}^nH_{2k-1}\\\stackrel{\text{Lemma \ref{lem:harmonicsum}}}=&nH_{2n-1}+\frac{H_{2n}}{2}-\frac{5H_n}{4}-n+\frac12.
\end{align*}
All in one, we have proved that
\begin{align*}
L_2=&(n+1)\log2+H_n+nH_{2n-1}+\frac{H_{2n}}{2}-n+\frac12,
\end{align*}
as wanted.
\end{proof}
The proof of Theorem \ref{th:main2} is now complete for $\lambda=0$ since we just need to write down $L_1-L_2$. The asymptotic expression for $\mathrm E(0,n+1)$ is obtained from the one for the harmonic number.

\section{Proof of Theorem  \ref{th:main2} for general $\lambda\neq0$}
We first compute $L_1$.  In this section, $Q(\lambda)$ is some constant depending only on $\lambda$, its value may change from one appareance to another and we do not care about it.
\subsection{The value of $L_1$}
In this section we prove the following result
\begin{prop}\label{prop:L1genlambda}
For any $\lambda>-1/2$ and $n\geq2$,
\[
(n+1)^2\log2\leq L_1(\lambda,n)\leq {(n+1)^2}\log2+o(n).
\]
\end{prop}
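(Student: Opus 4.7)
The plan is to diagonalize the logarithmic kernel through its Chebyshev expansion, which yields the lower bound immediately and reduces the upper bound to an estimate on Fourier-type coefficients of the one-point intensity $K_n^\lambda(x,x)w^\lambda(x)$.

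\emph{Step 1 (Diagonalization).} Starting from the Fourier series $\log|\sin\theta|=-\log 2-\sum_{k\geq 1}\cos(2k\theta)/k$ and the factorization $|\cos\theta-\cos\varphi|=2|\sin\tfrac{\theta+\varphi}{2}||\sin\tfrac{\theta-\varphi}{2}|$, one obtains the classical identity
\[
\log\frac{1}{|x-y|}=\log 2+2\sum_{k=1}^\infty\frac{T_k(x)T_k(y)}{k},\qquad x,y\in(-1,1).
\]
Inserting this in \eqref{eq:L1}, exchanging sum and integral, and using the projection-trace identity $\int_{-1}^1 K_n^\lambda(x,x)w^\lambda(x)\,dx=n+1$ gives
\[
L_1(\lambda,n)=(n+1)^2\log 2+2\sum_{k=1}^\infty\frac{b_k^2}{k},\qquad b_k:=\int_{-1}^1 T_k(x)K_n^\lambda(x,x)w^\lambda(x)\,dx.
\]
Every term in the sum is non-negative, so the lower bound $L_1(\lambda,n)\geq(n+1)^2\log 2$ is immediate.

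\emph{Step 2 (Estimating $b_k$).} Passing to the angular variable and writing $\psi_n^\lambda(\theta):=K_n^\lambda(\cos\theta,\cos\theta)w^\lambda(\cos\theta)\sin\theta$, we have $b_k=\int_0^\pi\cos(k\theta)\psi_n^\lambda(\theta)\,d\theta$. The uniform Szeg\H{o} asymptotic
\[
\widehat{C}_j^\lambda(\cos\theta)\sqrt{w^\lambda(\cos\theta)\sin\theta}=\sqrt{\tfrac{2}{\pi}}\cos\!\left((j+\lambda)\theta-\tfrac{\lambda\pi}{2}\right)+O(j^{-1}),
\]
valid on compact subsets of $(0,\pi)$, shows after squaring and summing in $j$ that
\[
\psi_n^\lambda(\theta)=\frac{n+1}{\pi}+R_n^\lambda(\theta),
\]
where $R_n^\lambda$ is a bounded oscillatory correction of size $Q(\lambda)$ whose Fourier content is essentially concentrated on frequencies $\leq 2n+O(1)$. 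The endpoint neighborhoods of $\theta=0,\pi$ are handled separately via the classical estimate $|C_j^\lambda(\cos\theta)|\lesssim j^{\lambda-1}(\sin\theta)^{-\lambda}$, contributing only $O(Q(\lambda))$ to each $b_k$. One concludes $|b_k|\leq Q(\lambda)$ uniformly in $k$, with $b_k$ rapidly decreasing for $k\gtrsim 2n$.

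\emph{Step 3 (Conclusion).} Combining the two steps,
\[
2\sum_{k=1}^\infty\frac{b_k^2}{k}\leq 2Q(\lambda)^2\sum_{k=1}^{2n+O(1)}\frac{1}{k}+(\text{negligible tail})=O(\log n)=o(n),
\]
which, added to $(n+1)^2\log 2$, yields the upper bound.

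The main technical obstacle is the uniform-in-$k$ bound on $b_k$ in Step 2: Szeg\H{o}'s asymptotic is uniform only on compact subsets of $(0,\pi)$, so the $\lambda$-dependent endpoint contributions (especially when $\lambda<1/2$, where $w^\lambda$ is singular at $\pm 1$) must be estimated separately and absorbed into $Q(\lambda)$. The $\lambda=0$ case treated in the previous section serves as a sanity check, where every object can be computed exactly and the sum equals $H_n/4$.
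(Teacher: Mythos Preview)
Your Step~1 is correct and is exactly the paper's decomposition in disguise: writing $\nu_n=K_n^\lambda(x,x)w^\lambda(x)\,dx$ and $\mu$ for the arcsine (equilibrium) measure, the paper splits $\nu_n=(n+1)\mu+\varepsilon_n$ and uses $L_1=I(\nu_n)=(n+1)^2\log 2+I(\varepsilon_n)$ (the cross term vanishes because $V^\mu\equiv\log 2$ and $\varepsilon_n$ has mass $0$). Your Chebyshev expansion is precisely the spectral form of the same identity, with $2\sum_{k\ge 1}b_k^2/k=I(\varepsilon_n)$. So the lower bound and the overall structure coincide with the paper.

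The divergence is in how $I(\varepsilon_n)$ is controlled. The paper stays in $(x,y)$-space: it combines the uniform estimate $|K_n^\lambda(x,x)w^\lambda(x)-(n+1)/(\pi\sqrt{1-x^2})|\le Q(\lambda)\log n/(1-x^2)$ with a separate endpoint bound, splits $[-1,1]^2$ into three regions, and shows each piece is $o(n)$ (in fact $O(n^{1/2}\log^3 n)$ or better). Your plan instead tries to bound the Fourier coefficients $b_k$ individually. That route can be made to work, but Step~2 as written has real gaps. First, the remainder $R_n^\lambda(\theta)=\psi_n^\lambda(\theta)-(n+1)/\pi$ is \emph{not} bounded by $Q(\lambda)$: already for $\lambda=0$ one has $\psi_n^0(\theta)=(n+1)/\pi+\pi^{-1}\sum_{j=1}^n\cos(2j\theta)$, a Dirichlet-type kernel of size $\asymp 1/\sin\theta$. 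What is true (and what you need) is that its cosine coefficients are controlled. Second, for non-integer $\lambda$ the function $\psi_n^\lambda(\theta)=c_\lambda K_n^\lambda(\cos\theta,\cos\theta)\sin^{2\lambda}\theta$ is not a trigonometric polynomial, so $b_k$ does not vanish for $k>2n$; it only decays at a power rate governed by the $\theta^{2\lambda}$-singularity at the endpoints, with an amplitude proportional to $K_n^\lambda(\pm1,\pm1)\asymp n^{2\lambda+1}$. One must check that this tail contributes $O(1)$ to $\sum b_k^2/k$, which is a genuine computation, not an afterthought. Third, the uniform bound $|b_k|\le Q(\lambda)$ is asserted but not derived from the Szeg\H{o} asymptotic you quote: the $O(j^{-1})$ error is only uniform on compact subsets of $(0,\pi)$, and summing the errors on $[n^{-1},\pi-n^{-1}]$ already costs a factor $\log n$ in $L^1$, while the endpoint neighbourhoods must be treated with the sharper bound $\psi_n^\lambda(\theta)\lesssim n^{2\lambda+1}\theta^{2\lambda}$ (this is where $\lambda\in(-1/2,0)$ needs separate care, since $\psi_n^\lambda$ is then unbounded at the endpoints).

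In short: your plan is sound at the structural level and equivalent to the paper's, but the details of Step~2 need to be tightened---either by actually proving the coefficient bounds (separating the trig-polynomial bulk from the algebraic endpoint contribution and tracking the $n$-dependence in each) or, more simply, by reverting to the paper's real-space estimate of $I(\varepsilon_n)$, which avoids the Fourier-tail bookkeeping entirely.
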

In order to prove this result, we need some simple lemmas first. Recall from \cite[Eq. (7.33.6)]{S1975} the following: for $n\geq 1$ and $\lambda\in(-1/2,\infty)$,
\begin{equation}
\left|C_n^\lambda(\cos\theta)\right|\leq\begin{cases}Q(\lambda)\sin^{-\lambda}(\theta)n^{\lambda-1}, &cn^{-1}\leq\theta\leq \pi-cn^{-1}\\Q(\lambda)n^{2\lambda-1},&0\leq\theta\leq cn^{-1}\end{cases},
\label{eq:Szegobound}
\end{equation}
for $c>0$ fixed and some constant $Q(\lambda)$. Observe that when $\lambda\geq 0$, both inequalities, though less sharp, hold in $[0,\pi]$. The following lemma follows easily:

\begin{lem}\label{lem:boundChat}
Let $\lambda>-1/2$ and $n\geq1$. Then, for $x\in[-1,1]$ such that $0\leq \arccos x\leq \sqrt{2}\,n^{-1}$ (this holds in particular if $1-n^{-2}\leq x\leq 1$), we have
\[
K_n^\lambda(x,x)w^\lambda(x) \leq
  Q(\lambda)n^{2\lambda+1}(1-x^2)^{\lambda-1/2}.
\]
\end{lem}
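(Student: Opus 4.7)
After absorbing the constant prefactor of $w^\lambda(x)$ into $Q(\lambda)$, the target reduces to the pointwise estimate $K_n^\lambda(x,x)\le Q(\lambda)\,n^{2\lambda+1}$ in the region $0\le\arccos x\le\sqrt{2}\,n^{-1}$. I would prove this by bounding the diagonal sum
\[
K_n^\lambda(x,x)=\sum_{j=0}^n\widehat{C}_j^\lambda(x)^2=\sum_{j=0}^n (\gamma_j^\lambda)^2\,C_j^\lambda(x)^2
\]
term by term, rather than through the compact Christoffel--Darboux form \eqref{Nucleox} (where the cancellation between the two products obscures the sharp power of $n$).

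Two ingredients are needed. First, from $(\gamma_j^\lambda)^2=\frac{j!\,(j+\lambda)}{\lambda\,(2\lambda)_j}$ together with the Pochhammer asymptotic $(2\lambda)_j=\Gamma(2\lambda+j)/\Gamma(2\lambda)\sim\Gamma(2\lambda)^{-1}\,j^{2\lambda-1}\,\Gamma(j+1)$, Stirling yields $(\gamma_j^\lambda)^2\le Q(\lambda)\,j^{2-2\lambda}$ uniformly for $j\ge 1$. Second, fixing $c=\sqrt{2}$ in the Szegő estimate \eqref{eq:Szegobound}, the hypothesis $\theta=\arccos x\le\sqrt{2}\,n^{-1}$ guarantees $\theta\le c\,j^{-1}$ for every $1\le j\le n$, so the small-angle branch of \eqref{eq:Szegobound} is active and yields $|C_j^\lambda(\cos\theta)|\le Q(\lambda)\,j^{2\lambda-1}$.

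Combining the two ingredients gives $\widehat{C}_j^\lambda(x)^2\le Q(\lambda)\,j^{2\lambda}$ for $1\le j\le n$ in the prescribed range of $x$; the $j=0$ summand contributes only the constant $1$. Because $\lambda>-1/2$ forces $2\lambda+1>0$, the sum is controlled by comparison with $\int_0^n t^{2\lambda}\,dt$, giving
\[
K_n^\lambda(x,x)\le 1+Q(\lambda)\sum_{j=1}^n j^{2\lambda}\le Q(\lambda)\,n^{2\lambda+1}.
\]
Multiplying by $w^\lambda(x)=\tfrac{\Gamma(\lambda+1)}{\sqrt{\pi}\,\Gamma(\lambda+1/2)}(1-x^2)^{\lambda-1/2}$ absorbs the remaining $\lambda$-dependent constant into $Q(\lambda)$ and yields the claim.

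The only delicate point is ensuring that the small-angle branch of \eqref{eq:Szegobound} is active uniformly throughout the summation range; this is precisely the reason the hypothesis is stated with the $n^{-1}$ scaling (rather than a $j^{-1}$ one), and it forces the particular choice $c=\sqrt{2}$ above. Once this uniformity is secured, no cancellation in the Christoffel--Darboux kernel and no finer properties of Gegenbauer polynomials are needed.
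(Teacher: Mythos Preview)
Your proposal is correct and follows essentially the same route as the paper: bound each summand $\widehat C_j^\lambda(x)^2$ via the normalization asymptotic $(\gamma_j^\lambda)^2\le Q(\lambda)j^{2-2\lambda}$ and the endpoint Szeg\H{o} bound $|C_j^\lambda(\cos\theta)|\le Q(\lambda)j^{2\lambda-1}$, then sum $j^{2\lambda}$ up to $n$. If anything, you are slightly more explicit than the paper in justifying why the small-angle branch of \eqref{eq:Szegobound} applies uniformly for all $1\le j\le n$ (via the choice $c=\sqrt 2$ and the inclusion $\theta\le\sqrt 2\,n^{-1}\le\sqrt 2\,j^{-1}$), a point the paper leaves implicit.
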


\begin{proof}
Recall that in the range of $x=\cos\theta$,
\[
\hat C_k^\lambda(\cos\theta)^2=\frac{k!(k+\lambda)\Gamma(2\lambda)}{\lambda\Gamma(2\lambda+k)} C_k^\lambda(\cos\theta)^2\leq \frac{Q(\lambda)}{k^{2\lambda-2}}C_k^\lambda(\cos\theta)^2
\stackrel{\text{\eqref{eq:Szegobound}}}{\leq}Q(\lambda)k^{2\lambda},
\]
where we have used some standard estimates on the Gamma function. Then,
\[
K_n^\lambda(x,x)=\sum_{k=0}^n\hat C_k^\lambda(\cos\theta)^2
\leq  Q(\lambda) \sum_{k=1}^nk^{2\lambda}.
\]
For all $\lambda\in(-1/2,\infty)$, the sum is bounded above by $Q(\lambda)n^{2\lambda+1}$, and we are done.
\end{proof}

The following is almost inmediate:
\begin{lem}\label{LemaAcotV}
Let $n>1$. There exists a constant $c>0$ such that for any $y\in[-1,1]$:
$$\int_{-1}^{1}\left|\frac{\log|x-y|}{\sqrt{1-x^2}}\right|dx\leq 3\pi\log 2,\quad \int_{-1+n^{-2}}^{1-n^{-2}}\left|\frac{\log|x-y|}{1-x^2}\right|dx\leq c\sqrt{n}.$$
If additionally we have $\lambda\in(-1/2,0)$, then
\[
\int_{1-n^{-2}}^{1}\left|\frac{\log|x-y|}{\sqrt{1-x^2}^{1-2\lambda}}\right|\,dx\leq Q(\lambda)n^{-1-2\lambda+(2+4\lambda)/(3-2\lambda)},
\]
and
\[
\int_{1-n^{-2}}^{1}\frac{1}{\sqrt{1-x^2}^{1-2\lambda}}\,dx\leq Q(\lambda)n^{-1-2\lambda}.
\]
\end{lem}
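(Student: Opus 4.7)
My plan is to handle the four inequalities in sequence; they all concern near-endpoint control of the weight, but with different levels of singularity.

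For (1) I would substitute $x=\cos\theta$ to reduce the integral to $\int_0^\pi|\log|\cos\theta-y||\,d\theta$. The key input is the classical identity $\int_0^\pi\log|\cos\theta-y|\,d\theta=-\pi\log 2$ for $y\in[-1,1]$, a consequence of the factorization $|e^{i\theta}-e^{i\phi}|\,|e^{i\theta}-e^{-i\phi}|=2|\cos\theta-\cos\phi|$ and the standard fact $\int_0^{2\pi}\log|e^{i\theta}-\zeta|\,d\theta/(2\pi)=\max(\log|\zeta|,0)$. To replace $\log$ with $|\log|$ I would use the elementary pointwise bound $|\log t|\leq 2\log 2-\log t$ on $(0,2]$ (which holds because $|\log t|+\log t=2\max(\log t,0)\leq 2\log 2$), and integrate to obtain $2\pi\log 2+\pi\log 2=3\pi\log 2$. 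For (4) the estimate $(1-x^2)^{\lambda-1/2}\leq(1-x)^{\lambda-1/2}$ on $[1-n^{-2},1]$ (since $1+x\geq 1$ and $\lambda-1/2<0$), combined with $\int_0^{n^{-2}}u^{\lambda-1/2}\,du=n^{-2\lambda-1}/(\lambda+1/2)$, does the job.

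For (2) I would factor $1/(1-x^2)=(1-x^2)^{-1/4}(1-x^2)^{-3/4}$. On $[-1+n^{-2},1-n^{-2}]$ the first factor is bounded by $\sqrt{n}$, while the remaining integral $\int_{-1}^{1}|\log|x-y||(1-x^2)^{-3/4}\,dx$ is uniformly bounded in $y$ by a single application of Hölder's inequality: $(1-x^2)^{-3/4}$ lies in $L^q$ for $q<4/3$, and $\log|\cdot-y|$ lies in every $L^p$ with norm bounded uniformly in $y$. This yields the $c\sqrt{n}$ bound. For (3) I would similarly substitute $u=1-x$ and treat the worst case $y\in[1-n^{-2},1]$: integration by parts gives $\int_0^{n^{-2}}|\log u|\,u^{\lambda-1/2}\,du=Q(\lambda)\,n^{-2\lambda-1}\log n$, which is actually stronger than the stated bound because $(2+4\lambda)/(3-2\lambda)>0$ on $(-1/2,0)$, so $\log n\leq n^{(2+4\lambda)/(3-2\lambda)}$ for $n$ large. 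When $y$ lies outside $[1-n^{-2},1]$, the log factor is essentially constant on the integration interval and the bound reduces to (4) multiplied by $O(\log n)$.

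The main obstacle is uniform control in $y$ for (3): one must check that the bounds do not degenerate as $y$ approaches $1$ or as $\lambda\to(-1/2)^+$, where $\lambda+1/2\to 0$ and the integration-by-parts constant blows up. Since $Q(\lambda)$ is allowed to depend on $\lambda$, this is notational rather than substantive. The particular exponent $(2+4\lambda)/(3-2\lambda)$ suggests the authors use a threshold-based splitting at $|x-y|=\delta$ with $\delta\sim n^{-2/(3-2\lambda)}$, balancing the contributions of $\{|x-y|>\delta\}$ (controlled by $|\log\delta|$ times the bound of (4)) and $\{|x-y|<\delta\}$ (controlled by direct integration); the integration-by-parts argument above is a simpler alternative that gives an even tighter bound modulo polylogarithmic factors.
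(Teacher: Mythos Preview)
Your arguments for (1) and (4) are essentially identical to the paper's: the paper also invokes the equilibrium potential $\int_{-1}^1\frac{\log|x-y|}{\pi\sqrt{1-x^2}}\,dx=-\log 2$ and the sign of $\log\tfrac{|x-y|}{2}$ for (1), and the elementary estimate $(1-x^2)^{\lambda-1/2}\leq Q(\lambda)(1-x)^{\lambda-1/2}$ for (4). For (2) the paper applies H\"older directly with exponents $4$ and $4/3$ on the truncated interval, whereas you first extract the sup of $(1-x^2)^{-1/4}$ and then apply H\"older to the remainder; both routes give the same $c\sqrt{n}$.

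The one genuine divergence is (3). The paper obtains the stated exponent by a single H\"older step with $q=(3-2\lambda)/(2-4\lambda)$, which is exactly the choice that produces $n^{-1-2\lambda+(2+4\lambda)/(3-2\lambda)}$ and, crucially, is automatically uniform in $y$ because the $L^p$ norm of $\log|\cdot-y|$ on $[-1,1]$ is bounded independently of $y$. Your direct integration-by-parts argument, if completed, yields the sharper bound $Q(\lambda)n^{-1-2\lambda}\log n$, but the sketch as written only treats $y=1$ and $y\notin[1-n^{-2},1]$; the intermediate case $y\in(1-n^{-2},1)$ (and also $y$ just below $1-n^{-2}$, where your ``essentially constant'' claim fails) still needs a short argument. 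This is fixable---one can split at $|x-y|\gtrless n^{-2}$ and bound each piece---but the H\"older route avoids the case analysis entirely at the cost of the cruder exponent. Your final paragraph correctly guesses that the paper's odd-looking exponent comes from such a balancing/H\"older choice.
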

\begin{proof}
Recall that
\[
\int_{-1}^1\frac{1}{\pi\sqrt{1-x^2}}dx=1,\quad \int_{-1}^1\frac{\log|x-y|}{\pi\sqrt{1-x^2}}dx=-\log 2,\quad\forall y\in[-1,1].
\]
This last integral is consequence of the potential of the equilibrium measure, also known as Robin constant. Since for any $x,y\in[-1,1]$ we have $\log\frac{|x-y|}{2}\leq0$, we conclude
\begin{align*}
\int_{-1}^1\left|\frac{\log|x-y|}{\pi\sqrt{1-x^2}}\right|dx=&
\int_{-1}^1\left|\frac{\log2+\log\frac{|x-y|}{2}}{\pi\sqrt{1-x^2}}\right|dx\\
\leq & \int_{-1}^1\frac{\log2}{\pi\sqrt{1-x^2}}dx-\int_{-1}^1\frac{\log\frac{|x-y|}{2}}{\pi\sqrt{1-x^2}}dx
\\
= & 2\int_{-1}^1\frac{\log2}{\pi\sqrt{1-x^2}}dx-\int_{-1}^1\frac{\log|x-y|}{\pi\sqrt{1-x^2}}dx = 3\log 2.
\end{align*}
The first claim of the lemma follows. For the second one, let $J$ be the integral we want to estimate. From Holder's inequality,
\[
J\leq \left(\int_{-1+n^{-2}}^{1-n^{-2}}\left|\log|x-y|\right|^4dx\right)^{1/4}\left(\int_{-1+n^{-2}}^{1-n^{-2}}\frac{1}{(1-x^2)^{4/3}}dx\right)^{3/4}.
\]
The first of these two integrals is bounded above by some universal constant. The second one is at most
\[
\left(2\int_{-1+n^{-2}}^0\frac{1}{(1+x)^{4/3} }dx\right)^{3/4}\leq c \sqrt{n}.
\]

For the third integral in the lemma (that we denote $I(\lambda,n)$), again Holder's inequality yields the upper bound (independent of $y$):
\begin{multline*}
I(\lambda,n)\leq Q(q,\lambda)\left(\int_{1-n^{-2}}^{1}\frac{1}{\sqrt{1-x^2}^{(1-2\lambda)q}}\,dx\right)^{1/q} \\
\leq Q(q,\lambda)\left(\int_{1-n^{-2}}^{1}\frac{1}{(1-x)^{(1/2-\lambda)q}}\,dx\right)^{1/q}\leq Q(q,\lambda)n^{1-2\lambda-2/q},
\end{multline*}
where $q>1$ is any positive number such that $(1-2\lambda)q<2$ and $Q(q,\lambda)$ plays the same role as $Q(\lambda)$ but in this case it may depend on $q$. Indeed we choose $q=(3-2\lambda)/(2-4\lambda)$ and the third inequality follows. The last one is even more elementary:
\[
\int_{1-n^{-2}}^{1}\frac{1}{\sqrt{1-x^2}^{1-2\lambda}}\,dx\leq Q(\lambda) \int_{1-n^{-2}}^{1}\frac{1}{(1-x)^{1/2-\lambda}}\,dx\leq Q(\lambda)n^{-1-2\lambda}.
\]
\end{proof}

\begin{lem}\label{lem:boundC}
For $\lambda>-1/2$, $n\geq 1$ and $\theta\in[0,\pi]$
\begin{equation}\label{AsGegenbauerSzego2}
\left|\widehat{C}_n^\lambda(\cos\theta)\sqrt{w^\lambda(\cos(\theta))}
-\frac{\sqrt{2}}{\sqrt{\pi}\sqrt{\sin\theta}}\cos\left((n+\lambda)\theta-\lambda \pi/2\right)\right|
\leq \frac{Q(\lambda)}{n\sin^{3/2}\theta}\min(1,T)\,,
\end{equation}
where $Q(\lambda)$ is some constant depending only on $\lambda$ and
\[
T=T(n,\theta)=\begin{cases}
    n\sin\theta, & \mbox{if } \lambda\geq0 \\
    (n\sin\theta)^{\lambda+1}, & \mbox{otherwise}.
  \end{cases}
\]
One can change $\min(1,T)$ to $T/(T+1)$ if desired.

\end{lem}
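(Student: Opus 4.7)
The plan is to derive this as a refinement of the classical Szegő asymptotic expansion for ultraspherical polynomials, combined with a boundary-layer analysis near $\theta=0$ and $\theta=\pi$. The $\min(1,T)$ structure reflects exactly this dichotomy: the constant~$1$ branch corresponds to the interior regime in which Szegő's asymptotic is sharp, while the $T$ branch corresponds to the boundary regime in which both the exact value and the model term must be estimated independently by crude pointwise bounds.

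First, for the interior range $cn^{-1}\leq\theta\leq\pi-cn^{-1}$, I would start from the Szegő asymptotic formula \cite{S1975}, which gives
$$C_n^\lambda(\cos\theta)=\frac{2\,\Gamma(n+\lambda)}{\Gamma(\lambda)\,\Gamma(n+1)}\cdot\frac{\cos\!\left((n+\lambda)\theta-\lambda\pi/2\right)}{(2\sin\theta)^\lambda}+R_n^\lambda(\theta),$$
with an explicit remainder $R_n^\lambda(\theta)$ bounded by $Q(\lambda)\,\Gamma(n+\lambda)/(\Gamma(\lambda)\Gamma(n+1))\cdot n^{-1}(\sin\theta)^{-\lambda-1}$. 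Multiplying by $\gamma_n^\lambda\sqrt{w^\lambda(\cos\theta)}$ and applying Stirling to the Gamma function ratios hidden in $\gamma_n^\lambda=\sqrt{n!(n+\lambda)\Gamma(2\lambda)/(\lambda\Gamma(n+2\lambda))}$, the leading coefficient collapses (up to a factor $1+O(1/n)$) to the constant $\sqrt{2/\pi}$, and the powers of $\sin\theta$ from the weight cancel the factor $(2\sin\theta)^{-\lambda}$, producing the main term $\sqrt{2/(\pi\sin\theta)}\cos((n+\lambda)\theta-\lambda\pi/2)$ with an error of order $Q(\lambda)/(n\sin^{3/2}\theta)$. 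This already yields the claimed inequality when $\min(1,T)=1$.

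Next, for the boundary ranges $\theta\in[0,cn^{-1}]$ and $\theta\in[\pi-cn^{-1},\pi]$, Szegő's interior expansion is no longer sharp, and I would instead bound the two sides of the inequality separately. Using the pointwise estimate \eqref{eq:Szegobound}, the fact that $\gamma_n^\lambda$ is of order $n^{1-\lambda}$, and the behavior $\sqrt{w^\lambda(\cos\theta)}\sim c(\lambda)(\sin\theta)^\lambda$, I obtain
$$\left|\widehat{C}_n^\lambda(\cos\theta)\sqrt{w^\lambda(\cos\theta)}\right|\leq Q(\lambda)\,\frac{(n\sin\theta)^{\lambda}}{\sqrt{\sin\theta}}\quad(\lambda<0),$$
with an analogous bound (with exponent $1$ in place of $\lambda$) when $\lambda\geq 0$, since in that regime the Szegő bound holds uniformly in $\theta\in[0,\pi]$. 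The model term $\sqrt{2/(\pi\sin\theta)}\cos((n+\lambda)\theta-\lambda\pi/2)$ is itself dominated by $Q/\sqrt{\sin\theta}$, so subtracting yields a difference of size at most $Q(\lambda)\min(1,T)/(n\sin^{3/2}\theta)$ in this regime, with $T=(n\sin\theta)^{\lambda+1}$ or $T=n\sin\theta$ according to the sign of $\lambda$.

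The main obstacle will be keeping the constants uniform in the negative-$\lambda$ case, where the weight $\sqrt{w^\lambda}$ blows up near the endpoints and the sharper of the two bounds in \eqref{eq:Szegobound} must be applied delicately to recover the precise power $(n\sin\theta)^{\lambda+1}$ appearing in $T$. A secondary issue is the smooth matching of the two regimes at the seam $\theta\sim cn^{-1}$, for which one must verify that the piecewise estimate can be packaged as a single bound of the stated form; this follows from monotonicity of $\min(1,T)$ in $T$ together with the fact that the interior and boundary bounds agree up to a universal constant at the common boundary, which can then be absorbed into $Q(\lambda)$. The alternative formulation with $T/(T+1)$ in place of $\min(1,T)$ is immediate since these two quantities agree up to a factor of $2$.
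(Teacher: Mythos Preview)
Your proposal is correct and follows essentially the same approach as the paper: split into the interior regime $cn^{-1}\le\theta\le\pi-cn^{-1}$ (where Szeg\H{o}'s asymptotic \cite[(8.21.18)]{S1975} plus Stirling on $\gamma_n^\lambda$ yields the main term with error $Q(\lambda)/(n\sin^{3/2}\theta)$), and the boundary regime (where both $\widehat C_n^\lambda\sqrt{w^\lambda}$ and the model term are bounded separately via \eqref{eq:Szegobound}, producing the extra factor $T$). The paper carries out exactly this dichotomy and obtains the combined bound $(n\sin\theta)^{\lambda+1}+n\sin\theta$ in the boundary layer, which is then dominated by $T$ according to the sign of $\lambda$, matching your outline.
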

\begin{proof}
The classical asymptotic results for Gegenbauer polynomials \cite[Eq. (8.21.18)]{S1975} yields:
\[
\left|C_n^\lambda(\cos\theta)-\frac{2^\lambda S_n^\lambda}{\sqrt{\pi n}\sin^\lambda\theta}\cos\left((n+\lambda)\theta-\lambda \pi/2\right)\right|\leq\frac{|S_n^\lambda| Q(\lambda)}{n^{3/2}\sin^{\lambda+1}\theta}\leq \frac{Q(\lambda)n^{\lambda-2}}{\sin^{\lambda+1}\theta},
\]
valid for $\theta\in[n^{-1},\pi-n^{-1}]$, where
\[
S_n^\lambda=\frac{\Gamma(n+2\lambda)\Gamma(\lambda+1/2)}{\Gamma(2\lambda)\Gamma(\lambda+n+1/2)}\text{ satisfies }|S_n^\lambda|\leq Q(\lambda) n^{\lambda-1/2}.
\]
Then, for the orthonormal polynomials we have
\[
\left|\widehat{C}_n^\lambda(\cos\theta)-\frac{\gamma_n^\lambda 2^\lambda S_n^\lambda}{\sqrt{\pi n}\sin^\lambda\theta}\cos\left((n+\lambda)\theta-\lambda \pi/2\right)\right|
\leq\frac{\gamma_n^\lambda Q(\lambda)n^{\lambda-2}}{\sin^{\lambda+1}\theta}
\leq\frac{Q(\lambda)}{n\sin^{\lambda+1}\theta}\,.
\]
Using
$$\left|\frac{\gamma_n^\lambda 2^\lambda S_n^\lambda}{\sqrt{\pi n}}-\sqrt{\frac{2\Gamma(\lambda+1/2)}{\sqrt{\pi}\Gamma(\lambda+1)}}\right|\leq \frac{Q(\lambda)}{n}\,,$$
and multiplying by $\sqrt{w^\lambda}$ we get
\[
\left|\widehat{C}_n^\lambda(\cos\theta)\sqrt{w^\lambda(\cos(\theta))}
-\frac{\sqrt{2}}{\sqrt{\pi}\sqrt{\sin\theta}}\cos\left((n+\lambda)\theta-\lambda \pi/2\right)\right|
\leq\frac{Q(\lambda)}{n\sin^{3/2}\theta}+\frac{Q(\lambda)}{n\sqrt{\sin\theta}}\,,
\]
so \eqref{AsGegenbauerSzego2} is proved for $\theta\in[n^{-1},\pi-n^{-1}]$.

Now if $\theta\in[0,n^{-1}]\cup[\pi-n^{-1},\pi]$, using \eqref{eq:Szegobound} we obtain
\begin{align*}
n\sin^{3/2}(\theta)
&\left|\widehat{C}_n^\lambda(\cos\theta)\sqrt{w^\lambda(\cos(\theta))}
-\frac{\sqrt{2}}{\sqrt{\pi}\sqrt{\sin\theta}}\cos\left((n+\lambda)\theta-\lambda \pi/2\right)\right|\\
\leq &n\sin^{3/2}(\theta)Q(\lambda)\left(\gamma_n^\lambda n^{2\lambda-1} \sin^{\lambda-1/2}(\theta)+\frac{1}{\sqrt{\sin\theta}}\right)\\
\leq &n\sin^{3/2}(\theta)Q(\lambda)\left(n^{\lambda} \sin^{\lambda-1/2}(\theta)+\frac{1}{\sqrt{\sin\theta}}\right)\\
\leq &Q(\lambda)\left( (n \sin(\theta))^{\lambda+1}+n\sin\theta\right),
\end{align*}
which is bounded above by $n\sin\theta$ if $\lambda\geq0$ and by $(n\sin\theta)^{\lambda+1}$ otherwise, so \eqref{AsGegenbauerSzego2} is valid for $\theta\in[0,\pi]$.
\end{proof}

\begin{lem}\label{lem:boundK}
Let $\lambda>-1/2$ and let $n\geq 2$, then
\[
\left|K_{n}^\lambda(x,x)w^\lambda(x)-\frac{n+1}{\pi\sqrt{1-x^2}}\right|\leq \frac{Q(\lambda)\log n}{1-x^2}
,\quad x\in[-1,1]
\]
where $Q(\lambda)$ is some constant depending only on $\lambda$.
\end{lem}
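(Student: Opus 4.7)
The plan is to expand $K_n^\lambda(x,x)w^\lambda(x) = \sum_{k=0}^n b_k(\theta)^2$, where $\theta = \arccos x$ and $b_k(\theta) := \widehat{C}_k^\lambda(\cos\theta)\sqrt{w^\lambda(\cos\theta)}$, and then insert the asymptotic approximation from Lemma \ref{lem:boundC}. Setting $a_k(\theta) := \sqrt{2/(\pi\sin\theta)}\cos((k+\lambda)\theta - \lambda\pi/2)$, that lemma yields $b_k = a_k + e_k$ for $k \geq 1$, with $|e_k| \leq Q(\lambda)\min(1,T_k)/(k\sin^{3/2}\theta)$, where $T_k = k\sin\theta$ when $\lambda \geq 0$ and $T_k = (k\sin\theta)^{\lambda+1}$ otherwise. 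Expanding the square gives
\[
K_n^\lambda(x,x)w^\lambda(x) = b_0^2 + \sum_{k=1}^n a_k^2 + 2\sum_{k=1}^n a_k e_k + \sum_{k=1}^n e_k^2,
\]
and the target is to match the first sum against $(n+1)/(\pi\sin\theta)$ while showing that every other contribution is $O(\log n/(1-x^2))$.

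First I would handle the main sum. Using $2\cos^2 u = 1 + \cos(2u)$ and summing the geometric series in $e^{2ik\theta}$, the sum $\sum_{k=1}^n \cos(2(k+\lambda)\theta - \lambda\pi)$ is bounded by $1/|\sin\theta|$, so
\[
\sum_{k=1}^n a_k^2 = \frac{n}{\pi\sin\theta} + O\!\left(\frac{1}{\sin^2\theta}\right).
\]
The boundary term $b_0^2 = w^\lambda(\cos\theta) \leq Q(\lambda)(1-x^2)^{\lambda-1/2}$ is at most $Q(\lambda)/(1-x^2)$ since $\lambda + 1/2 > 0$, and $a_0^2 = O(1/\sin\theta)$ is even smaller. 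These combine with the $O(1/\sin^2\theta)$ remainder to absorb the discrepancy between $n$ and $n+1$ in the main term.

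Next I would estimate the two error sums by splitting the index range at $k_\ast \simeq 1/\sin\theta$: for $k \leq k_\ast$ use $\min(1,T_k) = T_k$, and for $k > k_\ast$ use $\min(1,T_k) = 1$. A direct calculation, handled separately in the cases $\lambda \geq 0$ and $-1/2 < \lambda < 0$, gives $\sum_{k=1}^n e_k^2 \leq Q(\lambda)/\sin^2\theta$. For the cross sum, bounding $|a_k| \leq \sqrt{2/(\pi\sin\theta)}$ reduces the problem to estimating $\sum_{k=1}^n \min(1,T_k)/k$, whose small-$k$ piece contributes $O(1)$ and whose large-$k$ piece is the harmonic tail $\sum_{k_\ast < k \leq n} 1/k = O(\log n)$. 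Hence $\sum_{k=1}^n |a_k e_k| \leq Q(\lambda)\log n/\sin^2\theta$. Collecting everything and using $\sin^2\theta = 1-x^2$ completes the proof.

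The hard part will be the cross term: a naive Cauchy--Schwarz estimate gives only $O(\sqrt{n}/\sin^{3/2}\theta)$, which is weaker than the claimed bound when $\sin\theta$ is small. Exploiting the $1/k$ factor in $e_k$ directly is what limits the loss to $\log n$, and the splitting argument must be carried out uniformly in both regimes of $\lambda$, where the exponent on $T_k$ differs. The extreme regime $\sin\theta < 1/n$ (where $k_\ast > n$ so only the small-$k$ branch appears) also needs a separate sanity check; there one uses $n\sin\theta \leq 1 \leq \log n$ to convert the bound $Q(\lambda)n/\sin\theta$ into the desired $Q(\lambda)\log n/\sin^2\theta$.
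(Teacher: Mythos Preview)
Your proposal is correct and follows essentially the same route as the paper: write $b_k=a_k+e_k$ via Lemma~\ref{lem:boundC}, use the geometric-series bound on $\sum\cos(2(k+\lambda)\theta-\lambda\pi)$ for the main term (the paper packages this as Lemma~\ref{lem:trigb}), and control the remaining sums $\sum e_k^2$ and $\sum a_k e_k$ by $Q(\lambda)\log n/\sin^2\theta$. The only cosmetic difference is that the paper keeps the factor $T/(T+1)$ and compares the resulting sums to integrals, whereas you use $\min(1,T)$ and split at $k_\ast\simeq 1/\sin\theta$; the two devices are equivalent and lead to the same estimates.
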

\begin{proof}
For $\theta=\arccos(x)\in[0,\pi]$ and $k\geq 1$,  by Lemma \ref{lem:boundC}
\begin{align*}
&\left|\widehat{C}_k^\lambda(\cos\theta)^2w^\lambda(\cos\theta)
-\frac{2}{\pi\sin\theta}\cos^2((k+\lambda)\theta-\lambda\pi/2)\right|\\
\leq&\frac{Q(\lambda)}{k\sin^{3/2}\theta}\frac{T(k,\theta)}{T(k,\theta)+1}
\left(\left|\widehat{C}_k^\lambda(\cos\theta)\right|\sqrt{w^\lambda(\cos(\theta))}+
\frac{\sqrt{2}}{\sqrt{\pi}\sqrt{\sin\theta}}\right)\\
\leq& \frac{Q(\lambda)}{k\sin^{3/2}\theta}\frac{T(k,\theta)}{ T(k,\theta)+1}\left(\frac{Q(\lambda)}{k\sin^{3/2}\theta}\frac{T(k,\theta)}{T(k,\theta)+1} +\frac{1}{\sqrt{\sin\theta}}\right)\\
\leq &\frac{Q(\lambda)}{(k+1)^2\sin^{3}\theta}\left(\frac{T(k,\theta)}{T(k,\theta)+1}\right)^2
+\frac{Q(\lambda)}{(k+1)\sin^{2}\theta}\left(\frac{T(k,\theta)}{T(k,\theta)+1}\right)\,,
\end{align*}
where we have used Lemma \ref{lem:boundC}  and that the normalization coefficient $\gamma_k^\lambda$ behaves like $k^{1-\lambda}$. Observe that this inequality is also valid for $k=0$. Then, from Lemma \ref{lem:trigb} we conclude
\begin{multline*}
K_n^\lambda(x,x)w^\lambda(x)=\sum_{k=0}^n\widehat{ C}_k^\lambda(x)^2w^\lambda(x)\le\\
\frac{n+1}{\pi\sin\theta}+ \sum_{k=0}^n\frac{Q(\lambda)}{(k+1)^2\sin^{3}\theta}\left(\frac{T(k,\theta)}{T(k,\theta)+1}\right)^2
+\sum_{k=0}^n\frac{Q(\lambda)}{(k+1)\sin^{2}\theta}\left(\frac{T(k,\theta)}{T(k,\theta)+1}\right).
\end{multline*}
If $\lambda\geq0$ we can bound the last sums by
\begin{align*}
\frac{Q(\lambda)}{\sin\theta}\sum_{k=0}^n\frac{1}{(1+k\sin\theta)^2}\leq & \frac{Q(\lambda)}{\sin\theta}\int_{ 0}^n\frac{1}{(1+x\sin\theta)^2}\,dx\leq \frac{Q(\lambda)}{\sin\theta}\int_0^n\frac{1}{1+x\sin\theta}\,dx,\\
\frac{Q(\lambda)}{\sin\theta}\sum_{k=0}^n\frac{1}{1+k\sin\theta}\leq&
    \frac{Q(\lambda)}{\sin\theta}\int_0^n\frac{1}{1+x\sin\theta}\,dx\leq\frac{Q(\lambda)\log (1+n\sin\theta)}{\sin^2\theta}\\\leq&\frac{Q(\lambda)\log n}{\sin^2\theta}.
\end{align*}
If $\lambda<0$ the corresponding bounds are
\begin{align*}
\frac{Q(\lambda)}{\sin^{1-2\lambda}\theta}\sum_{k=0}^n\frac{k^{2\lambda+2}}{(k+1)^2(1+k^{\lambda+1}\sin^{\lambda+1}\theta)^2} \leq& \frac{Q(\lambda)}{\sin^{1-2\lambda}\theta}\int_{0}^n\frac{x^{2\lambda}}{(1+x^{\lambda+1}\sin^{\lambda+1}\theta)^2}\,dx \\
\leq&
\frac{Q(\lambda)}{\sin^{2}\theta},
\end{align*}
(the last by dividing the integration interval with midpoint $1/\sin\theta$ if $n$ is greater than this quantity), and
\begin{align*}
\frac{Q(\lambda)}{\sin^{1-\lambda}\theta}\sum_{k=0}^n\frac{k^{\lambda+1}}{(k+1)(1+k^{\lambda+1}\sin^{\lambda+1}\theta)} \leq& \frac{Q(\lambda)}{\sin^{1-\lambda}\theta}\int_0^n\frac{x^{\lambda}}{(1+x^{\lambda+1}\sin^{\lambda+1}\theta)}\,dx \\=&
\frac{Q(\lambda)\log (1+(n\sin\theta)^{\lambda+1})}{\sin^2\theta}
\\
\leq&\frac{Q(\lambda)\log n}{\sin^2\theta},
\end{align*}

as we wanted. The reciprocal inequality
\[
K_n^\lambda(x,x)w^\lambda(x)\geq\frac{n+1}{\pi\sin\theta}-\frac{Q(\lambda)\log n}{\sin^{2}\theta}
\]
is proved the same way (now, the error bounds have a minus sign).
\end{proof}

\subsubsection{Proof of Proposition \ref{prop:L1genlambda}}
The integral $L_1$ is indeed the energy of a measure, so we can make use of the terminology related with this topic. The mutual energy of a pair of (possibly signed) measures $\mu$ and $\nu$ is given by
$$I(\mu,\nu)=\iint\log\frac{1}{|x-y|}d\mu(x)d\nu(y)\,,$$
where the integral is assumed to exist, and the energy of a measure $\mu$ is $I(\mu,\mu)$ which is usually denoted simply by $I(\mu)$. The potential of a measure $\mu$ is defined by
$$V^\mu(x)=\int\log\frac{1}{|x-y|}d\mu(y)\,.$$
Thus we have the following obvious relations
$$I(\mu,\nu)=\int V^\mu(x)d\nu(x)=\int V^\nu(x)d\mu(x)\,, \qquad I(\mu)=\int V^\mu(x)d\mu(x)\,.$$

The following well known lemma describes how is the energy of linear transformations of measures (possibly signed).
\begin{lem}\label{lem:linearitymeasures}The following identities hold
$$I(\mu+\nu)=I(\mu)+2I(\mu,\nu)+I(\nu)\,,\qquad I(t \mu)=t^2I(\mu)\,,$$
where $t\in\mathbb{R}$.
\end{lem}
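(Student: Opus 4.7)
The plan is to unfold the definitions of $I(\mu)$ and $I(\mu,\nu)$ as (iterated) integrals with respect to product measures on $\R\times\R$ (or the relevant compact set), and then apply bilinearity of the pairing $(\alpha,\beta)\mapsto \iint\log\frac{1}{|x-y|}\,d\alpha(x)\,d\beta(y)$ together with symmetry of the logarithmic kernel in $x$ and $y$.

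First I would write
\[
I(\mu+\nu)=\iint\log\frac{1}{|x-y|}\,d(\mu+\nu)(x)\,d(\mu+\nu)(y),
\]
and expand the product measure $d(\mu+\nu)\otimes d(\mu+\nu)$ as
\[
d\mu\otimes d\mu+d\mu\otimes d\nu+d\nu\otimes d\mu+d\nu\otimes d\nu.
\]
Interchanging the order of summation and integration (justified by the implicit integrability hypothesis that appears in the definition of $I(\mu,\nu)$ right above the lemma) one obtains
\[
I(\mu+\nu)=I(\mu)+I(\mu,\nu)+I(\nu,\mu)+I(\nu).
\]
Since the kernel $k(x,y)=\log\frac{1}{|x-y|}$ satisfies $k(x,y)=k(y,x)$, Fubini's theorem gives $I(\mu,\nu)=I(\nu,\mu)$, yielding the first claimed identity.

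For the second identity, $t\mu$ denotes the signed measure with $d(t\mu)=t\,d\mu$, so the product measure satisfies $d(t\mu)(x)\,d(t\mu)(y)=t^2\,d\mu(x)\,d\mu(y)$, and pulling the scalar $t^2$ out of the integral produces $I(t\mu)=t^2 I(\mu)$.

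There is no real obstacle here: the only subtlety is to be slightly careful that the integrals used in the expansion are well-defined, but this is exactly the standing hypothesis introduced when $I(\mu,\nu)$ was defined, and $\log\frac{1}{|x-y|}$ is lower semicontinuous and bounded below on any compact subset of $\R$ minus the diagonal by an integrable function (indeed bounded below by a constant on $[-1,1]\times[-1,1]$ after adding $\log 2$), so Fubini applies to each term separately. The whole argument is essentially the observation that $I$ is the quadratic form associated with the symmetric bilinear form $I(\cdot,\cdot)$.
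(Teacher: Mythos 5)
Your proof is correct: expanding the product measure bilinearly, combining the two cross terms via the symmetry of the kernel $\log\frac{1}{|x-y|}$ (with Fubini and the standing integrability assumption justifying the manipulations for signed measures), and pulling out $t^2$ for the scaling identity is exactly the standard argument. The paper itself offers no proof — it states the lemma as "well known" — so your write-up simply supplies the routine verification the authors omitted, and it does so correctly.
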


The equilibrium measure in $[-1,1]$, which from now on we denote by $\mu$, is the unique minimizer of the energy among all the measures of total mass $1$ and also is the unique (unitary) measure such that its potential is constant,  $V^{\mu}(x)=\log(2)$ for $x\in[-1,1]$. Its density is
\begin{equation}\label{MedEq}
\frac{d\mu(x)}{dx}=\frac{1}{\pi\sqrt{1-x^2}}\,.
\end{equation}
In other words, for any other probability measure $d\nu$, the logarithmic energy given by
\[
\iint_{x,y\in[-1,1]}\log\frac{1}{|x-y|}\,d\nu(x)\,d\nu(y)
\]
is greater than $\log2$. Now, since $(n+1)^{-1}K_n^\lambda(x,x)w^\lambda(x)$ is a probability measure, the lower bound of Proposition \ref{prop:L1genlambda} follows immediately since $I((n+1)\mu)=(n+1)^2\log(2)$. For the upper bound we are going to consider the equilibrium measure $\mu$, the measures $\nu_n$ such that
$$\frac{d\nu_n}{dx}=K_n^\lambda(x)w^\lambda(x)\,,$$
and $\varepsilon_n=\nu_n-(n+1)\mu$. Both $\nu_n$ and $(n+1)\mu$ are positive measures of total mass $(n+1)$, hence $\varepsilon_n$ is a signed measure of total mass $0$.

The first step is to use the decomposition $\nu_n=(n+1)\mu+\varepsilon_n$. Thus
$$L_1(n,\lambda)=I(\nu_n)=I\left((n+1)\mu+\varepsilon_n\right)
=I\left((n+1)\mu\right)+2I((n+1)\mu,\varepsilon_n)+I(\varepsilon_n)\,.$$
The mutual energy of $(n+1)\mu$ and $\varepsilon_n$ vanishes since
\begin{align*}
I((n+1)\mu,\varepsilon_n)&=(n+1)I(\mu,\varepsilon_n)=(n+1)\int V^{\mu}(x)d\varepsilon_n(x)
=(n+1)\log(2)\int d\varepsilon_n(x)=0\,.
\end{align*}
The consequence is that we can write
$$L_1(n,\lambda)=(n+1)^2\log(2)+I(\varepsilon_n)\,,$$
and then we have to prove that $I(\varepsilon_n)=o(n)$. Now, from lemmas \ref{lem:boundChat} and \ref{lem:boundK} we have that
\[
|d\varepsilon_n(x)|\leq\begin{cases}\frac{Q(\lambda)\log n}{1-x^2},&x\in[-1+n^{-2},1-n^{-2}],
\\[3pt]
\frac{Q(\lambda)n^{2\lambda+1}(1-x^2)^{\lambda}}{\sqrt{1-x^2}}+\frac{Q(\lambda)n}{\sqrt{1-x^2}}\leq \frac{Q(\lambda)n}{\sqrt{1-x^2}},&x\in[1-n^{-2},1],\;\lambda\geq0
\\[3pt]
\frac{Q(\lambda)n^{2\lambda+1}(1-x^2)^{\lambda}}{\sqrt{1-x^2}}+\frac{Q(\lambda)n}{\sqrt{1-x^2}}\leq \frac{Q(\lambda)n^{2\lambda+1}}{\sqrt{1-x^2}^{1-2\lambda}},&x\in[1-n^{-2},1],\;\lambda<0
\end{cases}
\]
and using a simple symmetry argument we have
\[I(\varepsilon_n)\leq 4A_n+4B_n+C_n
\]
where the definition of $A_n,B_n$ depends on the cases $\lambda\geq0$ and $\lambda<0$. For the first one:
\begin{align*}
A_n=&\iint_{x,y\in[1-n^{-2},1]}\left|\log\frac{1}{|x-y|}\right|\,\frac{Q(\lambda)n^2}{\sqrt{1-x^2}\sqrt{1-y^2}}\,dx\,dy,
\\
B_n=&\iint_{x\in[-1+n^{-2},1-n^{-2}],y\in[1-n^{-2},1]}\left|\log\frac{1}{|x-y|}\right| \frac{Q(\lambda)n\log n}{(1-x^2)\sqrt{1-y^2}}\,dx\,dy,
\\
C_n=&\iint_{x,y\in[-1+n^{-2},1-n^{-2}]}\left|\log\frac{1}{|x-y|}\right|\,\frac{Q(\lambda)(\log n)^2}{(1-x^2)(1-y^2)}\,dx\,dy\\\leq &O(\sqrt{n}\log(n)^3),
\end{align*}
the last from Lemma \ref{LemaAcotV} and some little arithmetic. Also from Lemma \ref{LemaAcotV} we have
\[
B_n\leq Q(\lambda)n^{3/2}\log n\int_{1-n^{-2}}^1\frac{1}{\sqrt{1-y^2}}\,dy \; \leq\;  Q(\lambda)n^{3/2}\log n\sqrt{1-(1-n^{-2})^2}\leq Q(\lambda)n^{1/2}\log n.
\]
Finally, the change of variables $x=1-t$, $y=1-s$ gives
\begin{align*}
A_n\leq&Q(\lambda)n^2\int_{0}^{n^{-2}}dt\int_{0}^t\frac{1}{\sqrt{2t-t^2}\sqrt{2s-s^2}}\log\frac{1}{t-s}\,ds\\
\leq&Q(\lambda)n^2\int_{0}^{n^{-2}}dt\int_{0}^t\frac{1}{\sqrt{ts}}\log\frac{1}{t-s}\,ds\\
=&-Q(\lambda)^2n^2\int_{0}^{n^{-2}}2(\log t+2\log 2-2)dt=O(\log n).\end{align*}
In the case $\lambda\in(-1/2,0)$ the definitions of $A_n$ and $B_n$ are slightly different but they satisfy similar bounds: $A_n$ equals
\[
\iint_{{x,y\in[1-n^{-2},1]}}\left|\log\frac{1}{|x-y|}\right|\,\frac{Q(\lambda)n^{4\lambda+2}}{\sqrt{1-x^2}^{1-2\lambda}\sqrt{1-y^2}^{1-2\lambda}}\,dx\,dy\leq  Q(\lambda) n^{(2+4\lambda)/(3-2\lambda)}=o(n),
\]
where we have used Lemma \ref{LemaAcotV}. Finally, the definition of $B_n$ if $\lambda\in(-1/2,0)$ is
\[
\iint_{x\in[-1+n^{-2},1-n^{-2}] ,y\in[1-n^{-2},1]}\left|\log\frac{1}{|x-y|}\right|\,\frac{Q(\lambda)n^{2\lambda+1}\log n}{(1-x^2)\sqrt{1-y^2}^{1-2\lambda}}\,dx\,dy=O(\sqrt{n}\log n),
\]
again from Lemma \ref{LemaAcotV}.

\subsection{The value of $L_2$}

We devote this section to prove the following result.
\begin{prop}\label{propL2}
For any $\lambda>-1/2$, and as $n\to\infty$
$$L_2(\lambda,n)=n\log(n)+(\gamma+2\log(2)-1)n+o(n)\,.$$
\end{prop}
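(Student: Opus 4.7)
The plan is to compare $L_2(\lambda,n)$ to its exact Chebyshev counterpart using the asymptotic expansion of the orthonormal Gegenbauer polynomials provided by Lemma~\ref{lem:boundC}. After the change of variables $x=\cos\theta$, $y=\cos\varphi$, $L_2$ becomes
$$L_2(\lambda,n)=\iint_{[0,\pi]^2}\Bigl(\sum_{k=0}^nG_k(\theta)G_k(\varphi)\Bigr)^2\log\frac{1}{|\cos\theta-\cos\varphi|}\,d\theta\,d\varphi,$$
where $G_k(\theta)=\widehat{C}_k^\lambda(\cos\theta)\sqrt{w^\lambda(\cos\theta)\sin\theta}$. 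Lemma~\ref{lem:boundC} shows that $G_k(\theta)$ is uniformly well approximated by $\tilde G_k(\theta):=\sqrt{2/\pi}\,\cos((k+\lambda)\theta-\lambda\pi/2)$, which coincides with the corresponding function from the Chebyshev case up to a frequency and phase shift.

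The next step is to write $K:=\sum_kG_k(\theta)G_k(\varphi)=\tilde K+(K-\tilde K)$ with $\tilde K:=\sum_k\tilde G_k(\theta)\tilde G_k(\varphi)$, and expand
$$L_2=\iint\tilde K^2\log\frac{1}{|\cos\theta-\cos\varphi|}\,d\theta\,d\varphi+\iint(K-\tilde K)(K+\tilde K)\log\frac{1}{|\cos\theta-\cos\varphi|}\,d\theta\,d\varphi.$$
For the error integral, I would control $K+\tilde K$ pointwise by Lemma~\ref{lem:boundK} in the bulk and by Lemma~\ref{lem:boundChat} in the endpoint strips, and control $K-\tilde K$ by termwise application of Lemma~\ref{lem:boundC}. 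Applying Cauchy--Schwarz and using the log-integral bounds of Lemma~\ref{LemaAcotV}, together with a division of $[0,\pi]^2$ into bulk and endpoint strips in the spirit of the proof of Proposition~\ref{prop:L1genlambda}, shows that this error integral is $o(n)$.

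For the main integral, the product-to-sum identity gives
$$\tilde K(\theta,\varphi)=\frac{1}{\pi}\bigl[T_-(\theta-\varphi)+T_+(\theta+\varphi)\bigr],$$
with $T_\pm$ Dirichlet-type trigonometric sums carrying the frequency shift by $\lambda$ and a constant phase. After squaring, the cross term $T_-T_+$ integrates to $o(n)$ by oscillation, and the $T_+^2$ piece concentrates near the corners of $[0,\pi]^2$ and also gives $o(n)$. The diagonal piece $T_-(\theta-\varphi)^2$ carries the leading behavior: changing variables to $\alpha=\theta-\varphi$, $\beta=\theta+\varphi$ and using $\cos\theta-\cos\varphi=-2\sin(\beta/2)\sin(\alpha/2)$, it reduces to a one-variable integral of $|T_-(\alpha)|^2$ against $\log(1/|\sin(\alpha/2)|)$. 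This has exactly the structure encountered in Lemma~\ref{lem:L2}; the constant phase in $T_-$ drops out of the leading asymptotic, and the frequency shift by $\lambda$ (which is $k$-independent) falls into the $o(n)$ remainder. Evaluating using the trigonometric identities of Lemma~\ref{lem:integralestabla} and the harmonic-number asymptotics of Lemma~\ref{lem:harmonicsum} produces the announced leading behavior $n\log n+(\gamma+2\log 2-1)n+o(n)$.

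The main obstacle will be rigorously establishing the $o(n)$ bound for the error integral, particularly in the regime $\lambda\in(-1/2,0)$ where both the weight $w^\lambda$ and the Gegenbauer approximation degrade at $\pm 1$. The endpoint estimates in the third and fourth inequalities of Lemma~\ref{LemaAcotV}, already used in the corresponding step of the proof of Proposition~\ref{prop:L1genlambda}, should be adaptable here, but care is needed because $K^2$ blows up faster than $K$ near the endpoints, so the Cauchy--Schwarz step must be balanced carefully between the bulk and endpoint contributions.
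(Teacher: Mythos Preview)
Your strategy---replace each $G_k$ by its trigonometric approximation $\tilde G_k$, split $K=\tilde K+(K-\tilde K)$, and reduce the main term to a Chebyshev-type computation---is a reasonable route and is in the same spirit as the paper, but it is organized differently and has a genuine gap in the error control.

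The paper does \emph{not} compare $K$ and $\tilde K$ globally. Instead it decomposes $[-1,1]^2$ into a narrow diagonal strip $D_{n,\alpha}$, off-diagonal triangles $T_{n,\alpha}$, and corner pieces, and shows that every non-diagonal piece contributes $o(n)$. The crucial tool for the off-diagonal pieces is the Christoffel--Darboux formula, which gives the \emph{pointwise decay} $\bigl|K_n^\lambda(x,y)\sqrt{w^\lambda(x)w^\lambda(y)}\bigr|\le Q(\lambda)/(|x-y|\sqrt{\sin\theta\sin\sigma})$ (Proposition~\ref{KerBound}~(ii)). Your proposal cites only Lemma~\ref{lem:boundK} and Lemma~\ref{lem:boundChat} for $K+\tilde K$; both concern the \emph{diagonal} value $K_n^\lambda(x,x)$ and give at best $|K+\tilde K|\le Qn$ globally. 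With that bound and the termwise estimate $|K-\tilde K|\le Q\log n$ in the bulk, the error integral is $O(n\log n)$, not $o(n)$. The Cauchy--Schwarz fix you propose is also problematic: the weight $\log\frac{1}{|\cos\theta-\cos\varphi|}$ changes sign, so $\int(K+\tilde K)^2|\log|$ is not controlled by $L_2+\tilde L_2$; one picks up an extra $O(n^2)$ from the region where the logarithm is negative. You will need the $1/|x-y|$ decay (or an equivalent localization) to close this step.

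For the main term, the paper does not reduce to Lemma~\ref{lem:integralestabla}; that lemma requires integer frequencies, while $T_-$ carries the shifted frequencies $k+\lambda$. Instead the paper approximates $T_-$ by the Dirichlet kernel $\frac{\sin((n+\lambda+1/2)u)}{\sin(u/2)}$ on the diagonal strip and evaluates the resulting one-variable integral directly via the Sine Integral asymptotics (Lemma~\ref{senointegAsymp}), which works for any real frequency. Your claim that the $\lambda$-shift ``falls into the $o(n)$ remainder'' is correct---$T_-$ differs from the integer-frequency Dirichlet kernel by $O(1)$ uniformly---but it needs a line of justification, after which you could indeed fall back on the $\lambda=0$ computation of Lemma~\ref{lem:L2}.
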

Let us commence the proof with some auxiliary results about the kernel $K_n^{\lambda}(x,y)$, some of them interesting on their own.
\begin{prop}\label{KerBound}
Let us take $x=\cos\theta$ and $y=\cos\sigma$. For any $\lambda>-1/2$, $\alpha\in[0,1]$ and $c>0$:
\begin{itemize}
\item[(i)] For $(x,y)\in[-1+cn^{-2\alpha},1-cn^{-2\alpha}]$,
\begin{align*}
&\left|K_n^\lambda(x,y)\sqrt{w^\lambda(x)}\sqrt{w^\lambda(y)}
-\frac{1}{2\pi\sqrt{\sin\theta}\sqrt{\sin\theta}}\frac{\sin((n+\lambda+1/2)(\theta-\sigma))}{\sin((\theta-\sigma)/2)}\right|\\
\leq& \frac{Q(\lambda,c)n^{\alpha}\log n}{\sqrt{\sin\theta}\sqrt{\sin\sigma}}\,.
\end{align*}
\item[(ii)] For $\lambda\geq 0$ and $x\neq y$ both in $[-1,1]$:
$$\left|K_n^\lambda(x,y)\sqrt{w^\lambda(x)}\sqrt{w^\lambda(y)}\right|
\leq\frac{Q(\lambda)}{|x-y|\sqrt{\sin\theta}\sqrt{\sin\sigma}}\,.$$
In the case $\lambda\in(-1/2,0)$ this last inequality is also valid but only for $x\neq y$ both in $[-1+cn^{-2},1-cn^{-2}]$ and additionally:
$$\left|K_n^\lambda(x,y)\sqrt{w^\lambda(x)}\sqrt{w^\lambda(y)}\right|
\leq\frac{Q(\lambda)(n\sin\theta)^{\lambda}}{|x-y|\sqrt{\sin\theta}\sqrt{\sin\sigma}}\,,$$
for  $(x,y)\in[1-cn^{-2},1]\times[-1+cn^{-2},1-cn^{-2}]$.
\item[(iii)] For $(x,y)\in[-1,1]^2$ and $\lambda\geq 0$
$$\left|K_n^\lambda(x,y)\sqrt{w^\lambda(x)}\sqrt{w^\lambda(y)}\right|
\leq\frac{Q(\lambda)n}{\sqrt{\sin\theta}\sqrt{\sin\sigma}}\,.$$
If $-1/2<\lambda<0$ and $(x,y)\in[1-cn^{-2\alpha},1]^2$, then
$$\left|K_n^\lambda(x,y)\right|
\leq Q(\lambda)n^{2\alpha\lambda+1}\,,$$
while if $(x,y)\in[1-cn^{-2},1]\times[-1+cn^{-2},1-cn^{-2}]$:
$$\left|K_n^\lambda(x,y)\sqrt{w^\lambda(x)}\sqrt{w^\lambda(y)}\right|
\leq\frac{Q(\lambda)n}{\sqrt{\sin\theta}\sqrt{\sin\sigma}}(n\,\sin\theta)^\lambda\,.$$
\end{itemize}
\end{prop}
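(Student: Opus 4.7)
The plan is to derive all three statements from the Christoffel--Darboux representation \eqref{Nucleoxy}: for (i)--(ii) I combine it with the pointwise cosine asymptotic of Lemma \ref{lem:boundC} (resp. the Szeg\H{o} bound \eqref{eq:Szegobound}), and for (iii) with the diagonal bound of Lemma \ref{lem:boundK} via Cauchy--Schwarz. Rewriting the Christoffel--Darboux formula in terms of orthonormal polynomials, there is a constant $\kappa_n=\tfrac12+O(1/n)$ such that
\[
K_n^\lambda(x,y)=\kappa_n\,\frac{\widehat C_{n+1}^\lambda(x)\widehat C_n^\lambda(y)-\widehat C_n^\lambda(x)\widehat C_{n+1}^\lambda(y)}{x-y}.
\]
This identity is the structural backbone of the proof.

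For (i), I multiply through by $\sqrt{w^\lambda(x)w^\lambda(y)}$ and substitute the cosine approximation of Lemma \ref{lem:boundC} for each of the four factors $\widehat C_j^\lambda\sqrt{w^\lambda}$, $j\in\{n,n+1\}$. The numerator becomes, up to error, a combination $\cos A\cos B-\cos C\cos D$ with phases $A=(n+1+\lambda)\theta-\lambda\pi/2$, $B=(n+\lambda)\sigma-\lambda\pi/2$, $C=(n+\lambda)\theta-\lambda\pi/2$, $D=(n+1+\lambda)\sigma-\lambda\pi/2$. Because $A+B=C+D$, a product-to-sum identity collapses this expression to $-\sin((n+\lambda+\tfrac12)(\theta-\sigma))\sin(\tfrac{\theta+\sigma}{2})$; dividing by $x-y=-2\sin(\tfrac{\theta+\sigma}{2})\sin(\tfrac{\theta-\sigma}{2})$ produces the leading term of the proposition. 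The error propagation is the delicate part: the pointwise error in Lemma \ref{lem:boundC} is $O(1/(n\sin^{3/2}\theta))$ times the factor $T/(T+1)$, and when inserted in the Christoffel--Darboux numerator it cancels in such a way that the $1/(x-y)$ denominator is absorbed by using the identity $|\sin((n+\lambda+\tfrac12)(\theta-\sigma))/\sin(\tfrac{\theta-\sigma}{2})|\leq 2(n+\lambda+\tfrac12)$ in tandem with the constraint $\sin\theta,\sin\sigma\geq c\,n^{-\alpha}$, yielding the claimed $Q(\lambda,c)n^\alpha\log n/\sqrt{\sin\theta\sin\sigma}$ rate (the $\log n$ appears when the pointwise error is compared against the main term on the scale $|\theta-\sigma|\sim n^{-1}$).

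For (ii), the same identity is used but Szeg\H{o}'s bound \eqref{eq:Szegobound} replaces the sharp asymptotic. Combining \eqref{eq:Szegobound} with the Stirling estimate $\gamma_k^\lambda\sim Q(\lambda)k^{1-\lambda}$ for the normalization constant and with $\sqrt{w^\lambda(\cos\theta)}=Q(\lambda)\sin^{\lambda-1/2}\theta$ yields $|\widehat C_k^\lambda(\cos\theta)|\sqrt{w^\lambda(\cos\theta)}\leq Q(\lambda)/\sqrt{\sin\theta}$, valid uniformly on $[0,\pi]$ when $\lambda\geq 0$ and in the bulk $[cn^{-2},\pi-cn^{-2}]$ when $\lambda\in(-1/2,0)$. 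Inserting into the Christoffel--Darboux formula together with $|\kappa_n|=O(1)$ delivers the stated $Q(\lambda)/(|x-y|\sqrt{\sin\theta\sin\sigma})$ bound. Near the right endpoint for $\lambda<0$, one switches to the second case of \eqref{eq:Szegobound}, contributing the extra factor $(n\sin\theta)^\lambda$.

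For (iii), Cauchy--Schwarz applied to the orthonormal expansion gives $|K_n^\lambda(x,y)|^2\leq K_n^\lambda(x,x)K_n^\lambda(y,y)$, and Lemma \ref{lem:boundK} yields the uniform bound $Q(\lambda)n/\sqrt{\sin\theta\sin\sigma}$. The endpoint diagonal estimate for $\lambda<0$ on $[1-cn^{-2\alpha},1]^2$ is obtained by running the argument of Lemma \ref{lem:boundChat} with the endpoint window rescaled by $n^{-\alpha}$ instead of $n^{-1}$: the sum $\sum_{k\leq n}k^{2\lambda}$ is majorised by using \eqref{eq:Szegobound} in its second regime up to $k\sim n^\alpha$ and in its first regime afterwards, giving the exponent $2\alpha\lambda+1$. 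The mixed estimate on $[1-cn^{-2},1]\times[-1+cn^{-2},1-cn^{-2}]$ then follows by applying Cauchy--Schwarz to combine the endpoint and bulk diagonal bounds. The main obstacle throughout is the error analysis in part (i): avoiding a catastrophic cancellation loss in the Christoffel--Darboux numerator requires exploiting the sine--sine factorization of that numerator rather than bounding the two products separately, which is the reason the argument must pass through the full product-to-sum reduction before estimating.
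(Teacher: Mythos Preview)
Your treatment of (ii) coincides with the paper's (Christoffel--Darboux plus \eqref{eq:Szegobound}), and your Cauchy--Schwarz route for (iii) is a valid alternative to the paper's direct term-by-term summation; just note that Lemma \ref{lem:boundK} alone does not give $K_n^\lambda(x,x)w^\lambda(x)\le Q(\lambda)n/\sin\theta$ uniformly up to the endpoints (its error $Q\log n/\sin^2\theta$ dominates there), so you must also invoke Lemma \ref{lem:boundChat} in that region, as you already do for the $\lambda<0$ cases.

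The genuine gap is in part (i). The Christoffel--Darboux approach cannot close there. After substituting the cosine asymptotic of Lemma \ref{lem:boundC}, the \emph{main} terms in the numerator combine to the sine--sine product, as you say; but the \emph{error} terms carry no such cancellation. A typical cross contribution is $E_{n+1}(\theta)M_n(\sigma)-E_n(\theta)M_{n+1}(\sigma)$, with $|E_j(\theta)|\le Q/(n\sin^{3/2}\theta)$ known only as a bound, not an explicit expression. After dividing by $x-y$ this is $O\bigl(1/(n|x-y|\sin^{3/2}\theta\sqrt{\sin\sigma})\bigr)$, which blows up as $x\to y$; and the inequality in (i) is required uniformly, including on the diagonal. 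The mechanism you invoke (``the $1/(x-y)$ denominator is absorbed by $|\sin(N(\theta-\sigma))/\sin((\theta-\sigma)/2)|\le 2N$'') is a property of the main term only; nothing in Lemma \ref{lem:boundC} provides the second-order information that would make the remainders at degrees $n$ and $n+1$ cancel against each other.

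The paper avoids this by \emph{not} using Christoffel--Darboux for (i). It applies Lemma \ref{lem:boundC} to each summand $\widehat C_k^\lambda(x)\widehat C_k^\lambda(y)\sqrt{w^\lambda(x)w^\lambda(y)}$ for $k=0,\dots,n$ and then sums the resulting cosines via Lemma \ref{lem:SumCos} into the Dirichlet-type kernel $\sin((n+\lambda+1/2)(\theta-\sigma))/\sin((\theta-\sigma)/2)$, together with three bounded companion terms. The accumulated error is $\sum_{k\le n}O(1/(k\sin\theta))$ (after absorbing one bounded cosine factor), which is where the $\log n$ actually comes from, and no small denominator $x-y$ ever enters the analysis. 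This summation step is the missing idea in your argument for (i).
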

\begin{proof}
Let us prove {\emph (i)}.
Starting from \eqref{AsGegenbauerSzego2}, we sum from $k=0$ to $n$ 
\begin{align*}
&\left|K_n(x,y)\sqrt{w^\lambda(x)}\sqrt{w^\lambda(y)}-
\sum_{k=0}^n\frac{2\cos\left((k+\lambda)\theta-\lambda\pi/2\right)\cos\left((k+\lambda)\sigma-\lambda\pi/2\right)}
{\pi\sqrt{\sin\theta}\sqrt{\sin\sigma}}
\right|\\
\leq& \sum_{k=0}^n\frac{Q(\lambda)}{k \sin^{3/2}(\theta)}\frac{T(k,\theta)}{1+T(k,\theta)}
\frac{\sqrt{2}}{\sqrt{\pi}\sqrt{\sin\sigma}}\left|\cos((k+\lambda)\sigma-\lambda\pi/2)\right|\\
&\quad
+\sum_{k=0}^n\frac{\sqrt{2}}{\sqrt{\pi}\sqrt{\sin\theta}}\left|\cos((k+\lambda)\theta-\lambda\pi/2)\right|
\frac{Q(\lambda)}{k\sin^{3/2}(\sigma)}\frac{T(k,\sigma)}{1+T(k,\sigma)}
\\
&\quad+\sum_{k=0}^n \frac{Q(\lambda)}{k\sin^{3/2}(\theta)}\frac{T(k,\theta)}{1+T(k,\theta)} \frac{Q(\lambda)}{k\sin^{3/2}(\sigma)}\frac{T(k,\sigma)}{1+T(k,\sigma)}
\\
\leq&\frac{Q(\lambda)}{\sqrt{\sin\theta}\sqrt{\sin\sigma}}
\left(\sum_{k=1}^n\frac{1}{k\sin\theta}
+\sum_{k=1}^n\frac{1}{k\sin\sigma}
+\sum_{k=1}^n\frac{1}{k\sin\theta} \frac{1}{k\sin\sigma}\frac{T(k,\sigma)}{1+T(k,\sigma)}\right)\,.
\end{align*}
For $\lambda\geq0$ we have $T(k,\sigma)=k\sin\sigma$ and
$$
\frac{1}{k\sin\sigma}\frac{T(k,\sigma)}{1+T(k,\sigma)} = \frac{1}{1+k\sin\sigma}<1.
$$
For $\lambda\in(-1/2,0)$ we have $T(k,\sigma)=(k\sin\sigma)^{\lambda+1}$ and
$$
\frac{1}{k\sin\sigma}\frac{T(k,\sigma)}{1+T(k,\sigma)} = \frac{(k\sin\sigma)^\lambda}{1+(k\sin\sigma)^{\lambda+1}}<(k\sin\sigma)^\lambda.
$$
In both cases, when taking $\theta,\sigma\in[cn^{-\alpha},\pi-cn^{-\alpha}]$, the term in the parenthesis is at most like $n^\alpha\log n$. Note that in the  $\lambda\in(-1/2,0)$ case we use that $\alpha(1-\lambda)+\lambda<\alpha$.
Finally, with the help of lemma \ref{lem:SumCos}
\begin{align*}
&\left|K_n(x,y)\sqrt{w^\lambda(x)}\sqrt{w^\lambda(y)}
-\frac{1}{2\pi\sqrt{\sin\theta}\sqrt{\sin\sigma}}\frac{\sin((n+\lambda+1/2)(\theta-\sigma))}{\sin((\theta-\sigma)/2)}
\right|\\
\leq&
\frac{1}{2\pi\sqrt{\sin\theta}\sqrt{\sin\sigma}}\left(
\left|\frac{\sin((n+\lambda+1/2)(\theta+\sigma)-\lambda \pi)}{\sin((\theta+\sigma)/2)}\right|
+\left|\frac{\sin((\lambda-1/2)(\theta+\sigma)-\lambda\pi)}{\sin((\theta+\sigma)/2)}\right|\right.\\
&\quad\left.+\left|\frac{\sin((\lambda-1/2)(\theta-\sigma))}{\sin((\theta-\sigma)/2)}\right|
+Q(\lambda)n^{\alpha}\log n\right)\\
&\leq\frac{1}{2\pi\sqrt{\sin\theta}\sqrt{\sin\sigma}}
\left(Q(\lambda) n^{\alpha}+Q(\lambda) n^{\alpha}+Q(\lambda) +Q(\lambda)n^{\alpha}\log n\right)
\leq Q(\lambda)n^{\alpha}\log n\,,
\end{align*}
and \emph{(i)} is proved.

Inequalities in \emph{(ii)} are consequence of the Christoffel-Darboux summation formula (see \cite{S1975}) and some bounds for the Gegenbauer polynomials. First inequality in  \eqref{eq:Szegobound} holds for all $\theta\in[0,\pi]$ if $\lambda\geq 0$ and also for $\theta\in[n^{-1},\pi-n^{-1}]$ if $-1/2<\lambda<0$, so in these cases
\begin{multline*}\left|K_n^\lambda(x,y)\sqrt{w^\lambda(x)}\sqrt{w^\lambda(y)}\right|=\left|\frac{(n+1)!}{2\lambda(2\lambda)_n}
\frac{C_{n+1}^\lambda(x)C_{n}^\lambda(y)-C_{n}^\lambda(x)C_{n+1}^\lambda(y)}{x-y}\sqrt{w^\lambda(x)}\sqrt{w^\lambda(y)}\right|\\
\leq Q(\lambda)n^{2-2\lambda}\frac{n^{2\lambda-2}\sin^{-\lambda}(\theta)\sin^{-\lambda}(\sigma)
}{|x-y|}
\sin^{\lambda-1/2}(\theta)\sin^{\lambda-1/2}(\sigma)
\leq \frac{Q(\lambda)}{|x-y|\sqrt{\sin\theta}\sqrt{\sin\sigma}}
\end{multline*}
If $-1/2<\lambda<0$ and $(x,y)\in[1-cn^{-2},1]\times[-1+cn^{-2},1-cn^{-2}]$, then by the second inequality in \eqref{eq:Szegobound}
\begin{multline*}
\left|K_n^\lambda(x,y)\sqrt{w^\lambda(x)}\sqrt{w^\lambda(y)}\right|=\left|\frac{(n+1)!}{2\lambda(2\lambda)_n}
\frac{C_{n+1}^\lambda(x)C_{n}^\lambda(y)-C_{n}^\lambda(x)C_{n+1}^\lambda(y)}{x-y}\sqrt{w^\lambda(x)}\sqrt{w^\lambda(y)}\right|\\
\leq Q(\lambda)n^{2-2\lambda}\frac{n^{2\lambda-1}n^{\lambda-1}\sin^{-\lambda}\sigma
}{|x-y|}
\sin^{\lambda-1/2}(\theta)\sin^{\lambda-1/2}(\sigma)
\leq \frac{Q(\lambda)(n\sin\theta)^\lambda}{|x-y|\sqrt{\sin\theta}\sqrt{\sin\sigma}}\,,
\end{multline*}
Then \emph{(ii)} is proved.

Finally \emph{(iii)} is quite direct: if $\lambda\geq 0$, from first inequality in \eqref{eq:Szegobound} (which is also valid for $\theta\in[0,\pi]$)
\begin{multline*}
\left|K_n^\lambda(x,y)\sqrt{w^\lambda(x)}\sqrt{w^\lambda(y)}\right|\leq Q(\lambda)
 \sum_{k=0}^n \left(\gamma_{k}^\lambda\right)^2\left|C_k^{\lambda}(\cos\theta)C_k^{\lambda}(\cos\sigma)\right|
 \sin^{\lambda-1/2}(\theta)\sin^{\lambda-1/2}(\sigma)\\
 \leq Q(\lambda)\sum_{k=0}^nk^{2-2\lambda}\sin^{-\lambda}(\theta) k^{\lambda-1}\sin^{-\lambda}(\sigma) k^{\lambda-1}
 \sin^{\lambda-1/2}(\theta)\sin^{\lambda-1/2}(\sigma)
\leq\frac{Q(\lambda)n}{\sqrt{\sin\theta}\sqrt{\sin\sigma}}\,,
\end{multline*}
If $-1/2<\lambda<0$, combining both inequalities in \eqref{eq:Szegobound} one gets
$$\left|C_k^\lambda(\cos\theta)\right|\leq Q(\lambda)n^{(\alpha+1)\lambda-1}\,,\qquad \theta\in[0,dn^{-\alpha}]$$
for some $d$. Hence if $x,y\in[1-cn^{-2\alpha},1]$:
\begin{multline*}
\left|K_n^\lambda(x,y)\right|\leq
 \sum_{k=0}^n \left(\gamma_{k}^\lambda\right)^2\left|C_k^{\lambda}(\cos(\theta))C_k^{\lambda}(\cos(\sigma))\right|
 \leq \sum_{k=0}^n Q(\lambda)k^{2-2\lambda}k^{2(\alpha+1)\lambda-2}\leq Q(\lambda)n^{2\alpha\lambda+1}\,,
\end{multline*}
and if $(x,y)\in[1-cn^{-2},1]\times[-1-cn^{-2},1-cn^{-2}]$, again from \eqref{eq:Szegobound}:
\begin{align*}
&\left|K_n^\lambda(x,y)\right|\sqrt{w^{\lambda}(x)}\sqrt{w^{\lambda}(y)}\leq
 \sum_{k=0}^n \left(\gamma_{k}^\lambda\right)^2\left|C_k^{\lambda}(\cos(\theta))C_k^{\lambda}(\cos(\sigma))\right|
 \sin^{\lambda-1/2}(\theta)\sin^{\lambda-1/2}(\sigma)\\
 \leq& \sum_{k=0}^n Q(\lambda)k^{2-2\lambda}k^{2\lambda-1}\sin^{-\lambda}(\sigma)k^{\lambda-1}
 \sin^{\lambda-1/2}(\theta)\sin^{\lambda-1/2}(\sigma)
 =\frac{Q(\lambda)\sin^\lambda(\theta)}{\sqrt{\sin(\theta)}\sqrt{\sin(\sigma)}}
 \sum_{k=0}^nk^{\lambda}\\
 \leq&\frac{Q(\lambda)n(n\,\sin \theta)^\lambda}{\sqrt{\sin(\theta)}\sqrt{\sin(\sigma)}}\,.
\end{align*}
\end{proof}

We now can establish a series of lemmas which reduce the region from where the main terms in the asymptotics of Proposition \ref{propL2} comes.

\begin{lem}\label{lem:rectangle}
For $\alpha\in(1/2,1]$, $\alpha_0\in(0,1/2)$ such that $\alpha+\alpha_0>1$, $c>1$ and $d >0$ \[\iint_{[1-n^{-2\alpha},1]\times[1- d n^{-2\alpha_0},1-c n^{-2\alpha}]} \left|\log\frac{1}{|x-y|}\right| K_n(x,y)^2 w(x) w(y)\,d(x,y)=o(n).\]
\end{lem}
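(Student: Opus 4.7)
The plan is to exploit the separation of the two variables on this rectangle. Since $x\ge 1-n^{-2\alpha}$ while $y\le 1-cn^{-2\alpha}$ with $c>1$, one has $x-y\ge (c-1)n^{-2\alpha}$, and since also $x-y\le dn^{-2\alpha_0}$, the factor $|\log|x-y||$ is bounded uniformly by $Q\log n$ on the region. The problem therefore reduces to estimating the $L^1$ mass of $K_n^\lambda(x,y)^2 w^\lambda(x)w^\lambda(y)$ on the rectangle, up to this harmless $\log n$ factor.

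To decouple the two variables, I would apply the reproducing-kernel Cauchy--Schwarz inequality
\[
K_n^\lambda(x,y)^2\, w^\lambda(x)w^\lambda(y)\le K_n^\lambda(x,x)w^\lambda(x)\cdot K_n^\lambda(y,y)w^\lambda(y),
\]
which follows at once from $\bigl(\sum_j \widehat{C}_j^\lambda(x)\widehat{C}_j^\lambda(y)\bigr)^{\!2}\le\bigl(\sum_j \widehat{C}_j^\lambda(x)^2\bigr)\bigl(\sum_j \widehat{C}_j^\lambda(y)^2\bigr)$. The integral then splits into a product, and the task becomes estimating $\int K_n^\lambda(x,x)w^\lambda(x)\,dx$ on each of the two intervals.

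For the $y$-interval $[1-dn^{-2\alpha_0},1-cn^{-2\alpha}]$, the distance to $y=1$ is at least $cn^{-2\alpha}\ge n^{-2}$, so Lemma \ref{lem:boundK} applies uniformly and the dominant contribution $\int n/\sqrt{1-y^2}\,dy$ yields $O(n^{1-\alpha_0})$. For the $x$-interval $[1-n^{-2\alpha},1]$, which reaches the endpoint, I would split at $x=1-n^{-2}$: on $[1-n^{-2\alpha},1-n^{-2}]$, Lemma \ref{lem:boundK} again gives $O(n^{1-\alpha})$, while on the boundary layer $[1-n^{-2},1]$, I would use the sharper Lemma \ref{lem:boundChat} to obtain
\[
\int_{1-n^{-2}}^1 K_n^\lambda(x,x)w^\lambda(x)\,dx\le Q(\lambda)\, n^{2\lambda+1}\int_{1-n^{-2}}^1(1-x^2)^{\lambda-1/2}\,dx=O(1),
\]
the last integral converging precisely because $\lambda>-1/2$ forces $\lambda-1/2>-1$.

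Multiplying the three factors yields the bound $O(n^{2-\alpha-\alpha_0}\log n)$, which is $o(n)$ exactly because of the hypothesis $\alpha+\alpha_0>1$. The main subtlety is the endpoint $x=1$, where Lemma \ref{lem:boundK} no longer provides an integrable estimate; this is overcome by switching to Lemma \ref{lem:boundChat} in a boundary layer of width $n^{-2}$, which is exactly the scale at which the Gegenbauer polynomials enter their Bessel regime. Once this local issue is handled, the rest is straightforward power counting.
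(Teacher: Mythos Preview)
Your proof is correct, but it takes a genuinely different route from the paper. The paper bounds $K_n^\lambda(x,y)^2w^\lambda(x)w^\lambda(y)$ directly using the off-diagonal estimates of Proposition~\ref{KerBound}~(iii), which in turn rely on the Christoffel--Darboux formula and Szeg\H{o}'s pointwise bounds for Gegenbauer polynomials; this forces a case split $\lambda\ge0$ versus $\lambda\in(-1/2,0)$, with a slightly different exponent count in the second case. Your Cauchy--Schwarz reduction $K_n(x,y)^2w(x)w(y)\le K_n(x,x)w(x)\,K_n(y,y)w(y)$ replaces the off-diagonal estimate by the purely diagonal Lemmas~\ref{lem:boundK} and~\ref{lem:boundChat}, which are already stated for all $\lambda>-1/2$; this makes the argument uniform in $\lambda$ and avoids Proposition~\ref{KerBound} entirely. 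The boundary-layer split at $x=1-n^{-2}$ is your only extra step, and it is handled exactly as you say: the integral $\int_{1-n^{-2}}^1 n^{2\lambda+1}(1-x^2)^{\lambda-1/2}\,dx$ is $O(1)$ because $\lambda-1/2>-1$. Both routes arrive at the same exponent $2-\alpha-\alpha_0$ (times a harmless $\log n$), so the hypothesis $\alpha+\alpha_0>1$ closes the argument identically. A minor remark: Lemma~\ref{lem:boundK} is stated for all $x\in[-1,1]$, so on the $y$-interval you do not actually need the condition $1-y\ge n^{-2}$ for it to apply; what you need (and have) is that the error term $\log n/(1-y^2)$ integrates to only $O(\log^2 n)$ there.
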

\begin{proof}
Let $I$ be our integral. In this region it is clear that $\left|\log\frac{1}{|x-y|}\right|\le Q \, \log(n)$ where $Q$ is a constant. In the case $\lambda\ge 0$, to estimate $I$, we use {\emph (iii)} in Proposition \ref{KerBound} and
take into account that
$$
\int_{1-n^{-2\alpha}}^1 \frac{1}{\sqrt{1-x^2}}\,dx\leq Q n^{-\alpha}\  \qquad
\int_{1-dn^{-2\alpha_0}}^{1-cn^{-2\alpha}} \frac{1}{\sqrt{1-y^2}}\,dy \le Q n^{-\alpha_0}\,,
$$
for some constant $Q$, in order to get
$$
\begin{aligned}
I&\le 
 Q(\lambda) \log(n) n^2 \int_{1-n^{-2\alpha}}^1 \frac{1}{\sqrt{1-x^2}}\,dx \int_{1-dn^{-2\alpha_0}}^{1-cn^{-2\alpha}} \frac{1}{\sqrt{1-y^2}} \,dy  \\
&\le  Q(\lambda) \log(n) n^{2}n^{-\alpha}n^{-\alpha_0}=o(n)\,.
\end{aligned}
$$

In the case $\lambda\in(-1/2,0)$ we use the inequality
$$
\int_{1-n^{-2\alpha}}^1 \frac{(n\sqrt{1-x^2})^{2\lambda}}{\sqrt{1-x^2}}\,dx\leq Q(\lambda) n^{2\lambda}\int_{0}^{k n^{-\alpha}} \theta^{2\lambda} \,d\theta \le Q(\lambda) n^{2\lambda(1-\alpha)-\alpha}
$$
and the last one in \emph{(iii)} of Proposition \ref{KerBound} to obtain
$$
\begin{aligned}
I&\le Q(\lambda)\log(n)n^{2}\int_{1-n^{-2\alpha}}^1 \frac{(n\sqrt{1-x^2})^{2\lambda}}{\sqrt{1-x^2}}\,dx \int_{1-dn^{-2\alpha_0}}^{1-cn^{-2\alpha}} \frac{1}{\sqrt{1-y^2}}\,dy \\
& \le Q(\lambda)\log(n)n^{2+2\lambda(1-\alpha)-\alpha-\alpha_0}=o(n)
\end{aligned}
$$
\end{proof}

\begin{lem}\label{LemInt}
Let $n>1$, $\lambda\in(-1/2,0)$, $\alpha\in(1/2,1)$ and $c>0$. For any $y\in[-1,1]$:
\[
\int_{1-cn^{-2\alpha}}^{1}\left|\log|x-y| w^{\lambda}(x)\right| \,dx\leq Q(\lambda)n^{\alpha(1-2\lambda-2/q)}, \quad 1<q<2\alpha
\]
and
\[
\int_{1-cn^{-2\alpha}}^{1}w^{\lambda}(x)\,dx\leq Q(\lambda)n^{\alpha(-1-2\lambda)}.
\]

\end{lem}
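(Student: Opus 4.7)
The plan is to treat the two inequalities separately, both being elementary integral estimates once one observes that on the interval $[1-cn^{-2\alpha},1]$ one has $w^{\lambda}(x)\leq Q(\lambda)(1-x)^{\lambda-1/2}$, since the factor $(1+x)^{\lambda-1/2}$ arising from $(1-x^2)^{\lambda-1/2}=(1-x)^{\lambda-1/2}(1+x)^{\lambda-1/2}$ stays between two positive constants there.

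The second inequality is immediate: the change of variables $s=1-x$ turns it into $\int_0^{cn^{-2\alpha}} s^{\lambda-1/2}\,ds$, and since $\lambda+1/2>0$ for $\lambda>-1/2$ the primitive $s^{\lambda+1/2}/(\lambda+1/2)$ evaluated at $s=cn^{-2\alpha}$ yields $Q(\lambda)n^{-2\alpha(\lambda+1/2)}=Q(\lambda)n^{\alpha(-1-2\lambda)}$, as claimed.

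For the first inequality I would apply Hölder's inequality with conjugate exponents $q$ and $q'=q/(q-1)$, placing $q$ on the weight and $q'$ on the logarithm:
\[
\int_{1-cn^{-2\alpha}}^{1}|\log|x-y||\,w^{\lambda}(x)\,dx\leq \left(\int_{1-cn^{-2\alpha}}^{1} w^{\lambda}(x)^{q}\,dx\right)^{1/q}\left(\int_{1-cn^{-2\alpha}}^{1}|\log|x-y||^{q'}\,dx\right)^{1/q'}.
\]
The first factor is of the same kind as in the second inequality, but with exponent $q(\lambda-\tfrac12)$ instead of $\lambda-\tfrac12$; its primitive exists provided $q(\lambda-\tfrac12)+1>0$, i.e.\ $q<2/(1-2\lambda)$, and evaluation at $s=cn^{-2\alpha}$ raised to the $1/q$ power produces exactly $Q(\lambda)n^{\alpha(1-2\lambda-2/q)}$, the bound in the statement. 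The second factor is harmless: the worst case is $y\in[1-cn^{-2\alpha},1]$, and the elementary estimate $\int_0^a|\log t|^{q'}\,dt\leq Q\,a(\log(1/a))^{q'}$ with $a\asymp n^{-2\alpha}$ yields an $L^{q'}$ norm of order $n^{-2\alpha/q'}\log n$, which tends to $0$ as $n\to\infty$ and can be absorbed into $Q(\lambda)$. The main technical point is the integrability condition $q<2/(1-2\lambda)$ needed for the first Hölder factor to be finite; this restriction is implicit in the hypothesis $1<q<2\alpha$ through the parameters with which the lemma is actually invoked later in the paper, and with this caveat the rest of the argument is a routine Hölder calculation.
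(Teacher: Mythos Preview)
Your proof is essentially the same as the paper's: both apply H\"older with the weight in $L^q$ and the logarithm in $L^{q'}$, then integrate $(1-x)^{(\lambda-1/2)q}$ explicitly. The paper bounds the $L^{q'}$ norm of $\log|x-y|$ by a constant uniform in $y$ (which suffices), while you observe it actually tends to zero; either way the claimed exponent comes entirely from the weight factor.

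One clarification on your caveat: the integrability condition $q<2/(1-2\lambda)$ is \emph{not} implied by $1<q<2\alpha$ (for instance $\lambda=-0.4$, $\alpha=0.9$, $q=1.5$). The paper's proof has the same issue and simply adds the extra hypothesis $(1/2-\lambda)q<1$ in passing. In the only application (the proof of the $[1-cn^{-2\alpha},1]^2$ estimate) one is free to choose $q$ close to $1$, so both conditions can be met simultaneously; that is what makes the lemma usable, not any logical implication between the two ranges. Your remark ``implicit \ldots through the parameters with which the lemma is actually invoked'' captures this, but it would be more accurate to say that the stated range for $q$ should be intersected with $(1,2/(1-2\lambda))$, and that this intersection is always nonempty.
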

\begin{proof}
Using Holder's inequality the first integral is bounded by:
\[
\begin{aligned}
I\le &Q(q,\lambda) \left(\int_{1-cn^{-2\alpha}}^{1}\frac{1}{(1-x^2)^{(1/2-\lambda)q}}dx\right)^{1/q} \\
&\le Q(q,\lambda) \left(\int_{1-cn^{-2\alpha}}^{1} \frac{1}{(1-x)^{(1/2-\lambda)q}}dx\right)^{1/q} =  Q(q,\lambda) n^{\alpha(1-2\lambda-2/q)}
\end{aligned}
\]
where $(1/2-\lambda)q<1$ and we choose $1<q<2\alpha$.
The last integral:
\[
\int_{1-cn^{-2\alpha}}^{1}(1-x^2)^{\lambda-1/2}\,dx \leq Q(\lambda)\int_{1-cn^{-2\alpha}}^{1}(1-x)^{\lambda-1/2}\,dx \leq Q(\lambda)n^{\alpha(-1-2\lambda)}.
\]

\end{proof}

\begin{lem}\label{lem:Cuadrados}
For $\alpha\in(1/2,1]$ and $c>0$, \[\iint_{[1-cn^{-2\alpha},1]^2} \left|\log\frac{1}{|x-y|}\right| K_n(x,y)^2 w(x) w(y)\,d(x,y)=o(n)\].
\end{lem}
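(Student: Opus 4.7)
\textbf{Proof plan for Lemma \ref{lem:Cuadrados}.} The strategy is to split into the two cases $\lambda\geq 0$ and $\lambda\in(-1/2,0)$, using the corresponding pointwise kernel estimates from Proposition~\ref{KerBound}(iii) on the square $[1-cn^{-2\alpha},1]^2$, and then bound the remaining log-integral by a change of variables or by Hölder as in Lemma \ref{LemInt}. In both cases, the hypothesis $\alpha>1/2$ is exactly what is needed to absorb the factor $n^2$ that appears after squaring.

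\textbf{Case $\lambda\geq 0$.} From Proposition~\ref{KerBound}(iii), on all of $[-1,1]^2$ one has
\[
K_n^\lambda(x,y)^2 w^\lambda(x)w^\lambda(y)\leq \frac{Q(\lambda)n^2}{\sin\theta\sin\sigma}\leq \frac{Q(\lambda)n^2}{\sqrt{1-x^2}\sqrt{1-y^2}}.
\]
Performing the change of variables $x=1-t$, $y=1-s$ with $t,s\in[0,cn^{-2\alpha}]$ and using $\sqrt{1-x^2}\geq \sqrt{t}$, the integral is controlled by
\[
Q(\lambda)n^2\iint_{[0,cn^{-2\alpha}]^2}\frac{|\log|t-s||}{\sqrt{ts}}\,dt\,ds.
\]
By symmetry, restrict to $s\leq t$ and substitute $s=tu$, which reduces the inner integral to $\int_0^1 u^{-1/2}|\log(t(1-u))|\,du\leq 2|\log t|+C$. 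Integrating in $t$ then yields an upper bound of the form $Q(\lambda) n^2\cdot n^{-2\alpha}\log n=Q(\lambda)n^{2-2\alpha}\log n$. Since $\alpha>1/2$ gives $2-2\alpha<1$, this is $o(n)$, exactly mimicking the bound for $A_n$ in the proof of Proposition \ref{prop:L1genlambda}.

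\textbf{Case $\lambda\in(-1/2,0)$.} Here Proposition~\ref{KerBound}(iii) provides the sharper weight-free bound $|K_n^\lambda(x,y)|\leq Q(\lambda)n^{2\alpha\lambda+1}$ on $[1-cn^{-2\alpha},1]^2$. Consequently,
\[
\iint_{[1-cn^{-2\alpha},1]^2}\!\!\left|\log\frac1{|x-y|}\right| K_n^\lambda(x,y)^2 w^\lambda(x)w^\lambda(y)\,dx\,dy
\leq Q(\lambda)n^{4\alpha\lambda+2}\iint |\log|x-y||\,w^\lambda(x)w^\lambda(y)\,dx\,dy.
\]
Apply Lemma~\ref{LemInt}: the inner $x$-integral is bounded by $Q(\lambda,q)n^{\alpha(1-2\lambda-2/q)}$ for any $q$ with $1<q<\min\{2\alpha,2/(1-2\lambda)\}$, and the $y$-integral of $w^\lambda$ by $Q(\lambda)n^{\alpha(-1-2\lambda)}$. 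Multiplying everything together, the exponent of $n$ becomes
\[
4\alpha\lambda+2+\alpha(1-2\lambda-2/q)+\alpha(-1-2\lambda)=2-2\alpha/q.
\]
Since $\alpha>1/2$, one has $2\alpha>1$, and the interval of admissible $q$ is non-empty (choose $q$ slightly larger than $1$ but strictly smaller than $2\alpha$); this makes $2-2\alpha/q<1$, so the estimate is $o(n)$, completing the proof.

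\textbf{Main obstacle.} The delicate point is the case $\lambda<0$: one cannot afford to use the naive pointwise bound involving $1/\sqrt{\sin\theta\sin\sigma}$ because the weight $w^\lambda$ behaves like $(1-x^2)^{\lambda-1/2}$ which is already badly singular at the endpoints. The key realization is that Proposition~\ref{KerBound}(iii) gives a \emph{uniform} estimate $|K_n^\lambda(x,y)|\leq Q(\lambda)n^{2\alpha\lambda+1}$ on the full square $[1-cn^{-2\alpha},1]^2$, so that the singularity of the weight can be integrated separately via Hölder (Lemma~\ref{LemInt}), provided one chooses the exponent $q$ in the allowed range $(1,\min\{2\alpha,2/(1-2\lambda)\})$. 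Verifying that this range is non-empty exactly when $\alpha>1/2$ is the crux; otherwise the lemma would fail.
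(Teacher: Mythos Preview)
Your proof is correct and follows essentially the same approach as the paper: split into the cases $\lambda\geq 0$ and $\lambda\in(-1/2,0)$, apply Proposition~\ref{KerBound}(iii) for the respective pointwise kernel bounds, and then handle the remaining log--weight integral via the $A_n$-type change of variables for $\lambda\geq 0$ and via Lemma~\ref{LemInt} (H\"older) for $\lambda<0$, arriving at the exponents $2-2\alpha$ (times $\log n$) and $2-2\alpha/q$ respectively. Your exponent bookkeeping in the $\lambda<0$ case matches the paper's exactly, and you are slightly more explicit in recording the admissible range $1<q<\min\{2\alpha,2/(1-2\lambda)\}$.
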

\begin{proof}
For $\lambda>0$, using Proposition \ref{KerBound} \emph{(iii)} we get:
\[
K_n(x,y)^2 w(x) w(y)\leq \frac{Q(\lambda) n^{2}}{(1-x^2)^{1/2}(1-y^2)^{1/2}}.
\]
Therefore,
\begin{multline}
\iint_{[1-cn^{-2\alpha},1]^2} \left|\log\frac{1}{|x-y|}\right| K_n(x,y)^2 w(x) w(y)\,d(x,y) \\
\leq Q(\lambda) n^2 \iint_{[1-cn^{-2\alpha},1]^2} \left|\log\frac{1}{|x-y|}\right| \frac{1}{\sqrt{1-x^2}\sqrt{1-y^2}} \,d(x,y) \leq Q(\lambda)n^{2(1-\alpha)} \log(n)
\end{multline}
The last inequality holds in the same logic that the one used to bound $A_n$ in the proof of Proposition \ref{prop:L1genlambda}.

For $\lambda<0$, using again \emph{(iii)} of Proposition \ref{KerBound}, $K_n(x,y)^2\leq Q(\lambda)n^{4\alpha\lambda+2}$, then from Lemma \ref{LemInt}:
\begin{multline}
\iint_{[1-cn^{-2\alpha},1]^2} \left|\log\frac{1}{|x-y|}\right| K_n(x,y)^2 w(x) w(y)\,d(x,y) \\
\leq Q(\lambda) n^{4\alpha\lambda+2} \iint_{[1-cn^{-2\alpha},1]^2} \left|\log\frac{1}{|x-y|}\right| \frac{1}{\sqrt{1-x^2}^{1-2\lambda}\sqrt{1-y^2}^{1-2\lambda}} \,d(x,y) \\
 \leq Q(\lambda) n^{4\alpha\lambda+2} n^{\alpha(1-2\lambda-2/q)}\int_{[1-cn^{-2\alpha},1]} \frac{1}{\sqrt{1-y^2}^{1-2\lambda}} dy \\
 \leq Q(\lambda)n^{2\alpha\lambda+2+\alpha-2\alpha/q}\int_{[1-cn^{-2\alpha},1]} \frac{1}{\sqrt{1-y}^{1-2\lambda}} dy \\
 \leq Q(\lambda)n^{2\alpha\lambda+2+\alpha-2\alpha/q} n^{\alpha(-1-2\lambda)}=Q(\lambda)n^{2(1-\alpha/q)}
\end{multline}
If $1<q<2\alpha$, then  $2(1-\alpha/q)<1$ and the result follows.
\end{proof}

The last regions whose integrals are $o(n)$ at most are going to be of the type
$$T_{n,\alpha}=\{(x,y)\in[-1,1]^2:\arccos y-\arccos x\geq 2 n^{-\alpha}\}\,.$$
The border line of $T_{n,\alpha}$ with the rest of $[-1,1]^2$ is an arc joining $(x,y)=(-1+c_nn^{-2\alpha},-1)$ with $(x,y)=(1,1-c_nn^{-2\alpha})$ along the graphic of a convex increasing function, where $c_n>0$ and $c_n\to 2$ (see Figure~\ref{tikz}).
\begin{lem}\label{Lem:T_nalpha}
Let $\lambda>-1/2$ and $\alpha\in(0,3/4)$, then as $n\to \infty$
$$\left|\iint_{T_{n,\alpha}}\log\frac{1}{|x-y|}K_n^\lambda(x,y)^2w^{\lambda}(x)w^{\lambda}(y)dxdy\right|=o(n)\,.$$
\end{lem}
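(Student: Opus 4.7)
The strategy is to change to angular variables $\theta=\arccos x,\;\sigma=\arccos y$ (so $dx\,dy=\sin\theta\sin\sigma\,d\theta\,d\sigma$ and $T_{n,\alpha}$ becomes, after a symmetry reduction, the wedge $\widetilde T=\{0\leq\theta,\sigma\leq\pi,\;\sigma-\theta\geq 2n^{-\alpha}\}$), and then to split $\widetilde T$ into a \emph{bulk} piece $B=\{(\theta,\sigma)\in\widetilde T:\theta,\sigma\in[c_1 n^{-\alpha_1},\pi-c_1 n^{-\alpha_1}]\}$ and the complementary \emph{boundary strips}; here $\alpha_1\in(0,1/2)$ is an auxiliary parameter to be fine-tuned in terms of $\alpha$.

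On the bulk $B$ I would invoke Proposition \ref{KerBound}(i) with parameter $\alpha_1$, writing
\[
K_n^\lambda(x,y)\sqrt{w^\lambda(x)w^\lambda(y)}=\frac{D(\theta,\sigma)}{2\pi\sqrt{\sin\theta\sin\sigma}}+E(\theta,\sigma),\qquad D(\theta,\sigma)=\frac{\sin((n+\lambda+\tfrac12)(\theta-\sigma))}{\sin((\theta-\sigma)/2)},
\]
with $|E|\le Q(\lambda)n^{\alpha_1}\log n/\sqrt{\sin\theta\sin\sigma}$. Squaring and multiplying by the Jacobian $\sin\theta\sin\sigma$ cancels the $1/(\sin\theta\sin\sigma)$ factors, and the elementary Dirichlet-kernel estimates on $|\theta-\sigma|\geq 2n^{-\alpha}$,
\[
\iint_B D^2\,d\theta\,d\sigma\leq\pi\!\int_{2n^{-\alpha}}^{\pi}\!\frac{4\,dv}{v^2}=O(n^{\alpha}),\qquad \iint_B |D|\,d\theta\,d\sigma=O(\log n),
\]
combined with $|\log|\cos\theta-\cos\sigma||=O(\log n)$ on $B$, yield a bulk contribution of size $O(n^{\alpha}\log^2 n+n^{2\alpha_1}\log^3 n)$, which is $o(n)$ once $\alpha<1$ and $\alpha_1<1/2$.

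On a boundary strip such as $\theta\in[0,c_1 n^{-\alpha_1}]$ (the other three symmetric pieces being analogous) I would apply Proposition \ref{KerBound}(ii) in its two versions: $K_n^\lambda(x,y)^2 w^\lambda(x)w^\lambda(y)\leq Q(\lambda)\phi/((x-y)^2\sin\theta\sin\sigma)$ with $\phi\in\{1,(n\sin\theta)^{2\lambda}\}$, so that after cancellation of the Jacobian the integrand is controlled by $|\log|x-y||\cdot\phi/(x-y)^2$. Using $(x-y)^2\geq c\sin^2((\theta+\sigma)/2)(\sigma-\theta)^2$ and the substitution $u=(\theta+\sigma)/2$, $v=(\sigma-\theta)/2$ reduces the estimate to explicit one-variable integrals of the type appearing in the bound of $A_n$ in the proof of Proposition \ref{prop:L1genlambda}; coupling those with H\"older's inequality and the $L^q$ integration of the endpoint weight from Lemma \ref{LemInt} shows that the boundary contribution is $o(n)$ for a suitable choice of $\alpha_1$ whenever $\alpha<3/4$. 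The main obstacle is this boundary estimate in the regime $\tfrac12\leq\alpha<\tfrac34$: the pointwise bound (ii) alone would only yield $\alpha<1/2$, and the stated range requires trading the H\"older exponent used against the weight $w^\lambda$ against the diagonal-avoiding constraint $\sigma-\theta\geq 2n^{-\alpha}$, which ultimately pins the admissible range at $\alpha<3/4$.
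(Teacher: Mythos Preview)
Your bulk estimate via Proposition \ref{KerBound}(i) with an auxiliary exponent $\alpha_1<1/2$ is fine and gives $O(n^{\alpha}\log^2 n+n^{2\alpha_1}\log^3 n)=o(n)$ exactly as you say. The gap is in the boundary strips when $\alpha\in[1/2,3/4)$. Using (ii) there, after the Jacobian cancellation you are integrating essentially $|\log|x-y||/(4\sin^2 u\sin^2 v)$ over the strip $\{u+v\in[0,c_1 n^{-\alpha_1}],\;v\ge n^{-\alpha}\}$ in the $(u,v)$ variables; carrying this out gives $O(n^{2\alpha}\log n)$, the same order as the full-region bound, and the thinness $n^{-\alpha_1}$ of the strip does \emph{not} beat the $1/v^2$ singularity enough to improve this. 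Your proposed remedy---a H\"older trade-off involving Lemma \ref{LemInt}---only controls the weight $w^\lambda$ near an endpoint in \emph{one} variable, whereas the obstruction here is the $1/|x-y|^2$ factor along the long $\sigma$-direction of the strip; no choice of H\"older exponent repairs that.

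The paper closes the range $(1/2,3/4)$ by a different, two-scale (bootstrap) decomposition rather than a bulk/edge split. It first proves the lemma for some $\alpha_0\in(0,1/2)$ using (ii) as you do. Then for $\alpha\in(1/2,3/4)$ it writes $T_{n,\alpha}=T_{n,\alpha_0}\cup E\cup F$ (up to symmetry), where $F=\{2n^{-\alpha}\le|\theta-\sigma|\le 2n^{-\alpha_0}\}$ is a thin annulus around the diagonal away from the corners, and $E$ is a small rectangle at the corner. On $F$ one applies (i) with parameter $\alpha$ (not $\alpha_1$): both the Dirichlet term and the error are $O(n^{\alpha}\log n)$ pointwise, but the annulus has angular measure $O(n^{-\alpha_0})$, giving $O(n^{2\alpha-\alpha_0}\log^3 n)$; choosing $\alpha_0$ with $2\alpha-\alpha_0<1$ and $\alpha+\alpha_0>1$ (possible precisely when $\alpha<3/4$) makes this $o(n)$. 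The corner $E$ is dispatched by Lemma \ref{lem:rectangle}, which uses (iii). So the mechanism that produces the threshold $3/4$ is not a H\"older/weight balance but the interplay between the error exponent in (i) and the \emph{width} of the annular region left over after removing $T_{n,\alpha_0}$.
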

\begin{proof} We prove it first for $\alpha<1/2$ and $\lambda\geq 0$. The change of variables
\begin{equation}\label{VarChange}
(u,v)=\left(\frac{\theta-\sigma}{2},\frac{\theta+\sigma}{2}\right)=
\left(\frac{\arccos(x)-\arccos(y)}{2},\frac{\arccos(x)+\arccos(y)}{2}\right)\,,
\end{equation}
transforms $T_{n,\alpha}$ into a triangle $T_{n,\alpha}^*=\{(u,v): u\leq -n^{-\alpha},u+v\geq 0, -u+v\leq \pi\}$, whose vertices are $(-\pi/2,\pi/2)$, $(-n^{-\alpha},n^{-\alpha})$ and $(-n^{-\alpha},\pi-n^{-\alpha})$. Also
$$|x-y|=\left|2\sin((\theta+\sigma)/2)\sin((\theta-\sigma)/2)\right|=|2\sin(v)\sin(u)|\,.$$
Then, first inequality in \emph{(ii)} of Proposition \ref{KerBound} implies that the absolute value of our integral is less than
\begin{align*}
& Q(\lambda)
\iint_{T_{n,\alpha}^*}\log(n)\frac{1}{\sin^2(v)\sin^2(u)}d(u,v)
\leq Q(\lambda)\log(n)\iint_{T_{n,\alpha}^*}\frac{1}{v^2(\pi-v)^2 u^2}
d(u,v)\\
&\leq Q(\lambda)\log(n) n^{2\alpha}\leq o(n)\,,
\end{align*}

so we have proved the lemma in this case. If $\lambda\in(-1/2,0)$ and still $\alpha<1/2$, we consider the subsets of $T_{n,\alpha}$
\begin{align*}
A&=\{(x,y)\in T_{n,\alpha}: |x|\leq 1-n^{-2},|y|\leq 1-n^{-2}\}\,,\\
B&=\{(x,y)\in T_{n,\alpha}: x\geq 1-n^{-2},|y|\leq 1-n^{-2}\}\,,\\
C&=\{(x,y)\in T_{n,\alpha}: x\geq 1-n^{-2},y>1-n^{-2}\}\,,
\end{align*}
so that by symmetry
\begin{align*}
&\left|\iint_{T_{n,\alpha}}\log\frac{1}{|x-y|}
K_n^\lambda(x,y)^2w^{\lambda}(x)w^{\lambda}(y)dxdy\right|
\leq \left|\iint_{A}\log\frac{1}{|x-y|}
K_n^\lambda(x,y)^2w^{\lambda}(x)w^{\lambda}(y)dxdy\right|\\
\quad&+2\left|\iint_{B}\log\frac{1}{|x-y|}
K_n^\lambda(x,y)^2w^{\lambda}(x)w^{\lambda}(y)dxdy\right|
+\left|\iint_{C}\log\frac{1}{|x-y|}
K_n^\lambda(x,y)^2w^{\lambda}(x)w^{\lambda}(y)dxdy\right|\,,
\end{align*}
and we have to prove that these integrals are $o(n)$.
The integral in $A$ can be bounded as in the case $\lambda\geq 0$. The integral in $C$ can be proved to be also $o(n)$ at most by using the same arguments than in Lemma \ref{lem:Cuadrados} (it is even easier since the logarithmic term does not appear). By \emph{(ii)} of Proposition \ref{KerBound}, the absolute value of our integral in $B$ is less than
\begin{align*}
&Q(\lambda)\log(n)\iint_{B}\frac{(n\sin(\theta))^{2\lambda}}{|x-y|^2\sqrt{1-x^2}\sqrt{1-y^2}}dxdy  \leq Q(\lambda)\log(n)n^{2\lambda}n^{4\alpha}\iint_B\frac{w^{\lambda}(x)}{\sqrt{1-y^2}}dxdy\\
&\leq Q(\lambda)\log(n)n^{2\lambda+4\alpha}n^{-2\lambda-1}=Q(\lambda)\log(n)n^{4\alpha-1}=o(n)
\end{align*}
since $\alpha<1/2$. Hence we have proved the lemma for $\alpha\in(0,1/2)$ and any $\lambda>-1/2$.

Suppose now $\alpha\in(1/2,3/4)$ (the case $\alpha=1/2$ can be deduced from this case) and take  $\alpha_0\in(0,1/2)$ such that $\alpha+\alpha_0>1$, $2\alpha-\alpha_0<1$ (observe that these conditions are not void since $\alpha<3/4$). Consider the following subsets of $T_{n,\alpha}$: $T_{n,\alpha_0}$ and
\begin{align*}
E&=\{(x,y)\in T_{n,\alpha}:2n^{-\alpha_0}\geq\arccos y-\arccos x,x\geq 1-n^{-2\alpha}\}\,,\\
F&=\{(x,y)\in T_{n,\alpha}:2n^{-\alpha_0}\geq\arccos y-\arccos x,x\leq 1-n^{-2\alpha},y>-1+n^{-2\alpha}\}\,.
\end{align*}
We have already proved that the integral in $T_{n,\alpha_0}$ is  at most  $o(n)$ and lemma \ref{lem:rectangle} shows that the integral in $E$ is also $o(n)$ at most (because $\alpha+\alpha_0>1$ and $E$ is included in the rectangle $[1-n^{-2\alpha},1] \times [1-3n^{-2\alpha_0},1- n^{-2\alpha}]$). The integral in $F$ can be bounded using \emph{(i)} of Proposition \ref{KerBound}:
\begin{align*}
&\left|\iint_{F}\log\left(\frac{1}{|x-y|}\right)K_n^\lambda(x,y)^2w^\lambda(x)w^\lambda(y)dxdy\right|\\
\leq &Q(\lambda)\log(n)\iint_{F}\left(\left|
\frac{1}{\sin((\theta-\sigma)/2)}\right|
+n^{\alpha}\log (n)\right)^2\frac{1}{\sqrt{1-x^2}\sqrt{1-y^2}}dxdy\\
\leq &Q(\lambda)n^{2\alpha}\log^3(n)\iint_{F}\frac{1}{\sqrt{1-x^2}\sqrt{1-y^2}}dxdy\leq Q(\lambda)n^{2\alpha}\log^3(n)n^{-\alpha_0}=o(n)\,,
\end{align*}
because we have chosen $\alpha_0>2\alpha-1$. Now the lemma follows by gathering these inequalities and taking into account the symmetry.
\end{proof}

%
%
\begin{figure}[h]
	\begin{center}
		\begin{tikzpicture}[=>latex,thick,domain=-11:11,xscale=1,yscale=1,scale=0.5]
		\draw[-,line width= 1.1 pt] (5.5,10) -- (5.5,5.5) -- (10,5.5) ;
		\coordinate [label=right:{$1-5n^{-\alpha}$}] (uno) at (10,5.5) ;
		\coordinate [label=left:{\phantom{$1-5n^{-\alpha}$}}] (dos) at (-10,5.5) ;
		\coordinate [label={$T_{n,\alpha}$}] (tn) at (6,-5);
		\coordinate [label={$D_{n,\alpha}$}] (tn) at (0.5,-0.5);
		\draw[-,line width= 1.1 pt] (-5.5,-10) -- (-5.5,-5.5) -- (-10,-5.5) ;
		\draw[-,line width= 1.1 pt] (-10,-10) -- (10,-10) -- (10,10) -- (-10,10) -- (-10,-10);
		\draw[dotted, line width= 1 pt]  (8,10) .. controls  (0,6) and (-6,0) .. (-10,-8);
		\draw[-,line width= 1.1 pt,postaction={decorate,decoration={text along path,text align=center,raise=-15pt,text={|\normalsize|{${\rm arcos}\, x - {\rm arcos}\, y = 2 n^{-\alpha}$}}}}]  (-8.93,-6) .. controls (-5.65,0)  and (0,5.65) .. (6,8.93);
		\draw[dotted,line width= 1 pt]  (-8,-10) .. controls  (0,-6) and (6,0) .. (10,8);	\draw[-,line width= 1.1 pt,postaction={decorate,decoration={text along path,text align=center,raise=7pt,text={|\normalsize|{${\rm arcos}\, x - {\rm arcos}\, y = -2 n^{-\alpha}$}}}}]  (-6,-8.93) .. controls  (0,-5.65) and (5.65,0) .. (8.93,6);
		\draw[-,line width= 1.1 pt]  (-6,-8.93) .. controls  (-7.5,-8.5) and (-8.5,-7.5) .. (-8.93,-6); %
		\coordinate [label=left:{${\rm arcos}\, x + {\rm arcos}\, y = 2\pi-4 n^{-\alpha}$}] (arco4) at (5,-9);
		\draw[-,line width= 1.1 pt]  (6,8.93) .. controls  (7.5,8.5) and (8.5,7.5) .. (8.93,6); %
		\coordinate [label=left:{${\rm arcos}\, x + {\rm arcos}\, y = 4 n^{-\alpha}$}] (arco3) at (5,9);
		\end{tikzpicture}
	\end{center}
	\caption{Regions on the square $[-1,1]^2$}
	\label{tikz}
\end{figure}
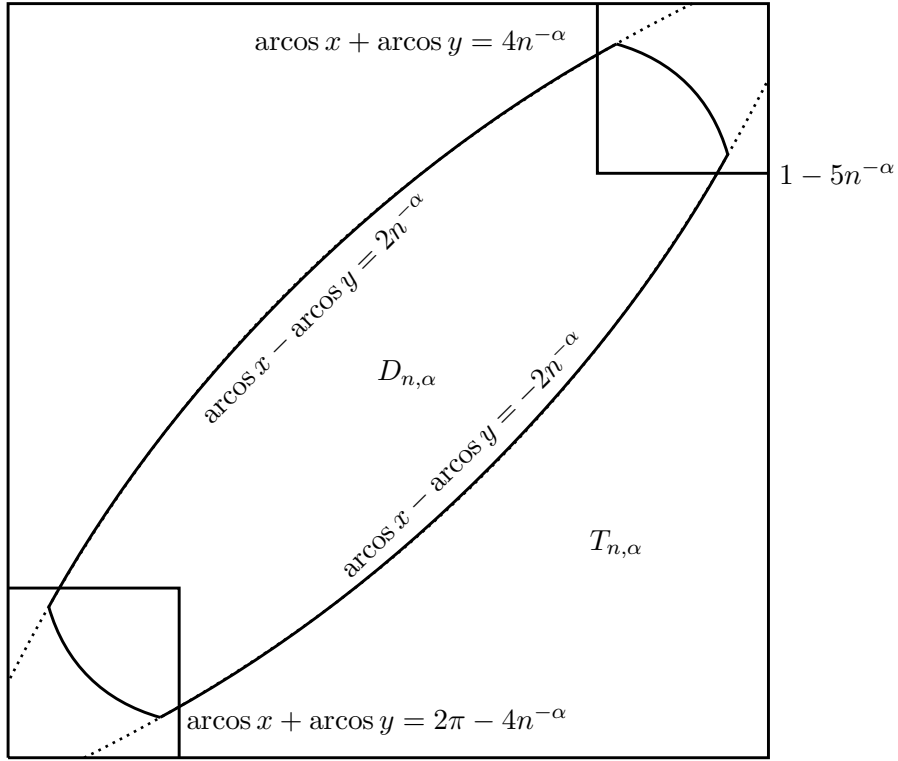

%
%

To finish with the estimation of $L_2$, we need to study this integral over the last remaining region, the diagonal
$$
D_{n,\alpha}=\Big\{(x,y)\in[-1,1]^2 \,:\,
\begin{array}{ll}
|\arccos x - \arccos y|\leq 2 n^{-\alpha}
\\
4 n^{-\alpha}\leq \arccos x + \arccos y\leq 2\pi- 4 n^{-\alpha}
\end{array}
\Big\},
$$
which is where the dominant terms will lie. We will devote the rest of this section to prove the following
\begin{prop}
For $\alpha\in(1/2,1)$ it holds
$$
I_{D_{n,\alpha}}=\iint_{D_{n,\alpha}} \log\frac{1}{|x-y|} K_n^\lambda(x,y)^2  \omega^\lambda(x) \omega^\lambda(y) dxdy = n \log n +(-1+\gamma+2\log 2)n+o(n).
$$	
\end{prop}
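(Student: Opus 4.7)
The plan is to make the change of variables $(u,v)=((\theta-\sigma)/2,(\theta+\sigma)/2)$ with $x=\cos\theta$, $y=\cos\sigma$. This transforms $D_{n,\alpha}$ into the rectangle $D^*_{n,\alpha}=[-n^{-\alpha},n^{-\alpha}]\times[2n^{-\alpha},\pi-2n^{-\alpha}]$, and one has $|x-y|=2|\sin v\sin u|$ together with $dx\,dy=2\sin\theta\sin\sigma\,du\,dv$. Setting $N:=n+\lambda+1/2$, part (i) of Proposition~\ref{KerBound} writes
\[
K_n^\lambda(x,y)\sqrt{w^\lambda(x)w^\lambda(y)}=M(x,y)+R(x,y),
\]
where $M=\sin(2Nu)/(2\pi\sqrt{\sin\theta\sin\sigma}\,\sin u)$ and $|R|\leq Q(\lambda)n^\alpha\log n/\sqrt{\sin\theta\sin\sigma}$ throughout $D_{n,\alpha}$. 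My first step would be to show that the $R^2$ and $2MR$ contributions to $I_{D_{n,\alpha}}$ are $o(n)$. After multiplying by $\log(1/|x-y|)$ and switching to $(u,v)$ coordinates the Jacobian cancels the $1/(\sin\theta\sin\sigma)$ factors, so the $R^2$ contribution is bounded by $O(n^{2\alpha}\log^2 n)\iint_{D^*_{n,\alpha}}|\log|x-y||\,du\,dv=O(n^\alpha\log^3 n)$; likewise, using $|M|\leq Q\min(N,1/|\sin u|)/\sqrt{\sin\theta\sin\sigma}$ and the one-dimensional bound $\int_{-n^{-\alpha}}^{n^{-\alpha}}\min(N,1/|u|)\,|\log|u||\,du=O(\log^2 n)$, the cross term is $O(n^\alpha\log^3 n)$. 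Both are $o(n)$ since $\alpha<1$.

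Consequently, up to $o(n)$, one is left with
\[
\frac{1}{2\pi^2}\iint_{D^*_{n,\alpha}}\frac{\sin^2(2Nu)}{\sin^2 u}\bigl(-\log 2-\log\sin v-\log|\sin u|\bigr)\,du\,dv,
\]
which I would split into three pieces $A$, $B$, $C$ corresponding to the three logarithmic summands. All three involve the Fej\'er-type integral $I_F(n):=\int_{-n^{-\alpha}}^{n^{-\alpha}}\sin^2(2Nu)/\sin^2 u\,du$. Replacing $\sin u$ by $u$ (the discrepancy $1/\sin^2 u-1/u^2$ is bounded, contributing only $O(n^{-\alpha})$) and substituting $s=2Nu$ gives $I_F(n)=2N\int_{-2Nn^{-\alpha}}^{2Nn^{-\alpha}}\sin^2 s/s^2\,ds=2\pi N+o(N)$, using $\int_{\R}\sin^2 s/s^2\,ds=\pi$ and the tail estimate $\int_{|s|>2Nn^{-\alpha}}\sin^2 s/s^2\,ds=O(n^{\alpha-1})$. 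Combined with $\int_0^\pi(-\log\sin v)\,dv=\pi\log 2$ and boundary corrections of size $O(n^{-\alpha}\log n)$, pieces $A$ and $B$ give $-N\log 2+o(N)$ and $+N\log 2+o(N)$ respectively, and therefore cancel to $o(n)$.

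The whole main contribution thus comes from piece $C$. Its $v$-integral is $\pi+o(1)$, and its $u$-integral, after the same replacements and substitution $s=2Nu$, equals
\[
2N\log(2N)\int_{\R}\frac{\sin^2 s}{s^2}\,ds-2N\int_{\R}\frac{\sin^2 s}{s^2}\log|s|\,ds+o(N).
\]
I would then invoke the classical evaluation $\int_{\R}(\sin^2 s/s^2)\log|s|\,ds=\pi(1-\gamma-\log 2)$, obtained by writing $2\sin^2 s=1-\cos(2s)$, integrating by parts, and differentiating $\int_0^\infty s^{\epsilon-1}\sin s\,ds=\Gamma(\epsilon)\sin(\pi\epsilon/2)$ at $\epsilon=0$ to produce $\int_0^\infty(\log s\,\sin s/s)\,ds=-\pi\gamma/2$. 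Together with $\int_{\R}\sin^2 s/s^2\,ds=\pi$, this yields
\[
C=N\log(2N)-N(1-\gamma-\log 2)+o(N)=n\log n+(-1+\gamma+2\log 2)n+o(n),
\]
which matches the claimed asymptotic. I expect the main obstacle to be the error analysis for the kernel substitution, namely the verification that $\iint R^2$ and $\iint MR$ contribute only $o(n)$ for $\alpha$ close to $1$ (this is what forces the assumption $\alpha<1$ and is where the exponent $n^\alpha\log^3 n$ is tight); once these are under control, the Fej\'er-type asymptotics and the classical logarithmic sine integral deliver the subleading coefficient $-1+\gamma+2\log 2$ cleanly.
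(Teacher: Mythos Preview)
Your argument is correct and follows the same overall architecture as the paper: the change of variables $(u,v)$, the replacement of $K_n^\lambda\sqrt{w^\lambda w^\lambda}$ by the Dirichlet-type main term via Proposition~\ref{KerBound}(i), the verification that the $R^2$ and $MR$ terms contribute only $O(n^\alpha\log^3 n)=o(n)$, and the reduction to a one-dimensional Fej\'er-type integral in~$u$.

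The one genuine difference is in how the leading $u$-integral is evaluated. The paper keeps $\log(1/(2\sin v))$ together (its $I_4$), kills it with $\int_0^\pi\log(1/(2\sin v))\,dv=0$, and then for the remaining piece $I_5$ works directly on $[0,n^{-\alpha}]$ using the sine-integral function: an integration by parts produces $\int_0^{n^{-\alpha}}\mathrm{Si}(2(2n+2\lambda+1)u)/u\,du$, whose asymptotic $\tfrac{\pi}{2}\log x+\tfrac{\gamma\pi}{2}+O(x^{-1})$ is established in a separate lemma via a contour argument. You instead rescale $s=2Nu$, extend to the whole line (the tails being $O(n^{\alpha-1})$), and invoke the closed form $\int_{\mathbb R}(\sin^2 s/s^2)\log|s|\,ds=\pi(1-\gamma-\log 2)$, obtained by differentiating $\int_0^\infty s^{\epsilon-1}\sin s\,ds=\Gamma(\epsilon)\sin(\pi\epsilon/2)$ at $\epsilon=0$. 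Your route is shorter and avoids the $\mathrm{Si}$ machinery; the paper's route is more self-contained in that it tracks the finite-range integral explicitly rather than passing through a full-line identity. Both deliver the constant $-1+\gamma+2\log 2$ for the same reason, since the paper's $\mathrm{Si}$-lemma is equivalent to your Mellin evaluation.
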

\begin{proof}
Using Proposition~\ref{KerBound}~\emph{(i)} together with the change of variables \eqref{VarChange} performed in the proof of Lemma~\ref{Lem:T_nalpha}, this integral can be written as
$$
I_{D_{n,\alpha}} = \frac{1}{\pi^2} \iint_{R} \log\frac{1}{2\sin v \sin u} \left[ \frac{\sin\big((2n+2\lambda+1)u\big)}{\sin u} + O\big(n^\alpha \log n\big) \right]^2 dudv,
$$
where the integration region is the rectangle $R=[0, n^{-\alpha}] \times [2n^{-\alpha} , \pi-2n^{-\alpha}]$.
Expanding the squared term, this integral splits into three terms involving three integrals, namely \begin{equation}\label{IntDiag}
I_{D_{n,\alpha}}=I_1 O(n^{2\alpha}\log^2n)+I_2O(n^{\alpha}\log n) + I_3.
\end{equation}
We shall see that the first two terms are $o(n)$ and also that $I_3$ contains all the highest order terms.

We are going to treat each one of these integrals separately. Throughout the proof, we will use a constant $Q$ which is independent on $n$ and might not be the same from one appearing to another.
We start with the first integral
$$
I_1 = \iint_{R} \log\frac{1}{2\sin v\sin u}  dudv,
$$
which can be bounded as
\begin{align*}
|I_1|
& \leq \iint_R \log2 \,  dudv+ \iint_R \log\frac{1}{\sin v}  dudv+ \iint_R \log\frac{1}{\sin u} dudv
\\
& \leq
Q(n^{-\alpha}+ n^{-\alpha}+ n^{-\alpha} \log n )
\leq Q( n^{-\alpha} \log n),
\end{align*}
where we have used that $\log(\sin v)$ is an integrable function on $[0,\pi]$ and also that $\log(1/\sin u)\leq Q \log (1/u)$ for $u\in[0,n^{-\alpha}]$. Then, we directly have that the first term in \eqref{IntDiag} is $o(n)$ since $\alpha<1$.

Now we deal with the second integral
\begin{align*}
I_2 = \iint_{R} \log\frac{1}{2\sin v\sin u} \frac{\sin((2n+2\lambda+1)u)}{\sin u} dudv.
\end{align*}
Observe that the logarithmic term is always positive since $2\sin(v)\sin(u)<1$ on $R$ for $n$ sufficiently large. Then, we can bound
\begin{align*}
|I_2| & \leq \iint_{R} \log\frac{1}{2\sin v} \frac{|\sin((2n+2\lambda+1)u)|}{\sin u} dudv + \iint_{R} \log\frac{1}{\sin u} \frac{|\sin((2n+2\lambda+1)u)|}{\sin u} dudv
\\
& \leq o(1) \int_{0}^{n^{-\alpha}} \frac{|\sin((2n+2\lambda+1)u)|}{\sin u} du + Q \int_{0}^{n^{-\alpha}} \log\frac{1}{\sin u} \frac{|\sin((2n+2\lambda+1)u)|}{\sin u} du,
\end{align*}
where, we have used Lemma~\ref{integ=0} for the estimation of the first term.
Taking into account that we can bound
\begin{equation}\label{bounds1}
\frac{|\sin((2n+2\lambda+1)u)|}{\sin u} \leq Q n, \quad  \log\frac{1}{\sin u} \leq  Q \log\frac{1}{u}, \quad u\in[0,n^{-1}],
\end{equation}
and also
\begin{equation}\label{bounds2}
\frac{|\sin((2n+2\lambda+1)u)|}{\sin u} \leq  Q \frac{1}{u}, \quad  \log\frac{1}{\sin u} \leq  Q \log\frac{1}{u}, \quad u\in[n^{-1}, n^{-\alpha}],
\end{equation}
we split each of the above integrals into these two intervals obtaining
\begin{align*}
|I_2|
& \leq
Q \left(n \int_{0}^{n^{-1}} du
+
 \int_{n^{-1}}^{n^{-\alpha}} \frac{1}{u} du
+
 n \int_{0}^{n^{-1}} \log\frac1u du
+
\int_{n^{-1}}^{n^{-\alpha}} \log\frac{1}{u} \frac{1}{u} du\right)
\\
&\leq
Q \big(1
+
\log n
+
\log n
+
\log^2n \big) \leq Q \log^2n.
\end{align*}
thus, the second term in \eqref{IntDiag} is $o(n)$ since $\alpha<1$.

For the third integral, we split the logarithm factor into two terms and so we can write
\begin{align*}
I_3 &= \frac{1}{\pi^2} \iint_{R} \log\frac{1}{2\sin v\sin u} \frac{\sin^2((2n+2\lambda+1)u)}{\sin^2u} dudv
=I_{4}+I_{5}.
\end{align*}
Let us continue with
\begin{align*}
I_4 &= \frac{1}{\pi^2} \iint_{R} \log\frac{1}{2\sin v} \frac{\sin^2((2n+2\lambda+1)u)}{\sin^2u} dudv
\leq
o(1) \int_{0}^{n^{-\alpha}}\frac{\sin^2((2n+2\lambda+1)u)}{\sin^2u} du,
\end{align*}
where we have used again Lemma~\ref{integ=0}.
Using the same ideas as in $I_2$, we can split the integration interval, and using the bounds \eqref{bounds1} and \eqref{bounds2} we get
\begin{align*}
I_4 &\leq
o(1) \left( \int_{0}^{n^{-1}} n^2 du
+ \int_{n^{-1}}^{n^{-\alpha}}\frac{1}{u^2} du\right) \leq o(1)n = o(n).
\end{align*}

Finally, we will deal with the remaining integral,
\begin{align}
I_5&=\frac{1}{\pi^2} \iint_{R} \log\frac{1}{\sin u} \frac{\sin^2\big((2n+2\lambda+1)u\big)}{\sin^2u} dudv
\\&
=\frac{\pi-4n^{-\alpha}}{\pi^2} \int_{0}^{n^{-\alpha}}\log\frac{1}{\sin u} \frac{\sin^2\big((2n+2\lambda+1)u\big)}{\sin^2u} du.
\label{eq:I4v0}
\end{align}

We are not going to find a bound, since in this case we are interested in getting exactly the highest order term. First, using Taylor expansion of $u/\sin u$ around $0$, we have
$$
\frac{1}{\sin u} = \frac{1}{u}\big(1+O(n^{-2\alpha})\big), \quad u\in[0,n^{-\alpha}],
$$
from where
$$
\frac{1}{\sin^2u} = \frac{1}{u^2}\big(1+O(n^{-2\alpha})\big),
\quad 
\log\frac{1}{\sin u} =
\log\frac{1}{u}+O(n^{-2\alpha}),\quad u\in[0,n^{-\alpha}].
$$
We plug these last two identities in \eqref{eq:I4v0}, getting
\begin{equation}\label{I4-2}
\begin{split}
I_5=&\frac{1}{\pi}\big(1+O(n^{-\alpha})\big)  \int_{0}^{n^{-\alpha}} \log\frac{1}{u} \frac{\sin^2((2n+2\lambda+1)u)}{u^2} du
\\&
+ O(n^{-2\alpha}) \int_{0}^{n^{-\alpha}} \frac{\sin^2\big((2n+2\lambda+1)u\big)}{u^2} du.
\end{split}
\end{equation}
These two integrals in \eqref{I4-2} can be easily computed using the Sine Integral function, defined as
\begin{equation}
\textrm{Si}(z)=\int_{0}^{z}\frac{\sin(t)}{t}dt,
\end{equation}
which has the property
\begin{equation}\label{eq:DSi}
\frac{d}{du} \left( t\,\textrm{Si}(2tu)-\frac{\sin^2(tu)}{u} \right) = \frac{\sin^2(tu)}{u^2}
\end{equation}
and it satisfies the asymptotics (see \cite[(5.2.8), (5.2.34), (5.2.35)]{AS1964})
\begin{equation}\label{SiAsymp}
\textrm{Si}(x) = \frac{\pi}{2}+O\left(\frac{1}{x}\right) \textnormal{ \ as \ } x\to +\infty.
\end{equation}
With these two properties, the integral on the second term of \eqref{I4-2} becomes,
\begin{align}
\int_{0}^{n^{-\alpha}} & \frac{\sin^2 \big((2n+2\lambda+1)u\big)}{u^2} du
\nonumber
\\&
=(2n+2\lambda+1) \, \textrm{Si}\big(2(2n+2\lambda+1) n^{-\alpha}\big)
-\frac{\sin^2\big((2n+2\lambda+1) n^{-\alpha}\big)}{ n^{-\alpha}}
\nonumber
\\&=
(2n+2\lambda+1) \, \big(\pi/2+O(n^{\alpha-1})\big)
+O(n^{\alpha})=n\pi + O(n^{\alpha}).
\label{sin2/u2}
\end{align}

Now, for the integral on the first term of \eqref{I4-2}, we perform integration by parts using \eqref{eq:DSi}, followed by simplification with \eqref{sin2/u2} and the asymptotic of the Sine Function, obtaining
\begin{equation}\label{eq:I4-1}
\begin{split}
\int_{0}^{n^{-\alpha}}  \log\frac{1}{u} & \frac{\sin^2\big((2n+2\lambda+1)u\big)}{u^2}  du = \alpha\pi n\log n - \pi n
\\
&+ (2n+2\lambda+1) \int_0^{n^{-\alpha}}  \frac{\textrm{Si}\big(2(2n+2\lambda+1)u\big)}{u}du
+O(n^\alpha\log n).
\end{split}
\end{equation}
This integral above can be computed using Lemma~\ref{senointegAsymp}, which gives
\begin{equation}\label{lem33}
\int_0^{n^{-\alpha}}  \frac{\textrm{Si}\big(2(2n+2\lambda+1)u\big)}{u}du =  \frac{\pi}{2}\log\big(2(2n+2\lambda+1)n^{-\alpha}\big) + \frac{\gamma \pi}{2} +O(n^{\alpha-1}).
\end{equation}
Then, with \eqref{sin2/u2}, \eqref{eq:I4-1} and \eqref{lem33} plugged into \eqref{I4-2}, and after some straightforward computations we get
\begin{align*}
I_5=& n \log n + (-1+\gamma+2\log2)n+O(n^{1-\alpha}\log n).
\end{align*}

Now the result is proved since $\alpha>1/2$.
\end{proof}

\section{Proof of Corollary \ref{cor:puroymixto}}
Let us consider the process described in the second point of Corollary \ref{cor:puroymixto}.
 The value of the expected energy for this case is clearly $-2\log 2+L_1-L_2+2L_3$ where $L_1$ and $L_2$ are given in \eqref{eq:L1} and \eqref{eq:L2}, and $L_3$ accounts for the energy corresponding to the crossed terms of the DPP with $n+1$ points and the extremes. In other words, $L_3$ is as given in the following result.
\begin{thm}\label{TeoL3}
Let $L_3=L_3(\lambda,n)$ be defined by
\begin{multline*}
L_3=\int_{-1}^1 K_n^\lambda(x,x)\log\left(\frac{1}{1+x}\right)w^{\lambda}(x)dx+\int_{-1}^1 K_n^\lambda(x,x)\log\left(\frac{1}{1-x}\right)w^{\lambda}(x)dx=\\\int_{-1}^1 K_n^\lambda(x,x)\log\left(\frac{1}{1-x^2}\right)w^{\lambda}(x)dx.
\end{multline*}
Then,
\begin{align*}
L_3=&(n+1)\left(\psi(n+\lambda+1)-\psi(\lambda+1/2)\right)\\
&-(n+2\lambda)
\big(\psi(n+\lambda+1/2)-\psi(\lambda+1/2)-2\psi(2n+2\lambda+1)+2\psi(n+2\lambda+1)\big),
\end{align*}
where $\psi$ is the digamma function.
In particular, for any fixed $\lambda\in(-1/2,\infty)$, we have
$$L_3=2n\log(2)-(2\lambda-1)\log n+O(1).
$$
\end{thm}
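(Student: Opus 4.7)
\textbf{Proof proposal for Theorem \ref{TeoL3}.}

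The plan is to decompose $L_3 = \sum_{k=0}^n I_k$ with
$I_k := \int_{-1}^1 \widehat{C}_k^\lambda(x)^2 \log\tfrac{1}{1-x^2}\,w^\lambda(x)\,dx$ (using $K_n^\lambda(x,x) = \sum_{k=0}^n \widehat{C}_k^\lambda(x)^2$) and to obtain each $I_k$ by parameter differentiation. Set
$F_k(\mu) := \int_{-1}^1 \widehat{C}_k^\lambda(x)^2 (1-x^2)^\mu w^\lambda(x)\,dx$, so $I_k = -F_k'(0)$ (differentiation under the integral sign is legitimate for $\lambda>-1/2$). The identity
$(1-x^2)^\mu w^\lambda(x) = \tfrac{\Gamma(\lambda+1)\Gamma(\lambda+\mu+1/2)}{\Gamma(\lambda+1/2)\Gamma(\lambda+\mu+1)} w^{\lambda+\mu}(x)$
then rewrites
$F_k(\mu) = \tfrac{\Gamma(\lambda+1)\Gamma(\lambda+\mu+1/2)}{\Gamma(\lambda+1/2)\Gamma(\lambda+\mu+1)} (\gamma_k^\lambda)^2 \int_{-1}^1 C_k^\lambda(x)^2 w^{\lambda+\mu}(x)\,dx$.

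To evaluate the remaining integral I would use the classical connection formula
\[
C_k^\lambda(x) = \sum_{j=0}^{\lfloor k/2\rfloor} \frac{(-\mu)_j (\lambda)_{k-j}(k-2j+\lambda+\mu)}{j!\,(\lambda+\mu)(\lambda+\mu+1)_{k-j}}\, C_{k-2j}^{\lambda+\mu}(x),
\]
square, and integrate against $w^{\lambda+\mu}$ exploiting orthogonality of the $C_m^{\lambda+\mu}$. For $j\ge 1$ the squared coefficient contains $(-\mu)_j^2=O(\mu^2)$, so only the $j=0$ term contributes to $F_k'(0)$. Logarithmic differentiation of the surviving term, assembling contributions from the Gamma-ratio prefactor and from the Pochhammer symbols $(2\lambda+2\mu)_k$, $(\lambda+\mu+1)_k$, $(\lambda+\mu)$ and $(k+\lambda+\mu)$, yields
\[
I_k = -\psi(\lambda+\tfrac12)-\psi(\lambda+1)-\tfrac{1}{k+\lambda}+\tfrac{1}{\lambda}-2\psi(2\lambda+k)+2\psi(2\lambda)+2\psi(\lambda+k+1).
\]
The individually singular $1/\lambda$ and $\psi(2\lambda)$ terms cancel via $\psi(2\lambda+k)-\psi(2\lambda)=\sum_{j=0}^{k-1}(2\lambda+j)^{-1}$, so the formula extends by continuity to $\lambda=0$.

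Summing over $k=0,\dots,n$ is routine: use $\sum_{k=0}^n(k+\lambda)^{-1}=\psi(n+\lambda+1)-\psi(\lambda)$, and the identity $\sum_{k=0}^n \psi(a+k) = (n+1)\psi(a+n+1)-\sum_{j=0}^{n}(j+1)/(a+j)$ (which follows from iterating $\psi(z+1)=\psi(z)+1/z$) applied at $a=2\lambda$ and $a=\lambda+1$. The result reduces to a linear combination of $\psi(\lambda+1)$, $\psi(\lambda+1/2)$, $\psi(n+\lambda+1)$ and $\psi(2\lambda+n)$ with coefficients that are polynomial in $n$ and $\lambda$. A single application of the Legendre duplication formula $\psi(2z)=\tfrac12\psi(z)+\tfrac12\psi(z+\tfrac12)+\log 2$ at $z=\lambda$ (to eliminate $\psi(2\lambda)$ in favour of $\psi(\lambda+1/2)$ and $\psi(\lambda+1)$) and at $z=n+\lambda+\tfrac12$ (to introduce $\psi(2n+2\lambda+1)$ in place of $\psi(n+\lambda+1)$ and $\psi(n+\lambda+1/2)$) rewrites the answer in the exact form stated in the theorem. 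The asymptotic $L_3=2n\log 2-(2\lambda-1)\log n+O(1)$ then follows immediately from $\psi(z)=\log z+O(1/z)$ applied term by term.

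The main obstacle is verifying that only the $j=0$ term of the connection formula contributes to $F_k'(0)$ (this is what makes the closed form so clean) together with the algebraic bookkeeping required to pass from the raw expression for $I_k$ to the particular digamma combination displayed in the theorem; in particular, the cancellation of the $1/\lambda$ and $\psi(2\lambda)$ singularities must be tracked explicitly. No delicate analytic estimate is needed---modulo the known connection formula and the duplication formula, the proof is a careful manipulation of $\Gamma$-function identities.
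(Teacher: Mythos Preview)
Your approach is correct and genuinely different from the paper's. The paper differentiates the one--sided weight $(1-x)^s$ at $s=0$, invoking the Gradshteyn--Ryzhik formula 7.314(7) to express $\int C_k^\lambda(x)^2(1-x)^{s+\lambda-1/2}(1+x)^{\lambda-1/2}dx$ as a terminating ${}_4F_3$--type sum, then carefully extracts the $s\to 0$ limit by splitting into two pieces $A(s)+B(s)/s$ (Lemma~\ref{lem:auxL3}); the resulting summand already contains $\psi(2\lambda+2k)$ and $\psi(\lambda+k+1/2)$, and the final identity is checked by induction on $n$. You instead differentiate the symmetric weight $(1-x^2)^\mu$ and exploit the Gegenbauer connection formula; your key observation that the coefficients for $j\ge 1$ carry a factor $(-\mu)_j^2=O(\mu^2)$ isolates the single $j=0$ term immediately, bypassing the delicate limit the paper performs. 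The price is that your $I_k$ is expressed in terms of $\psi(2\lambda+k)$, $\psi(\lambda+k+1)$, $\psi(2\lambda)$ and $1/\lambda$, so you need Legendre duplication (at $z=\lambda$ and at $z=n+\lambda+\tfrac12$) to match the particular digamma combination of the statement, whereas the paper lands closer to it directly. Your treatment of the $\lambda=0$ case by continuity---tracking the cancellation $\tfrac{1}{\lambda}+2\psi(2\lambda)\to -2\gamma$---is cleaner than the paper's separate computation via Chebyshev polynomials. Overall your route is at least as transparent, and arguably more so, than the paper's; just be aware that the ``routine'' summation step does require writing $\sum_j (j+1)/(a+j)$ as $(n+1)+(1-a)\bigl(\psi(a+n+1)-\psi(a)\bigr)$ and then carefully collecting the various digamma shifts, so the bookkeeping, while elementary, is not negligible.
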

We prove this theorem later. First, let us finish the proof of Corollary \ref{cor:puroymixto}. From Theorem \ref{th:main2}, the energy of the first process in the corollary (generate $n+3$ points with the DPP) is
\begin{multline}\label{eq:aux1}
L_1(\lambda,n+2)-L_2(\lambda,n+2)=\\(n+3)^2\log2-(n+3)\log (n+3)+(1-\gamma-2\log 2)(n+3)+o(n).
\end{multline}
From the same theorem and from Theorem \ref{TeoL3}, the energy of the second process described in Corollary \ref{cor:puroymixto} is
\begin{multline}\label{eq:aux2}
-2\log 2+L_1(\lambda,n)-L_2(\lambda,n)+2L_3(\lambda,n)=\\(n+1)^2\log2-(n+1)\log (n+1)+(1-\gamma-2\log 2)(n+1)+4n\log(2)+o(n).
\end{multline}
It is an easy exercise to check that \eqref{eq:aux1} and \eqref{eq:aux2} describe the same asymptotics up to $o(n)$, as claimed by the corollary. In the case $\lambda=0$ we have exact values for $L_1,L_2$ and $L_3$ so we can compare directly the expresions
\begin{itemize}
\item $L_1(0,n+2)-L_2(0,n+2)$, and
\item $-2\log2+L_1(0,n)-L_2(0,n)+2L_3(0,n)$.
\end{itemize}
It is straightforward to see that the first process has smaller energy than the second one, and that the difference is in the $O(\log n)$ term. This finishes the proof of Corollary \ref{cor:puroymixto} and it only remains to prove Theorem \ref{TeoL3}.

\subsection{Proof of Theorem \ref{TeoL3}}
Note that
\begin{equation}\label{eq:L3comosuma}
L_3=-2\sum_{k=0}^n\int_{-1}^1\widehat{C}^\lambda_k(x)^2w^\lambda(x)\log(1-x)\,dx.
\end{equation}
The case $\lambda=0$ can be done directly. We first compute:
\begin{lem}\label{lem:auxn}
Let $\lambda=0$. Then,
	\[
	\sum_{k=0}^n\int_{-1}^1\widehat{C}^0_k(x)^2w^0(x)\log\frac{1}{\sqrt{2-2x}}\,dx=\frac{H_n}{4}.
	\]
\end{lem}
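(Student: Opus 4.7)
The plan is to reduce the integrals to trigonometric ones already evaluated in Lemma \ref{lem:integralestabla}. Setting $x=\cos\theta$ with $\theta\in[0,\pi]$ turns the weight into $w^0(x)\,dx = d\theta/\pi$, converts the polynomials to $\widehat{C}_k^0(\cos\theta)=\sqrt{2}\cos(k\theta)$ for $k\geq1$ (with $\widehat C_0^0=1$), and transforms $\sqrt{2-2x}=2\sin(\theta/2)$. Because the integrand is even in $\theta$, every integral will be written as one over $[-\pi,\pi]$ of the form $\frac{1}{2\pi}\int_{-\pi}^{\pi}\cos(m\alpha)\log\frac{1}{\sqrt{2-2\cos\alpha}}\,d\alpha$ so that Lemma \ref{lem:integralestabla} applies.

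For the $k=0$ term, the integral becomes $\frac{1}{2\pi}\int_{-\pi}^{\pi}\log\frac{1}{\sqrt{2-2\cos\alpha}}\,d\alpha$, which vanishes (case $m=0$ of Lemma \ref{lem:integralestabla}, or equivalently $\int_0^\pi\log(2\sin(\theta/2))\,d\theta=0$). For $k\geq1$ the integrand carries a factor $2\cos^2(k\theta)=1+\cos(2k\theta)$; the constant part contributes zero by the same computation, and the remaining piece reads
\[
\frac{1}{2\pi}\int_{-\pi}^{\pi}\cos(2k\alpha)\log\frac{1}{\sqrt{2-2\cos\alpha}}\,d\alpha=\frac{1}{4k},
\]
again from Lemma \ref{lem:integralestabla} (this is exactly the identity invoked in the proof of Proposition \ref{prop:integral} to obtain $\mathcal{J}_{k,k}$).

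Summing over $k$ yields $\sum_{k=1}^{n}\frac{1}{4k}=\frac{H_n}{4}$, which is the claim. There is essentially no obstacle here beyond bookkeeping; the content of the lemma is just assembling the diagonal $k=\ell$ slice of Proposition \ref{prop:integral} in a slightly different form (with a single log of $\sqrt{2-2x}$ rather than the double integral of $\log(2|x-y|)$), and the trigonometric evaluation is the same Fourier identity $-\log(2\sin(\theta/2))=\sum_{n\geq1}\cos(n\theta)/n$ that underlies Lemma \ref{lem:integralestabla}.
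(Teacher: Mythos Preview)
Your proof is correct and follows essentially the same route as the paper: substitute $x=\cos\theta$, extend the integral to $[-\pi,\pi]$ by evenness, and invoke Lemma~\ref{lem:integralestabla} (together with Lemma~\ref{lem:basica0} for the $k=0$ term). The only cosmetic difference is that you expand $2\cos^2(k\theta)=1+\cos(2k\theta)$ and apply the first identity of Lemma~\ref{lem:integralestabla}, whereas the paper cites its second identity for $\cos^2(k\alpha)$ directly; since that second identity is itself proved by the same expansion, the arguments are effectively identical.
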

\begin{proof}
	With the change of variables $x=\cos\theta$, the integral of the lemma becomes
	\begin{multline*}
	\frac1{2\pi}\sum_{k=0}^n\int_{-\pi}^{\pi}\widehat{C}^0_k(\cos\theta)^2\log\frac{1}{\sqrt{2-2\cos\theta}}\,d\theta=\\\frac1{2\pi}\int_{-\pi}^{\pi}\log\frac{1}{\sqrt{2-2\cos\theta}}\,d\theta+\sum_{k=1}^n\frac1{\pi}\int_{-\pi}^{\pi}\cos^2(k\theta)\log\frac{1}{\sqrt{2-2\cos\theta}}\,d\theta=\frac{H_n}{4},
	\end{multline*}
the last from Lemma \ref{lem:integralestabla}.
\end{proof}

	Note that, for $\lambda=0$
\begin{align*}
	-2\log(1-x)=2\log 2+4\log\frac{1}{\sqrt{2-2x}},
\end{align*}
and combine \eqref{eq:L3comosuma} with Lemma \ref{lem:auxn} to get
\[
L_3=2(n+1)\log2+H_n.
\]
It is easy to check that this equals the expresion in Theorem \ref{TeoL3} (use \cite[6.3.8]{AS1964}).
\begin{flushright}
{$\square$}
\end{flushright}
Now we point at the case $\lambda\neq0$. We will use the following result.
\begin{lem}\label{lem:auxL3previo}
Let $k\geq0$ be an integer. Let $s\in[0,1)$ and $\lambda\in(-1/2,0)\cup(0,\infty)$. Then,
\begin{align*}
\int_{-1}^1C^\lambda_k(x)^2(1-x^2)^{\lambda-1/2}(1-x)^s\,dx=&\frac{2^{s+2\lambda}\Gamma(s+\lambda+1/2)\Gamma(\lambda+1/2)\Gamma(k-s)\Gamma(k+2\lambda)^2}{(k!)^2\Gamma(-s)\Gamma(2\lambda+s+k+1)\Gamma(2\lambda)^2}\times\\
&\sum_{\ell=0}^k\frac{(-k)_\ell(k+2\lambda)_\ell(s+\lambda+1/2)_\ell(s+1)_\ell}{\ell!(\lambda+1/2)_\ell(2\lambda+s+k+1)_\ell(s-k+1)_\ell},
\end{align*}
where if $s=0$ the formula is defined by continuation and gives the (expected) value
\[
\int_{-1}^1C^\lambda_k(x)^2(1-x^2)^{\lambda-1/2}\,dx=\frac{\pi2^{1-2\lambda}\Gamma(2\lambda+k)}{k!(k+\lambda)\Gamma(\lambda)^2}.
\]
\end{lem}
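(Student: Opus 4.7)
My plan is to reduce the integral to a Beta-type integral of the square of a ${}_2F_1$, expand and integrate termwise, and collapse the resulting double sum into the claimed ${}_4F_3$ by Pfaff--Saalsch\"utz.

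First I would perform the substitution $x=1-2t$. The weight factor $(1-x^2)^{\lambda-1/2}(1-x)^s$ becomes $2^{2\lambda-1+s}\,t^{\lambda+s-1/2}(1-t)^{\lambda-1/2}$, and the standard representation $C_k^\lambda(1-2t)=\frac{(2\lambda)_k}{k!}\,{}_2F_1(-k,k+2\lambda;\lambda+1/2;t)$ turns the integral into
\[
\frac{2^{2\lambda+s}(2\lambda)_k^2}{(k!)^2}\int_0^1 \!\bigl[{}_2F_1(-k, k+2\lambda; \lambda+1/2; t)\bigr]^2 t^{\lambda+s-1/2}(1-t)^{\lambda-1/2}\,dt,
\]
so the problem is reduced to a Beta-type integral against the square of a terminating hypergeometric series.

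Next, I would expand the square as a double power series in $t$ and integrate term by term using the Beta integral $\int_0^1 t^{a-1}(1-t)^{b-1}dt=\Gamma(a)\Gamma(b)/\Gamma(a+b)$. Applying the Pochhammer split
\[
\frac{(\lambda+1/2+s)_{j+m}}{(2\lambda+1+s)_{j+m}}=\frac{(\lambda+1/2+s)_j\,(\lambda+1/2+s+j)_m}{(2\lambda+1+s)_j\,(2\lambda+1+s+j)_m},
\]
I would pull the $j$-sum outside so that the inner sum over $m$ appears as ${}_3F_2\bigl(-k,\,k+2\lambda,\,\lambda+1/2+s+j;\,\lambda+1/2,\,2\lambda+1+s+j;\,1\bigr)$. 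A direct check shows this ${}_3F_2$ is Saalsch\"utzian, and Pfaff--Saalsch\"utz evaluates it in closed form as $\dfrac{(1+s+j-k)_k\,(\lambda+1/2)_k}{(2\lambda+1+s+j)_k\,(1/2-\lambda-k)_k}$.

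The $j$-dependence of this closed form is stripped off using $(a+j)_k=(a)_k(a+k)_j/(a)_j$ applied to $a=1+s-k$ and $a=2\lambda+1+s$; the residual sum over $j$ is exactly the ${}_4F_3$ at $1$ stated in the lemma. Finally, the reflection identities $(1+s-k)_k=(-1)^k(-s)_k$ and $(1/2-\lambda-k)_k=(-1)^k(\lambda+1/2)_k$ (both consequences of $\Gamma(a)\Gamma(1-a)=\pi/\sin\pi a$) cancel the spurious signs and the stray $(\lambda+1/2)_k$, and rewriting every Pochhammer as a ratio of Gammas recovers the prefactor claimed in the statement. For the $s\to0$ limit the prefactor acquires a simple zero from $1/\Gamma(-s)$, the $\ell=k$ term of the ${}_4F_3$ acquires a matching simple pole from $(s-k+1)_k$, and their product gives the classical Gegenbauer normalization constant.

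The main obstacle will be the hypergeometric bookkeeping: arranging the inner ${}_3F_2$ into a form to which Pfaff--Saalsch\"utz applies, verifying the balancing condition, and then performing the reflection/shift manipulations on Pochhammer symbols so that the prefactor assembles into the compact Gamma-function expression of the lemma. Once that is in place, the rest of the argument is essentially algebraic.
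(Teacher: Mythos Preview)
Your proposal is correct and carries out a full derivation of the identity. The paper, by contrast, does not derive anything: it simply rewrites the integrand as $C_k^\lambda(x)^2(1-x)^{s+\lambda-1/2}(1+x)^{\lambda-1/2}$ and quotes the closed form from Gradshteyn--Ryzhik, formula~7.314(7) (noting that the table entry contains a typo). So the two approaches are genuinely different in scope: the paper's ``proof'' is a one-line citation of a classical table, while your argument re-proves that table entry from scratch via the substitution $x=1-2t$, the Beta integral, and Pfaff--Saalsch\"utz. Your route is longer but fully self-contained and has the pedagogical advantage of explaining \emph{why} the answer is a terminating ${}_4F_3$, whereas the paper's route is immediate but depends on an external reference that, as the authors themselves warn, is misprinted. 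Either way, the result is the same and your bookkeeping (the balancing check, the reflection identities $(1+s-k)_k=(-1)^k(-s)_k$ and $(1/2-\lambda-k)_k=(-1)^k(\lambda+1/2)_k$, and the $s\to0$ limit via the pole of $(s-k+1)_k$ against the zero of $1/\Gamma(-s)$) is all in order.
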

\begin{proof}
We rewrite our integral as
\[
\int_{-1}^1C^\lambda_k(x)^2(1-x)^{s+\lambda-1/2}(1+x)^{\lambda-1/2}\,dx,
\]
and find the value of this last integral in \cite[7.314(7)]{GR2015} {(but note there is a typo in that reference!)}:
\end{proof}

\begin{lem}\label{lem:auxL3}
The following equality holds for all $\lambda\in(-1/2,0)\cup(0,\infty)$  and integer $k\geq0$:
\begin{multline*}
\int_{-1}^1C^\lambda_k(x)^2(1-x^2)^{\lambda-1/2}\log(1-x)\,dx=\frac{\pi2^{1-2\lambda}\Gamma(2\lambda+k)}{k!(k+\lambda)\Gamma(\lambda)^2}\times
\\\left(-2\psi(2\lambda+2k)+\psi(2\lambda+k)+\log2 +\psi(\lambda+k+1/2)-\frac{1}{2k+2\lambda}\right).
\end{multline*}
In other words (from the definition of $\widehat C_k^\lambda$ and $w^\lambda$),
\begin{multline*}
\int_{-1}^1\widehat C^\lambda_k(x)^2w^\lambda(x)\log(1-x)\,dx=\\
\\\left(-2\psi(2\lambda+2k)+\psi(2\lambda+k)+\log2 +\psi(\lambda+k+1/2)-\frac{1}{2k+2\lambda}\right).
\end{multline*}
\end{lem}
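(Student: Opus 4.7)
The approach is parametric differentiation. Introduce
$$F(s) = \int_{-1}^1 C_k^\lambda(x)^2 (1-x^2)^{\lambda-1/2}(1-x)^s\,dx,$$
so that $F'(0)$ is the integral we want, because $\partial_s(1-x)^s|_{s=0} = \log(1-x)$ and differentiation under the integral is justified on a real neighborhood of $0$ by a dominated-convergence bound of the form $|(1-x)^s\log(1-x)| \leq C(1-x)^{-\epsilon}$ for any small $\epsilon>0$ compatible with the exponent $\lambda-1/2$. Lemma \ref{lem:auxL3previo} supplies a closed form $F(s) = A(s)\,H(s)$, where $A(s)$ is the Gamma-function prefactor and $H(s)$ is the terminating ${}_4F_3$ sum at unity.

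The main obstacle is that, for $k \geq 1$, both factors are singular at $s = 0$: the factor $1/\Gamma(-s)$ inside $A(s)$ produces a simple zero, while the $\ell = k$ term of $H(s)$ has a simple pole coming from $(s-k+1)_k \sim (-1)^{k-1}(k-1)!\,s$. A naive logarithmic derivative is therefore ill-defined. My plan is to write $A(s) = s\,\alpha(s)$ with $\alpha$ regular and nonvanishing at $0$, and to split $H(s) = H_<(s) + \beta(s)/s$, where $H_<(s) = \sum_{\ell=0}^{k-1}$ is the regular part and $\beta(s)/s$ encodes the $\ell=k$ term after pulling out the offending factor of $s$. Then
$$F(s) = s\,\alpha(s)\,H_<(s) + \alpha(s)\,\beta(s),$$
which is manifestly analytic at $0$. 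Using $A(0)=0$, differentiation gives
$$\frac{F'(0)}{F(0)} = \frac{H_<(0)}{\beta(0)} + \frac{\alpha'(0)}{\alpha(0)} + \frac{\beta'(0)}{\beta(0)},$$
and $F(0) = \alpha(0)\beta(0)$ must agree with the value in Lemma \ref{lem:auxL3previo} (a good sanity check).

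The rest is routine. The two logarithmic derivatives are sums of $\psi$-values, obtained from $(\log\Gamma(z+s))' = \psi(z+s)$, plus finite harmonic tails from the polynomial factors $(j-s)$ that remain in $\alpha$ and from $(s-k+1)(s-k+2)\cdots(s-1)$ remaining in $\beta$; after combining them and tidying shifts with $\psi(z+1) = \psi(z)+1/z$, the finite harmonic pieces coming from $\alpha$ cancel one of the pieces coming from $\beta$, leaving
$$\frac{\alpha'(0)}{\alpha(0)} + \frac{\beta'(0)}{\beta(0)} = \log 2 + \psi(\lambda+k+\tfrac12) + H_k - \psi(2\lambda+2k) - \frac{1}{2\lambda+2k}.$$
For $H_<(0)/\beta(0)$, straightforward Pochhammer manipulations reduce the $\ell$-th summand to $(2\lambda+2k)/[(k-\ell)(2\lambda+k+\ell)]$; partial fractions together with $\sum_{j=0}^{k-1}1/(a+j) = \psi(a+k) - \psi(a)$ then give $-H_k - \psi(2\lambda+2k) + \psi(2\lambda+k)$. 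Adding the three contributions, the $H_k$'s cancel, a second copy of $-\psi(2\lambda+2k)$ appears, and the bracketed $\Theta_k$ of the lemma emerges; multiplication by $F(0)$ (identified via Lemma \ref{lem:auxL3previo}) concludes the $k \geq 1$ case. The case $k = 0$ carries no singularity (the ${}_4F_3$ reduces to $1$) and can be handled separately by differentiating the Beta integral $\int_{-1}^1(1-x)^{\lambda+s-1/2}(1+x)^{\lambda-1/2}\,dx$ in $s$ at $s=0$, where $\psi(2\lambda+1) = \psi(2\lambda) + 1/(2\lambda)$ reconciles the two forms.
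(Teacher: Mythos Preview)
Your approach is essentially the same as the paper's: both differentiate the parametric integral $F(s)$ of Lemma~\ref{lem:auxL3previo} at $s=0$, isolate the $\ell=k$ term (which carries the pole of $H(s)$ cancelling the zero of $1/\Gamma(-s)$), and evaluate the resulting sum over $\ell<k$ via partial fractions to obtain the $H_k+\psi(2\lambda+2k)-\psi(2\lambda+k)$ contribution, with the $H_k$ cancelling against the harmonic pieces from the $\ell=k$ derivative. The only cosmetic difference is that the paper works with the difference quotient $(I_s-I_0)/s$ and computes the two pieces $\lim A(s)$ and $B'(0)$ directly, whereas you normalize by $F(0)$ first and assemble $F'(0)/F(0)$ from three logarithmic derivatives; the intermediate identities and the final combination are identical.
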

\begin{proof}
Let $I_{\mathrm log}$ be the integral of the lemma and let $I_s$ be the integral of Lemma \ref{lem:auxL3previo}. We first claim that
\begin{equation}\label{eq:limite}
I_{\mathrm log}=\lim_{s\to0}\frac{I_s-I_0}{s},
\end{equation}
which is an application of Lebesgue's dominated convergence theorem. Indeed, let $f_s(x)=C_k^{\lambda}(x)^2(1-x^2)^{\lambda-1/2}(1-x)^s$ be the integrand of Lemma \ref{lem:auxL3previo}, and note that for all $x\in(-1,1)$ and $k\geq1$,
\[
\frac{f_{s}(x)-f_0(x)}{s}=C_k^{\lambda}(x)^2(1-x^2)^{\lambda-1/2}\frac{(1-x)^{s}-1}{s}.
\]
It is then a simple calculus exercise to conclude that for $x\in(-1,1)$ and $s\in(0,1/2)$:
\[
\left|\frac{f_{s}(x)-f_0(x)}{s}\right|\leq
\left|C_k^{\lambda}(x)^2(1-x^2)^{\lambda-1/2}\right|\left(|x|+\left|\log\frac{1}{1-x}\right|\right)
\]
which gives us an integrable function which is an uniform bound independent of $s$. We can therefore interchange the order of the integral and the limit getting \eqref{eq:limite}.
We finally compute this limit using te expresion for $I_s$ and $I_0$ given in Lemma \ref{lem:auxL3previo}. Note that $(I_s-I_0)/s=A(s)+B(s)/s$ where
\begin{align*}
A=&\frac{2^{s+2\lambda}\Gamma(s+\lambda+1/2)\Gamma(\lambda+1/2)\Gamma(k-s)\Gamma(k+2\lambda)^2}{(k!)^2s\Gamma(-s)\Gamma(2\lambda+s+k+1)\Gamma(2\lambda)^2}\times\\
&\sum_{\ell=0}^{k-1}\frac{(-k)_\ell(k+2\lambda)_\ell(s+\lambda+1/2)_\ell(s+1)_\ell}{\ell!(\lambda+1/2)_\ell(2\lambda+s+k+1)_\ell(s-k+1)_\ell},
\\
B=&\frac{2^{s+2\lambda}\Gamma(s+\lambda+1/2)\Gamma(\lambda+1/2)\Gamma(k-s)\Gamma(k+2\lambda)^2}{(k!)^2\Gamma(-s)\Gamma(2\lambda+s+k+1)\Gamma(2\lambda)^2}\times\\
&\frac{(-k)_k(k+2\lambda)_k(s+\lambda+1/2)_k(s+1)_k}{k!(\lambda+1/2)_k(2\lambda+s+k+1)_k(s-k+1)_k}-\frac{\pi2^{1-2\lambda}\Gamma(2\lambda+k)}{k!(k+\lambda)\Gamma(\lambda)^2}.
\end{align*}
It is clear that
\begin{align*}
\lim_{s\to0}A(s)=&-\frac{2^{2\lambda}\Gamma(\lambda+1/2)^2\Gamma(k)\Gamma(k+2\lambda)^2}{(k!)^2\Gamma(2\lambda+k+1)\Gamma(2\lambda)^2}\sum_{\ell=0}^{k-1}\frac{k(k+2\lambda)}{(2\lambda+k+\ell)(k-\ell)}\\
=&-\frac{2^{2\lambda}\Gamma(\lambda+1/2)^2\Gamma(k+2\lambda)}{k!\Gamma(2\lambda)^2}\sum_{\ell=0}^{k-1}\frac{1}{(2\lambda+k+\ell)(k-\ell)}
\\
=&-\frac{\pi2^{1-2\lambda}\Gamma(k+2\lambda)}{k!\Gamma(\lambda)^2(k+\lambda)}\sum_{\ell=0}^{k-1}\left(\frac{1}{2\lambda+k+\ell}+\frac{1}{k-\ell}\right)
\\
=&-\frac{\pi2^{1-2\lambda}\Gamma(k+2\lambda)}{k!\Gamma(\lambda)^2(k+\lambda)}\left(H_k+\psi(2\lambda+2k)-\psi(2\lambda+k)\right),
\end{align*}
where we have used Legendre's duplication formula (see  \cite[6.1.18]{AS1964}) and the definition of $\psi$. On the other hand, using $\Gamma(-s)=-\Gamma(1-s)/s$ and simplifying,
\begin{align*}
B(s)=
&\frac{\pi2^{2-2\lambda+s}\Gamma(k+2\lambda)\Gamma(2k+2\lambda)\Gamma(s+\lambda+1/2+k)\Gamma(s+k+1)}{(k!)^2\Gamma(\lambda)^2\Gamma(\lambda+1/2+k)\Gamma(2\lambda+s+2k+1)\Gamma(s+1)}-\frac{\pi2^{1-2\lambda}\Gamma(2\lambda+k)}{k!(k+\lambda)\Gamma(\lambda)^2},
\end{align*}
that shows that $B(0)=0$ and gives a formula which is well defined for $s\geq0$, thus allowing us to compute the derivative directly from the formula: $\lim_{s\to0}B(s)/s=B'(0)$:
\begin{align*}
B'(0)=\frac{\pi2^{1-2\lambda}\Gamma(2\lambda+k)}{k!(k+\lambda)\Gamma(\lambda)^2}\left(\log2 +\psi(\lambda+k+1/2)+\psi(k+1)-\psi(2\lambda+2k+1)-\psi(1)\right).
\end{align*}
The lemma is now proved.
%

\end{proof}
\subsubsection{Proof of Theorem \ref{TeoL3} for $\lambda\neq0$}
We are now ready to prove our theorem. From \eqref{eq:L3comosuma} and Lemma \ref{lem:auxL3} we have
\[
L_3(\lambda,n)=-2\sum_{k=0}^n\left(-2\psi(2\lambda+2k)+\psi(2\lambda+k)+\log2 +\psi(\lambda+k+1/2)-\frac{1}{2k+2\lambda}\right).
\]
We have to see that this is equal to the expresion in Theorem \ref{TeoL3}, which we do by induction on $n$. The case $n=0$ reduces to  \cite[6.3.8]{AS1964}. Moreover, by induction hypotheses we need to check that
\[
4\psi(2\lambda+2n)-2\psi(2\lambda+n)-2\log2 -2\psi(\lambda+n+1/2)+\frac{1}{n+\lambda}=A+B,
\]
where
\begin{align*}
A=&(n+1)\left(\psi(n+\lambda+1)-\psi(\lambda+1/2)\right)-n\left(\psi(n+\lambda)-\psi(\lambda+1/2)\right)
\\
B=&(n+2\lambda-1)
\big(\psi(n+\lambda-1/2)-\psi(\lambda+1/2)-2\psi(2n+2\lambda-1)+2\psi(n+2\lambda)\big)\\
&-(n+2\lambda)
\big(\psi(n+\lambda+1/2)-\psi(\lambda+1/2)-2\psi(2n+2\lambda+1)+2\psi(n+2\lambda+1)\big).
\end{align*}
This is a simple yet tedious exercise using that $\psi(z+1)=\psi(z)+1/z$ and  \cite[6.3.8]{AS1964}.
\appendix

\section{Auxiliary results}
We have used some technical results that we include here for the reader's convenience.

\begin{lem}\label{LemaHiperFactorial}
$$\sum_{j=1}^{n}j\log(j)
=\frac{1}{2}n^2\log n -\frac{1}{4}n^2+\frac{1}{2}n\log n+\frac{1}{12}\log n+O(1)\,.$$
\end{lem}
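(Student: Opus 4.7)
The plan is to apply the Euler--Maclaurin summation formula to the function $f(x)=x\log x$ on the interval $[1,n]$. Recall that Euler--Maclaurin gives, for a smooth function $f$ and any integer $m\geq 1$,
\[
\sum_{j=1}^n f(j) = \int_1^n f(x)\,dx + \frac{f(1)+f(n)}{2} + \sum_{k=1}^m \frac{B_{2k}}{(2k)!}\bigl(f^{(2k-1)}(n)-f^{(2k-1)}(1)\bigr) + R_m(n),
\]
where $B_{2k}$ are the Bernoulli numbers and $R_m(n)$ is a remainder involving the $(2m)$--th derivative of $f$.

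First I would compute the leading integral by a single integration by parts:
\[
\int_1^n x\log x\,dx = \frac{n^2}{2}\log n - \frac{n^2}{4} + \frac14.
\]
Then the boundary term is $(f(1)+f(n))/2 = \frac{n\log n}{2}$, since $f(1)=0$. For the first Bernoulli correction, observe that $f'(x)=\log x + 1$, hence $f'(n)-f'(1)=\log n$, and with $B_2=\tfrac16$ this contributes $\tfrac{1}{12}\log n$. Collecting these three pieces already reproduces every term displayed in the statement of the lemma.

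The remaining work is to show that all higher-order Bernoulli terms together with the tail remainder contribute only $O(1)$. Since $f''(x)=1/x$ and more generally $f^{(k)}(x) = (-1)^k (k-2)!\, x^{-(k-1)}$ for $k\geq 2$, the values $f^{(2k-1)}(n)-f^{(2k-1)}(1)$ are each bounded as $n\to\infty$ (they tend to finite limits), and the standard bound $|R_m(n)|\leq C_m \int_1^n |f^{(2m)}(x)|\,dx$ yields a uniformly bounded remainder as well. Thus all contributions from $k\geq 2$ and from $R_m$ lump into an $O(1)$ term, completing the proof.

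The main (and only mild) obstacle is the bookkeeping of this remainder; it is not conceptually deep, but care is needed to verify that the partial sums of the Bernoulli series, together with $R_m(n)$, remain bounded as $n\to\infty$. The alternative is to quote the classical asymptotic expansion of the Glaisher--Kinkelin hyperfactorial $H(n)=\prod_{j=1}^n j^j$, whose logarithm is precisely $\sum_{j=1}^n j\log j$ and whose expansion agrees term by term with the claim (with the $O(1)$ constant being $\tfrac{1}{12}-\log A$, where $A$ is the Glaisher--Kinkelin constant); this reference would give an instant proof if preferred.
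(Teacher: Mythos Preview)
Your Euler--Maclaurin argument is correct. Taking $m=2$ suffices: the $k=1$ correction produces the $\tfrac{1}{12}\log n$ term, the $k=2$ correction involves $f'''(n)-f'''(1)=-n^{-2}+1=O(1)$, and since $f^{(4)}(x)=2x^{-3}$ the remainder bound $|R_2(n)|\le C\int_1^n |f^{(4)}(x)|\,dx$ is uniformly bounded in $n$. So the bookkeeping you flag as the only obstacle is in fact trivial once $m$ is fixed.

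The paper takes a different route: it rewrites $\sum_{j=1}^n j\log j$ as $n\log\Gamma(n+1)-\log G(n+1)$, where $G$ is the Barnes $G$-function, and then quotes the known asymptotic expansion of $\log G(z+1)$ from the literature. This is precisely the ``alternative'' you mention at the end (the Barnes $G$-function and the hyperfactorial are tied by the identity $\prod_j j^j = (n!)^n/G(n+1)$), so the paper's proof is essentially your second suggestion rather than your first. Your Euler--Maclaurin approach is more self-contained and elementary, requiring no special-function asymptotics beyond standard calculus; the paper's approach is shorter on the page but outsources the analytic work to a reference. Both are perfectly valid, and your derivation has the advantage of making transparent where each term in the expansion originates.
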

\begin{proof}
Let
\begin{align*}
S_n&:=\sum_{j=1}^{n}j\log(j)=\log\left(\prod_{j=1}^{n}j^j\right)
=\log\left(\prod_{j=1}^{n}\prod_{k=1}^j j\right)
=\log\left(\prod_{k=1}^{n}\prod_{j=k}^{n}j\right)
=\log\left(\prod_{k=1}^{n}\frac{n!}{(k-1)!}\right)\\
&=n\log(n!)-\log\left(\prod_{k=1}^{n}(k-1)!\right)
=n\log\Gamma(n+1)-\log G(n+1)\,,
\end{align*}
where $G(n)=(n-2)!(n-3)!\dots 1!$ is Barnes $G$-function, also called the double gamma function. The asymptotics of $G(z)$ for $z\to+\infty$ is known (see \cite[Theorem 1]{FL2001} and note the typo in \cite[5.17.5]{NIST}):
$$\log(G(z+1))=\frac{1}{4}z^2+z\log\Gamma(z+1)-\frac{1}{2}z^2\log z-\frac{1}{2}z\log z-\frac{1}{12}\log z+O(1).$$
We thus have proved that
\begin{align*}
S_n&=n\log\Gamma(n+1)-\left(\frac{1}{4}n^2+n\log\Gamma(n+1)-\frac{1}{2}n^2\log n-\frac{1}{2}n\log n-\frac{1}{12}\log n+O(1)\right)\\
&=\frac{1}{2}n^2\log(n)-\frac{1}{4}n^2+\frac{1}{2}n\log(n)+\frac{1}{12}\log(n)+O(1).
\end{align*}
\end{proof}

\begin{lem}\label{lem:basica0}
	We have
	\[
	\int_{-\pi}^{\pi} \log\frac1{\sqrt{2-2\cos\alpha}}\,d \alpha=0.
	\]
\end{lem}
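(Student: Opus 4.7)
The plan is to exploit the classical half-angle identity $2-2\cos\alpha = 4\sin^2(\alpha/2)$, which immediately gives $\sqrt{2-2\cos\alpha} = 2|\sin(\alpha/2)|$. Thus
\[
\int_{-\pi}^{\pi}\log\frac{1}{\sqrt{2-2\cos\alpha}}\,d\alpha
= -\int_{-\pi}^{\pi}\log\bigl(2|\sin(\alpha/2)|\bigr)\,d\alpha
= -4\int_{0}^{\pi/2}\log(2\sin\beta)\,d\beta,
\]
where in the last step I use the evenness of the integrand and substitute $\beta = \alpha/2$. From here the result reduces to the classical Euler integral $\int_0^{\pi/2}\log(\sin\beta)\,d\beta = -\tfrac{\pi}{2}\log 2$, which gives $\int_0^{\pi/2}\log(2\sin\beta)\,d\beta = 0$ and finishes the proof.

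An equally quick alternative, which I would probably prefer to record for elegance, is to observe that $|1-e^{i\alpha}|^2 = 2-2\cos\alpha$, so the integrand equals $-\log|1-e^{i\alpha}|$. Jensen's formula applied to the holomorphic function $f(z)=1-z$ on the closed unit disk (it has no zeros in the interior and $f(0)=1$) yields
\[
\frac{1}{2\pi}\int_{-\pi}^{\pi}\log|1-e^{i\alpha}|\,d\alpha = \log|f(0)| = 0,
\]
from which the lemma follows at once. This interpretation also clarifies why the constant is precisely $0$: it is the logarithmic capacity of the unit disk, equivalently the Robin constant associated with the equilibrium measure that appeared earlier in Section 4.

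There is no real obstacle here; the only mild point to be careful about is the integrability of $\log|\sin(\alpha/2)|$ near $\alpha=0$, which is standard since $\int_0^1 \log t\,dt$ converges. I would choose the Jensen-formula route for the writeup because it is a single line and it ties the identity to the potential-theoretic framework already used in the paper.
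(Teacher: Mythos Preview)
Your proof is correct. Both routes you outline are valid; the only technicality worth a word is that in the Jensen--formula argument $f(z)=1-z$ has its zero on the boundary circle, so one either invokes the extension of Jensen's formula to that case or passes to the limit $r\to1^-$, which is immediate since $\log|1-re^{i\alpha}|$ is uniformly integrable.

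The paper's proof is different in flavour: it simply shifts the interval to $[0,2\pi]$, changes variables by $\alpha=2\pi x$, and invokes a tabulated integral (Gradshteyn--Ryzhik 4.384(3)). Your approach is more self--contained: the half--angle reduction to Euler's integral $\int_0^{\pi/2}\log\sin\beta\,d\beta=-\tfrac{\pi}{2}\log2$ avoids any external reference, and the Jensen--formula variant has the added virtue of connecting the identity to the potential--theoretic language (Robin constant, equilibrium measure) used elsewhere in the paper. Either of your arguments would make a cleaner replacement for the citation.
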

\begin{proof}
First, translate the integration interval to $[0, 2\pi]$ and after that combine the change of variables $\alpha=2\pi x$ with \cite[4.384 (3)]{GR2015}.
\end{proof}
\begin{lem}\label{lem:tecnico}
	Let $f:[-1,1]^2\to\R$ be a continuous function. Then,
	\begin{multline*}
	\int_{[-1,1]^2}\frac{f(x,y)}{\sqrt{1-x^2}\sqrt{1-y^2}}\log\frac{1}{2|x-y|}\,d(x,y)=\\\frac12\int_{-\pi}^{\pi}\,d \alpha \log\frac1{\sqrt{2-2\cos\alpha}}\int_{-\pi}^{\pi}f\left(\cos\theta,\cos(\theta+\alpha))\right)\,d\theta.
	\end{multline*}
	In particular, from Lemma \ref{lem:basica0}, if $\int_{-\pi}^{\pi}f\left(\cos\theta,\cos(\theta+\alpha))\right)\,d\theta$ is constant (i.e. if its value does not depend on $\alpha$) then the integral of the lemma is $0$.
\end{lem}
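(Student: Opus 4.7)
The plan is to pass to trigonometric coordinates $x=\cos\theta$, $y=\cos\sigma$ with $\theta,\sigma\in[0,\pi]$. Since $\mathrm{d}x/\sqrt{1-x^2}=-\mathrm{d}\theta$ (and likewise for $y$), the weights disappear and we reduce to
\[
L=\int_0^\pi\!\!\int_0^\pi f(\cos\theta,\cos\sigma)\log\frac{1}{2|\cos\theta-\cos\sigma|}\,\mathrm{d}\theta\,\mathrm{d}\sigma.
\]
The key identity is the factorization
\[
(\cos\theta-\cos\sigma)^2=(1-\cos(\theta-\sigma))(1-\cos(\theta+\sigma)),
\]
which one verifies by expanding using $\cos 2t=2\cos^2 t-1$ and $\cos(\theta\pm\sigma)=\cos\theta\cos\sigma\mp\sin\theta\sin\sigma$. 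Equivalently, $2|\cos\theta-\cos\sigma|=\sqrt{2-2\cos(\theta-\sigma)}\sqrt{2-2\cos(\theta+\sigma)}$, so the logarithm splits into a sum of two pieces depending on $\theta-\sigma$ and on $\theta+\sigma$ respectively.

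Next I would extend the integration to the square $[-\pi,\pi]^2$. Because the integrand depends on $(\theta,\sigma)$ only through the even quantities $\cos\theta$, $\cos\sigma$, $\cos(\theta\pm\sigma)$, this extension produces exactly a factor of $4$. On the extended square, the change of variables $\sigma\mapsto-\sigma$ interchanges the two halves of the split logarithm while fixing the rest of the integrand; hence both halves contribute equally and
\[
4L=2\int_{-\pi}^{\pi}\!\!\int_{-\pi}^{\pi}f(\cos\theta,\cos\sigma)\log\frac{1}{\sqrt{2-2\cos(\sigma-\theta)}}\,\mathrm{d}\theta\,\mathrm{d}\sigma.
\]

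Finally, in the inner integral I substitute $\alpha=\sigma-\theta$ (with $\theta$ fixed). The integrand is $2\pi$-periodic in $\alpha$, so the nominal range $\alpha\in[-\pi-\theta,\pi-\theta]$ may be replaced by $[-\pi,\pi]$. Swapping the order of integration gives
\[
4L=2\int_{-\pi}^{\pi}\log\frac{1}{\sqrt{2-2\cos\alpha}}\left(\int_{-\pi}^{\pi}f(\cos\theta,\cos(\theta+\alpha))\,\mathrm{d}\theta\right)\mathrm{d}\alpha,
\]
which upon dividing by $4$ is precisely the claimed formula. The only non-formal point is the use of Fubini in the presence of the logarithmic singularity at $\alpha=0$, but this is harmless since $f$ is bounded on the compact $[-1,1]^2$ and $\log(1/\sqrt{2-2\cos\alpha})$ is locally integrable on $[-\pi,\pi]$. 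The final assertion of the lemma is then immediate from Lemma \ref{lem:basica0}, since if the inner integral is independent of $\alpha$ it pulls out of the outer integral, which vanishes.
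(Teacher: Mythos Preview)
Your proof is correct and follows essentially the same route as the paper's: pass to angular variables, factor $2|\cos\theta-\cos\sigma|$ as a product of a term in $\theta-\sigma$ and one in $\theta+\sigma$, extend by parity to $[-\pi,\pi]^2$, use the reflection $\sigma\mapsto-\sigma$ to identify the two halves, and finally shift $\alpha=\sigma-\theta$ using periodicity. The paper phrases the factorization as $2|\cos\theta-\cos\phi|=|e^{i\theta}-e^{i\phi}|\,|e^{i\theta}-e^{-i\phi}|$ and the final shift as the rotation $z\mapsto e^{-i\theta}z$ on $S^1$, but these are the same identities written in complex rather than real notation.
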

\begin{proof}
	Denote by $I$ the integral in the lemma. The change of variables $x=\cos\theta$, $y=\cos\phi$ yields
	\begin{align*}
	4I=&4\int_{[0,\pi]^2} f(\cos\theta,\cos\phi)\log\frac{1}{2|\cos\theta-\cos\phi|}\,d(\theta,\phi)\\=&\int_{[-\pi,\pi]^2} f(\cos\theta,\cos\phi)\log\frac{1}{2|\cos\theta-\cos\phi|}\,d(\theta,\phi)\\=&\int_{[-\pi,\pi]^2} f(\cos\theta,\cos\phi)\log\frac{1}{\left|e^{i\theta}-e^{i\phi}\right|}\,d(\theta,\phi)\\&+\int_{[-\pi,\pi]^2} f(\cos\theta,\cos\phi)\log\frac{1}{\left|e^{i\theta}-e^{-i\phi}\right|}\,d(\theta,\phi),
	\end{align*}
where we have used the following classical fact:
	\[
	2|\cos\theta-\cos\phi|=\left|e^{i\theta}-e^{i\phi}\right|\,\left|e^{i\theta}-e^{-i\phi}\right|
	\]
	The change of variables $\phi\to-\phi$ shows that the two last integrals above are equal and hence we have
	\[
	2I=\int_{[-\pi,\pi]^2} f(\cos\theta,\cos\phi)\log\frac{1}{\left|e^{i\theta}-e^{i\phi}\right|}\,d(\theta,\phi)
	\]
	We write this last expression as an integral in the product $[-\pi,\pi]\times S^1$, where $S^1$ is the unit circle, getting
	\begin{align*}
	2I=&\int_{-\pi}^{\pi}d\theta\int_{z\in S^1}f(\cos\theta,Re(z))\log\frac1{\left|e^{i\theta}-z\right|}\, \frac{dz}{i z}\\
	=&\int_{-\pi}^{\pi}d\theta\int_{w\in S^1}f\left(\cos\theta,Re(e^{i\theta }w)\right)\log\frac1{\left|e^{i\theta}-e^{i\theta}w\right|}\,\frac{dw}{iw}\\
	=&\int_{w\in S^1}\,\frac{d w}{i w} \log\frac1{\left|1-w\right|}\int_{-\pi}^{\pi}f\left(\cos\theta,Re(e^{i\theta }w)\right)\,d\theta.
	\end{align*}
	(we have applied the isometry $S^1\to S^1$ given by $z\to w=e^{-i\theta}z$). Parametrizing again the unit circle by $w=e^{i\alpha}$ we get to
	\begin{align*}
	2I=&\int_{\alpha\in [-\pi,\pi]}\,d \alpha \log\frac1{\sqrt{2-2\cos\alpha}}\int_{-\pi}^{\pi}f\left(\cos\theta,\cos(\theta+\alpha)\right)\,d\theta,
	\end{align*}
	proving the lemma.
\end{proof}
\begin{lem}\label{lem:integralestabla}
	For any integer $k\geq1$ we have
	\[
	\frac1\pi\int_{-\pi}^{\pi}\cos(k\alpha)\log\frac{1}{\sqrt{2-2\cos\alpha}}\,d\alpha=\frac1k,\quad 	\frac1\pi\int_{-\pi}^{\pi}\cos^2(k\alpha)\log\frac{1}{\sqrt{2-2\cos\alpha}}\,d\alpha=\frac{1}{4k}.
	\]
	Moreover, for integers $k>\ell\geq1$ we have
	\[
	\frac1\pi\int_{-\pi}^{\pi}\cos(k\alpha)\cos(\ell\alpha)\log\frac{1}{\sqrt{2-2\cos\alpha}}\,d\alpha=\frac{1}{2}\left(\frac{1}{k-\ell}+\frac{1}{k+\ell}\right).
	\]
\end{lem}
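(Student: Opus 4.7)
The plan is to reduce all three identities to a single classical Fourier expansion, namely
\[
\log\frac{1}{\sqrt{2-2\cos\alpha}} \;=\; \sum_{n=1}^{\infty}\frac{\cos(n\alpha)}{n},
\]
understood as convergence in $L^2([-\pi,\pi])$. This expansion follows by taking the real part of the boundary value $\log(1-e^{i\alpha})=-\sum_{n\geq 1}e^{in\alpha}/n$ combined with $|1-e^{i\alpha}|=\sqrt{2-2\cos\alpha}$; the series converges in $L^2$ since $\sum 1/n^2<\infty$, and the left-hand side is square-integrable because its only singularity (at $\alpha=0$) is logarithmic.

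Granting this expansion, each identity drops out of the orthogonality relation $\frac{1}{\pi}\int_{-\pi}^{\pi}\cos(m\alpha)\cos(n\alpha)\,d\alpha=\delta_{m,n}$ for $m,n\geq 1$, combined with $\int_{-\pi}^{\pi}\cos(n\alpha)\,d\alpha=0$ for $n\geq 1$. The first identity is immediate: multiplying the Fourier series by $\cos(k\alpha)$ and integrating term by term selects the $n=k$ coefficient $1/k$. For the second, I would use $\cos^2(k\alpha)=\tfrac{1}{2}(1+\cos(2k\alpha))$; the constant $\tfrac12$ contributes $0$ because the expansion has no constant term, while $\tfrac12\cos(2k\alpha)$ picks up $\tfrac12\cdot\tfrac{1}{2k}=\tfrac{1}{4k}$. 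For the third, use $\cos(k\alpha)\cos(\ell\alpha)=\tfrac12[\cos((k-\ell)\alpha)+\cos((k+\ell)\alpha)]$; since $k>\ell\geq 1$ forces $k-\ell\geq 1$ and $k+\ell\geq 2$, both pieces fall into the regime of the first identity and yield $\tfrac12(1/(k-\ell)+1/(k+\ell))$.

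The only delicate point is the justification of interchanging sum and integral, which is the main (but very minor) obstacle. It follows from the $L^2$ convergence of the Fourier series: integration against the bounded test function $\cos(k\alpha)\cos(\ell\alpha)$ is a continuous linear functional on $L^2([-\pi,\pi])$, so Parseval's theorem applies directly. If a self-contained route avoiding Fourier theory is preferred, one can prove the first identity alone by integration by parts on $I_k=\int_{-\pi}^{\pi}\cos(k\alpha)\log(1/\sqrt{2-2\cos\alpha})\,d\alpha$, taking $v=\sin(k\alpha)/k$. The boundary terms vanish because $\sin(\pm k\pi)=0$ and the logarithm is finite at $\pm\pi$, giving
\[
I_k=\frac{1}{2k}\int_{-\pi}^{\pi}\sin(k\alpha)\cot(\alpha/2)\,d\alpha.
\]
The product-to-sum identity $2\sin(k\alpha)\cos(\alpha/2)=\sin((2k+1)\alpha/2)+\sin((2k-1)\alpha/2)$ rewrites the integrand as $\tfrac12[D_k(\alpha)+D_{k-1}(\alpha)]$ with $D_n$ the Dirichlet kernel, and since $\int_{-\pi}^{\pi}D_n=2\pi$ one gets $I_k=\pi/k$. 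The other two identities then follow from this one by exactly the same product-to-sum reductions as above.
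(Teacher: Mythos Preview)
Your proof is correct. The reduction of the second and third identities to the first via the double-angle and product-to-sum formulas is exactly what the paper does (the paper phrases it as an $x\pm y$ trick pairing $\cos^2$ with $\sin^2$, and $\cos\cos$ with $\sin\sin$, but this is the same computation). Where you differ is in the first identity: the paper simply quotes a table entry in Gradshteyn--Ryzhik after the substitution $\alpha=2\pi x$, whereas you give two independent derivations --- the Fourier expansion $\log(1/\sqrt{2-2\cos\alpha})=\sum_{n\geq1}\cos(n\alpha)/n$, and an integration-by-parts argument that reduces to the Dirichlet kernel. Both of your routes are self-contained and make the lemma independent of an external reference; the Fourier route also makes the three identities visibly instances of a single orthogonality computation, which is conceptually cleaner than the paper's case-by-case treatment.
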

\begin{proof}
	For the first integral, we proceed as in Lemma \ref{lem:basica0}
	\[
	2\int_0^{1}\cos(2k\pi x)\log\frac{1}{2\sin (\pi x)}\stackrel{\text{\cite[4.384 (3)]{GR2015}}}{=}\frac1k.
	\]
For the second integral we consider
		\begin{align*}
	x=&\frac1\pi\int_{-\pi}^{\pi}\cos^2(k\alpha)\log\frac{1}{\sqrt{2-2\cos\alpha}}\,d\alpha,\\
	y=&\frac1\pi\int_{-\pi}^{\pi}\sin^2(k\alpha)\log\frac{1}{\sqrt{2-2\cos\alpha}}\,d\alpha,
	\end{align*}
and we note that
			\begin{align*}
	x+y=&\frac1\pi\int_{-\pi}^{\pi}\log\frac{1}{\sqrt{2-2\cos\alpha}}\,d\alpha=0,\\
	x-y=&\frac1\pi\int_{-\pi}^{\pi}\cos(2k\alpha)\log\frac{1}{\sqrt{2-2\cos\alpha}}\,d\alpha=\frac{1}{2k},
	\end{align*}
	and adding these two equalities gives the desired result. For the last claim we similarly consider
	\begin{align*}
	x=&\frac1\pi\int_{-\pi}^{\pi}\cos(k\alpha)\cos(\ell\alpha)\log\frac{1}{\sqrt{2-2\cos\alpha}}\,d\alpha,\\
	y=&\frac1\pi\int_{-\pi}^{\pi}\sin(k\alpha)\sin(\ell\alpha)\log\frac{1}{\sqrt{2-2\cos\alpha}}\,d\alpha,
	\end{align*}
	which readily gives
		\begin{align*}
	x+y=&\frac1\pi\int_{-\pi}^{\pi}\cos((k-\ell)\alpha)\log\frac{1}{\sqrt{2-2\cos\alpha}}\,d\alpha=\frac{1}{k-\ell},\\
	x-y=&\frac1\pi\int_{-\pi}^{\pi}\cos((k+\ell)\alpha)\log\frac{1}{\sqrt{2-2\cos\alpha}}\,d\alpha=\frac{1}{k+\ell},
	\end{align*}
	and again adding these equalities gives the last integral of the lemma.
\end{proof}

\begin{lem}\label{lem:harmonicsum}
	For $n\geq2$,
	\[
	\sum_{k=2}^nH_{2k-1}=nH_{2n-1}+\frac{H_{2n}}{2}-\frac{H_n}{4}-n-\frac12
	\]
\end{lem}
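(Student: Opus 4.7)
The plan is to derive the identity from the classical telescoping formula
\[
\sum_{k=1}^{N} H_k = (N+1)H_N - N,
\]
applied at $N=2n$, via a parity splitting of the index $k$. Concretely, I would split
\[
\sum_{k=1}^{2n} H_k = \sum_{m=1}^{n} H_{2m-1} + \sum_{m=1}^{n} H_{2m}
\]
and then eliminate the even-indexed sum using $H_{2m}=H_{2m-1}+\frac{1}{2m}$, which yields $\sum_{m=1}^{n}H_{2m}=\sum_{m=1}^{n}H_{2m-1}+\frac{H_n}{2}$. Combining these pieces gives
\[
2\sum_{m=1}^{n}H_{2m-1}+\frac{H_n}{2}=(2n+1)H_{2n}-2n,
\]
so that
\[
\sum_{m=1}^{n}H_{2m-1}=\frac{(2n+1)H_{2n}}{2}-n-\frac{H_n}{4}.
\]

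To obtain the lemma's statement, I would subtract the $m=1$ term, $H_1=1$, to move the summation index up to $m=2$, and then rewrite the right-hand side in terms of $H_{2n-1}$ using $H_{2n}=H_{2n-1}+\frac{1}{2n}$. Substituting $\frac{(2n+1)H_{2n}}{2}=nH_{2n}+\frac{H_{2n}}{2}=nH_{2n-1}+\tfrac{1}{2}+\frac{H_{2n}}{2}$ produces exactly $nH_{2n-1}+\frac{H_{2n}}{2}-\frac{H_n}{4}-n-\frac{1}{2}$, as claimed.

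There is no real obstacle here; the argument is essentially a clean bookkeeping exercise once the standard identity $\sum_{k=1}^N H_k=(N+1)H_N-N$ (which itself follows from a one-line swap of summation order: $\sum_{k=1}^N H_k=\sum_{k=1}^N\sum_{j=1}^k\frac{1}{j}=\sum_{j=1}^N\frac{N-j+1}{j}$) is in hand. The only place where one needs to be careful is matching the two different ways of expressing the answer — either in terms of $H_{2n}$ (which drops out naturally from the telescoping identity) or in terms of $H_{2n-1}$ (as the lemma is phrased) — and this is settled by the single substitution $H_{2n}=H_{2n-1}+\frac{1}{2n}$.
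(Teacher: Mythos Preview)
Your argument is correct. The paper, by contrast, does not give a derivation at all: it simply says the identity is ``an easy exercise of induction.'' So your approach is genuinely different and in fact more informative. The induction the paper has in mind amounts to checking the base case $n=2$ and then verifying that adding $H_{2n+1}$ to both sides is consistent, using $H_{2n+1}=H_{2n-1}+\frac{1}{2n}+\frac{1}{2n+1}$ together with the obvious updates of $H_{2n}$, $H_n$; this is routine but tells you nothing about where the closed form comes from. Your route via the classical identity $\sum_{k=1}^{N}H_k=(N+1)H_N-N$ and the parity split actually \emph{derives} the formula, and makes transparent why the particular combination $nH_{2n-1}+\tfrac{1}{2}H_{2n}-\tfrac{1}{4}H_n$ appears. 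Either method is perfectly adequate for a lemma of this kind; yours has the advantage of being constructive rather than merely confirmatory.
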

\begin{proof}
This is an easy exercise of induction.
\end{proof}

\begin{lem}\label{lem:trigb}
The following equality holds:
\[
\sum_{k=0}^n\cos^2\left(k\theta+\alpha\right)=\frac{n}{2}+\frac{O(1)}{\sin\theta}=\frac{n+1}{2}+\frac{O(1)}{\sin\theta}.
\]
\end{lem}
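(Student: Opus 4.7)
The plan is to apply the half-angle identity and then bound the resulting oscillatory sum by a standard geometric series argument.

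First I would use $\cos^2(k\theta+\alpha) = \tfrac{1}{2} + \tfrac{1}{2}\cos(2k\theta+2\alpha)$ to rewrite
\[
\sum_{k=0}^n \cos^2(k\theta+\alpha) = \frac{n+1}{2} + \frac{1}{2}\sum_{k=0}^n \cos(2k\theta+2\alpha).
\]
This already separates the main term $\frac{n+1}{2}$ from an oscillatory remainder, and reduces the problem to bounding the remainder by $O(1)/\sin\theta$.

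Next, I would express the remaining sum as the real part of a geometric series:
\[
\sum_{k=0}^n \cos(2k\theta+2\alpha) = \operatorname{Re}\!\left(e^{2i\alpha}\sum_{k=0}^n e^{2ik\theta}\right) = \operatorname{Re}\!\left(e^{2i\alpha}\,\frac{e^{2i(n+1)\theta}-1}{e^{2i\theta}-1}\right),
\]
valid when $e^{2i\theta}\neq 1$. The numerator has absolute value at most $2$, and the denominator satisfies $|e^{2i\theta}-1| = 2|\sin\theta|$, so the whole expression is bounded in modulus by $1/|\sin\theta|$. This immediately gives
\[
\sum_{k=0}^n \cos(2k\theta+2\alpha) = \frac{O(1)}{\sin\theta},
\]
and substituting back produces the identity with the $\frac{n+1}{2}$ main term. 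The alternative form with $\frac{n}{2}$ follows because the discrepancy of $\frac{1}{2}$ is bounded by $\frac{1}{2|\sin\theta|}$ (since $|\sin\theta|\le 1$) and can therefore be absorbed into the $O(1)/\sin\theta$ remainder.

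There is no real obstacle here: the estimate is elementary and the only point to be slightly careful about is that the constant implicit in $O(1)$ is genuinely absolute (independent of $\theta$, $\alpha$ and $n$), which is transparent from the bound $|e^{2i(n+1)\theta}-1|\le 2$.
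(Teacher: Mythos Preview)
Your proof is correct. The paper takes a slightly more roundabout route: it first expands $\cos(k\theta+\alpha)$ via the addition formula, squares, and splits into three sums $A,B,C$ involving $\sum_k\cos^2(k\theta)$, $\sum_k\cos(k\theta)\sin(k\theta)$ and $\sum_k\sin^2(k\theta)$, then cites closed-form evaluations of these from Gradshteyn--Ryzhik to read off the main term $n/2$ and the $O(1)/\sin\theta$ remainder. Your approach applies the half-angle identity directly to the full argument $k\theta+\alpha$, and then bounds the single cosine sum by the geometric-series estimate $|e^{2i(n+1)\theta}-1|/|e^{2i\theta}-1|\le 1/|\sin\theta|$. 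This is more self-contained (no table lookups) and makes the uniformity of the $O(1)$ constant in $n,\theta,\alpha$ immediately visible; the paper's version, on the other hand, produces an explicit closed form for the sum rather than just a bound. Both arrive at the same conclusion with no essential difficulty.
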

\begin{proof}
From the double angle formulas, the sum in the lemma is equal to $A-B+C$ where
\begin{align*}
A=&\cos^2(\alpha)\sum_{k=0}^n\cos^2(k\theta),\\
B=&2\cos(\alpha)\sin(\alpha)\sum_{k=0}^n\cos(k\theta)\sin(k\theta)=\frac{\sin(2\alpha)}{2}\sum_{k=0}^n\sin(2k\theta),\\
C=&\sin^2(\alpha)\sum_{k=0}^n\sin^2(k\theta).
\end{align*}
These three sums are known, see \cite[Sec. 1.34, 1.35]{GR2015} yielding the following value for the sum in the lemma:
\[
\cos^2(\alpha)+\frac{n}{2}+\cos(2\alpha)\frac{\cos((n+1)\theta)\sin(n\theta)}{2\sin\theta}-\frac{\sin(2\alpha)}{2}\frac{\sin\left((n+1)\theta\right)\sin\left(n\theta\right)}{\sin\theta}.
\]
We are done.
\end{proof}

\begin{lem}\label{lem:SumCos}
The following identities hold:
$$\displaystyle \sum_{k=0}^n\cos(ak+b)=\frac{1}{2\sin(a/2)}\left(\sin\left(an+\frac{a}{2}+b\right)-\sin\left(b-\frac{a}{2}\right)\right)\,,$$
\begin{align*}
\sum_{k=0}^n\cos(ak+b)\cos(ck+d)&=
\frac{1}{4\sin\left(\frac{a+c}{2}\right)}
\left(\sin\left((a+c)n+\frac{a+c}{2}+b+d\right)+\sin\left(\frac{a+c}{2}-b-d\right)\right)\\
&\quad+\frac{1}{4\sin\left(\frac{a-c}{2}\right)}
\left(\sin\left((a-c)n+\frac{a-c}{2}+b-d\right)+\sin\left(\frac{a-c}{2}-b+d\right)\right)
\end{align*}
\end{lem}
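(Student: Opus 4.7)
The plan is to prove the second identity by reducing it to the first via the product-to-sum formula, and to prove the first identity by the standard telescoping trick using the sine double-angle identity.

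For the first sum, I would multiply both sides by $2\sin(a/2)$ and apply the identity
\[
2\sin(a/2)\cos(ak+b) = \sin\!\left(ak+b+\tfrac{a}{2}\right)-\sin\!\left(ak+b-\tfrac{a}{2}\right),
\]
which follows immediately from $\sin(X+Y)-\sin(X-Y)=2\cos X\sin Y$ with $X=ak+b$, $Y=a/2$. Summing from $k=0$ to $n$ the right-hand side telescopes: consecutive terms cancel because $ak+b+a/2 = a(k+1)+b-a/2$, leaving only the final term $\sin(an+b+a/2)$ and minus the initial term $\sin(b-a/2)$. Dividing by $2\sin(a/2)$ yields the claimed closed form (this requires $\sin(a/2)\neq 0$; the degenerate case is immediate since the sum is just $(n+1)\cos b$).

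For the second sum, I would apply the product-to-sum identity
\[
\cos(ak+b)\cos(ck+d)=\tfrac{1}{2}\cos\!\big((a+c)k+(b+d)\big)+\tfrac{1}{2}\cos\!\big((a-c)k+(b-d)\big),
\]
so that
\[
\sum_{k=0}^n\cos(ak+b)\cos(ck+d)=\tfrac{1}{2}\sum_{k=0}^n\cos\!\big((a+c)k+(b+d)\big)+\tfrac{1}{2}\sum_{k=0}^n\cos\!\big((a-c)k+(b-d)\big).
\]
Applying the first identity with parameters $(a+c,b+d)$ and $(a-c,b-d)$ respectively gives precisely the two terms on the right-hand side of the claim.

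There is no serious obstacle here; the only mild point of care is bookkeeping the shifts $\pm (a\pm c)/2$ in the arguments of the sines and verifying the signs. The degenerate cases $\sin((a\pm c)/2)=0$ should be handled separately (the relevant sum becomes $(n+1)\cos(b\pm d)$), but since the lemma is used in the paper only for parameter ranges where these denominators are nonzero, the generic statement suffices.
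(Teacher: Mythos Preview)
Your proposal is correct and follows essentially the same route as the paper: the second identity is obtained by applying the product-to-sum formula for cosines and then invoking the first identity with parameters $(a\pm c,\,b\pm d)$. The only cosmetic difference is that for the first identity the paper simply cites the closed form from Gradshteyn--Ryzhik [1.341.3] (plus a product-to-sum rewrite), whereas you supply the standard telescoping proof directly; both yield the same result with no additional difficulty.
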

\begin{proof}
First identity is direct consequence of \cite[1.341.3]{GR2015} and an identity for the product of a pair of trigonometric functions.
Second identity is consequence of a formula for the product of two trigonometric functions, and first identity.
\end{proof}

\begin{lem}\label{integ=0} The following identity holds:
$$\int_{0}^{\pi} \log\frac{1}{2\sin v} dv=0.$$
\end{lem}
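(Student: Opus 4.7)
The plan is to derive this identity directly from Lemma \ref{lem:basica0}, which has already been established in the paper. Lemma \ref{lem:basica0} asserts that $\int_{-\pi}^\pi \log\frac{1}{\sqrt{2-2\cos\alpha}}\,d\alpha = 0$. Using the half-angle identity $2-2\cos\alpha = 4\sin^2(\alpha/2)$, we have $\sqrt{2-2\cos\alpha} = 2|\sin(\alpha/2)|$, so Lemma \ref{lem:basica0} is equivalent to the statement $\int_{-\pi}^\pi \log(2|\sin(\alpha/2)|)\,d\alpha = 0$.

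Since the integrand is even in $\alpha$, this yields $\int_0^\pi \log(2\sin(\alpha/2))\,d\alpha = 0$. Applying the change of variables $v = \alpha/2$ (so $d\alpha = 2\,dv$) then transforms this into $\int_0^{\pi/2} \log(2\sin v)\,dv = 0$.

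To extend the integration range from $[0,\pi/2]$ to $[0,\pi]$, I would use the symmetry $v \mapsto \pi - v$, under which $\sin v$ is unchanged. This gives $\int_{\pi/2}^\pi \log(2\sin v)\,dv = \int_0^{\pi/2} \log(2\sin w)\,dw = 0$, so $\int_0^\pi \log(2\sin v)\,dv = 0$. Negating yields the lemma's claim.

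There is no substantial obstacle here; the argument is essentially a bookkeeping exercise combining the previously established Lemma \ref{lem:basica0}, the standard half-angle identity, and the symmetry of $\sin$ about $\pi/2$. If one preferred a self-contained proof not relying on Lemma \ref{lem:basica0}, an alternative would be to recognize $\int_0^\pi \log(2\sin v)\,dv$ as the limit of the Riemann sums $\frac{\pi}{n}\sum_{k=1}^{n-1}\log(2\sin(k\pi/n)) = \frac{\pi}{n}\log n \to 0$, using the classical product formula $\prod_{k=1}^{n-1}2\sin(k\pi/n) = n$.
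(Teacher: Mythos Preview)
Your proof is correct and clean. However, it takes a genuinely different route from the paper's own argument.

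The paper proves Lemma~\ref{integ=0} by the substitution $x=\cos v$, which turns the integral into
\[
\int_{-1}^1 \log\frac{1}{2\sqrt{1-x^2}}\,\frac{dx}{\sqrt{1-x^2}}.
\]
Writing $\sqrt{1-x^2}=|x-1|^{1/2}|x+1|^{1/2}$, this becomes (up to the factor~$\pi$) a combination of the equilibrium potential $V^\mu$ of the arcsine measure evaluated at the endpoints $\pm1$, where it is known to equal $\log 2$; the two contributions cancel. In other words, the paper leans on the potential-theoretic fact $V^\mu\equiv\log 2$ on $[-1,1]$, which is already used elsewhere in the paper.

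Your approach is more self-contained from the trigonometric side: you reduce to the already-proved Lemma~\ref{lem:basica0} via the half-angle identity, parity, the change $v=\alpha/2$, and the reflection $v\mapsto\pi-v$. This avoids any appeal to the equilibrium measure and makes the lemma a purely elementary corollary of Lemma~\ref{lem:basica0}. The alternative Riemann-sum argument you sketch (using $\prod_{k=1}^{n-1}2\sin(k\pi/n)=n$) is also valid and would give a proof independent of both Lemma~\ref{lem:basica0} and potential theory. Either of your routes is arguably simpler than the paper's; the paper's version has the advantage of thematic consistency with the logarithmic-energy computations that surround it.
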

\begin{proof}
Using the change of variable $x=\cos(v)$ and taking into account that we can write $\sqrt{1-x^2}=|x-1|^{1/2} \cdot |x+1|^{1/2}$ we get
\begin{align*}
\int_{0}^{\pi} \log\frac{1}{2\sin v} dv=-\log2+\int_{-1}^{1} \log\frac{1}{\sqrt{1-x^2}} \frac{dx}{\pi\sqrt{1-x^2}}=-\log2+\frac{V(1)+V(-1)}{2},
\end{align*}
where $V(x)$ is the equilibrium measure potential, which constantly equals $\log2$.
So the integral vanishes.
\end{proof}
\begin{lem}\label{senointegAsymp}
Let $\textrm{Si}(t)$ the Integral Sine function. The following asymptotic expansion holds:
$$
\int_{0}^{x} \frac{Si(t)}{t}dt=\frac\pi2\log x + \frac{\gamma \pi}{2} + O(x^{-1}), \quad \text{ as } x\to\infty,
$$
being $\gamma$ the Euler constant.
\end{lem}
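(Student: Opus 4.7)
The plan is to reduce the asymptotic to the classical identity
\[
\int_0^\infty \frac{\sin t \, \log t}{t}\, dt = -\frac{\gamma\pi}{2}
\]
via a single integration by parts. Setting $u = \textrm{Si}(t)$ and $dv = dt/t$, the boundary term at $0$ vanishes because $\textrm{Si}(t)=t+O(t^3)$ near the origin, yielding
\[
\int_0^x \frac{\textrm{Si}(t)}{t}\, dt = \textrm{Si}(x)\log x - \int_0^x \frac{\sin t\, \log t}{t}\, dt.
\]
Writing $\int_0^x = \int_0^\infty - \int_x^\infty$ in the second integral and using the identity above,
\[
\int_0^x \frac{\textrm{Si}(t)}{t}\, dt = \textrm{Si}(x)\log x + \frac{\gamma\pi}{2} + \int_x^\infty \frac{\sin t\, \log t}{t}\, dt.
\]

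For the key identity I would use the Mellin transform $F(s) = \int_0^\infty t^{s-1}\sin t\, dt = \Gamma(s)\sin(\pi s/2)$, which is analytic in the strip $-1 < \Re s < 1$. Differentiating and taking $s = 0$ yields $\int_0^\infty \sin(t)\log(t)\, t^{-1}\, dt = F'(0)$; the Taylor expansion
\[
\Gamma(s)\sin(\pi s/2) = \Bigl(\frac{1}{s}-\gamma + O(s)\Bigr)\Bigl(\frac{\pi s}{2} + O(s^3)\Bigr) = \frac{\pi}{2} - \frac{\gamma\pi}{2}\, s + O(s^2)
\]
gives $F'(0) = -\gamma\pi/2$. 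Since $s=0$ lies on the boundary of the strip of absolute convergence, differentiation under the integral must be justified by a regularization, for instance by splitting at $t=1$ and integrating by parts on $[1,\infty)$ to produce a representation analytic up to $\Re s < 1$.

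Finally, one integration by parts in the tail (with $u = \log(t)/t$ and $dv = \sin t\, dt$) gives
\[
\int_x^\infty \frac{\sin t\, \log t}{t}\, dt = \frac{\cos x\, \log x}{x} + O\!\Bigl(\frac{\log x}{x^2}\Bigr),
\]
while the standard expansion $\textrm{Si}(x)=\pi/2-\cos(x)/x+O(x^{-2})$ supplies
\[
\textrm{Si}(x)\log x = \frac{\pi}{2}\log x - \frac{\cos x\, \log x}{x} + O\!\Bigl(\frac{\log x}{x^2}\Bigr).
\]
The oscillatory contributions $\pm \cos(x)\log(x)/x$ cancel exactly, leaving a remainder of size $O(\log x/x^2)$, which is stronger than the claimed $O(x^{-1})$ and completes the proof.

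The delicate step is the Mellin evaluation at the boundary $s=0$, where conditional convergence forbids a naive interchange of differentiation and integration; this is the main technical obstacle. The other notable feature is that the sharpness of the error requires the exact cancellation of the oscillatory leading terms between $\textrm{Si}(x)\log x$ and the tail integral — without this cancellation one only obtains the weaker bound $O(\log x/x)$.
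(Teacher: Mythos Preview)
Your argument is sound and takes a genuinely different route from the paper. The paper proceeds by complex contour integration: it integrates the entire function $t^{-1}\int_0^t(e^{iz}-1)z^{-1}\,dz$ around the boundary of the upper half-disk of radius $x$, bounds the semicircular contribution via Jordan's inequality, and extracts the constant $\gamma$ from the asymptotics of the Cosine Integral $\mathrm{Ci}(x)=\gamma+\log x+\int_0^x(\cos z-1)z^{-1}\,dz=O(x^{-1})$. Your approach stays entirely in real analysis, reducing everything to the Mellin identity $\int_0^\infty t^{-1}\sin t\,\log t\,dt=-\gamma\pi/2$; this is more elementary and makes the appearance of $\gamma$ transparent through the Laurent expansion of $\Gamma(s)$ at $s=0$. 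One small correction is needed: a \emph{single} integration by parts on the tail $\int_x^\infty t^{-1}\sin t\,\log t\,dt$ leaves the remainder $\int_x^\infty\cos t\,(1-\log t)t^{-2}\,dt$, which by absolute-value bounds is only $O(x^{-1}\log x)$, not $O(x^{-2}\log x)$ as you claim, and this is too weak for the stated $O(x^{-1})$. The cleanest repair is to combine with the $\mathrm{Si}(x)\log x$ term \emph{before} estimating: writing $\mathrm{Si}(x)=\pi/2-\int_x^\infty t^{-1}\sin t\,dt$ collapses the two oscillatory pieces into $\int_x^\infty t^{-1}\sin t\,\log(t/x)\,dt$, and after the substitution $t=xu$ one integration by parts (whose boundary term at $u=1$ vanishes) yields the required $O(x^{-1})$ directly. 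Alternatively, a second integration by parts on your tail remainder also recovers $O(x^{-2}\log x)$.
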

\begin{proof}
We will obtain this identity by means of the imaginary part of a complex line integral.
Let us take $x>0$ and consider $C_x = \{z\in\mathbb{C} \,:\, |z|=x; \textrm{Im } z>0\}$ parametrized counterclockwise. Then,
\begin{equation}\label{int0}
\int_{-x}^x \frac{1}{t} \int_{0}^{t} \frac{e^{iz}-1}{z} dz\,dt + \int_{C_x} \frac{1}{t} \int_{0}^{t} \frac{e^{iz}-1}{z} dz\,dt =0
\end{equation}
since we are integrating an entire function along a closed curve.
We now split the integral over the semicircle within three other ones,
\begin{align*}
\int_{C_x} \frac{1}{t} \int_{0}^{t} \frac{e^{iz}-1}{z} dz\,dt & = \int_{C_x} \frac{1}{t} \int_{0}^{x} \frac{e^{iz}-1}{z}  dz\,dt + \int_{C_x} \frac{1}{t} \int_{x}^{t} \frac{e^{iz}}{z} dz\,dt - \int_{C_x} \frac{1}{t} \int_{x}^{t} \frac{1}{z} dz\,dt
\\&=
I_1+I_2-I_3.
\end{align*}
Let us work with each one of these integrals.
\begin{equation}
I_1=(\log (-x)-\log(x)) \int_{0}^{x} \frac{e^{iz}-1}{z}  dz = i \pi \int_{0}^{x} \frac{e^{iz}-1}{z}  dz
\end{equation}
For the second integral, we parametrize $C_x$ as $t=xe^{i\theta}$ and after the change of variable $z=xe^{i\sigma}$, standard computations lead to
\begin{equation}
I_2=-\int_{0}^{\pi} \int_{0}^{\theta} \exp(ixe^{i\sigma}) d\sigma\,d\theta,
\end{equation}
which can be bounded as
\begin{equation}
|I_2|\leq\int_{0}^{\pi} \int_{0}^{\theta} e^{-x\sin\sigma} d\sigma\,d\theta = \int_{0}^{\pi} \int_{\sigma}^{\pi} e^{-x\sin\sigma} d\theta\,d\sigma \leq 2\pi  \int_{0}^{\pi/2} e^{-x\sin\sigma} d\sigma
\end{equation}
Now we use the Jordan's inequality: $\sin\sigma\geq2\sigma/\pi$ for $\sigma\in(0,\pi/2)$, getting
\begin{equation}
|I_2|\leq 2\pi  \int_{0}^{\pi/2} e^{-2x\sigma/\pi} d\sigma = \frac{\pi^2}{x}(1-e^{-x})=O(x^{-1}) \quad \textrm{ as } \quad x\to+\infty.
\end{equation}
The last integral can be computed directly using the parametrization $t=xe^{i\theta}$,
\begin{equation}
I_3=\int_{C_x}\frac{1}{t}(\log t-\log x) dt = -\frac{\pi^2}{2}.
\end{equation}
Then, \eqref{int0} reads as
\begin{equation}
\int_{-x}^x \frac{1}{t} \int_{0}^{t} \frac{e^{iz}-1}{z} dz\,dt + i \pi \int_{0}^{x} \frac{e^{iz}-1}{z}  dz + O(x^{-1}) + \frac{\pi^2}{2} =0,
\end{equation}
from where, taking imaginary part we get
\begin{equation}
2 \int_{0}^{x} \frac{\mathrm{Si}(t)}{t}dt +\pi \int_{0}^x \frac{\cos (z)-1}{z}dz + O(x^{-1}) = 0.
\end{equation}
Finally, in the second term we use the asymptotics of the integral cosine function (\cite[5.2.2, 5.2.9, 5.2.34 and 5.2.35]{AS1964}),
\begin{align*}
\mathrm{Ci}(x)=\gamma+\log x + \int_{0}^x \frac{\cos (z)-1}{z}dz = O(x^{-1})  \quad \textrm{ as } \quad x\to+\infty,
\end{align*}
and then the announced result is proved.
\end{proof}

\bibliographystyle{amsplain}
\bibliography{bibliografia}

\providecommand{\bysame}{\leavevmode\hbox to3em{\hrulefill}\thinspace}
\providecommand{\MR}{\relax\ifhmode\unskip\space\fi MR }
\providecommand{\MRhref}[2]{%
  \href{http://www.ams.org/mathscinet-getitem?mr=#1}{#2}
}
\providecommand{\href}[2]{#2}
\begin{thebibliography}{10}

\bibitem{AS1964}
Milton Abramowitz and Irene~A. Stegun, \emph{Handbook of mathematical functions
  with formulas, graphs, and mathematical tables}, National Bureau of Standards
  Applied Mathematics Series, vol.~55, For sale by the Superintendent of
  Documents, U.S. Government Printing Office, Washington, D.C., 1964.

\bibitem{AZ2015}
Kasra Alishahi and Mohammadsadegh Zamani, \emph{The spherical ensemble and
  uniform distribution of points on the sphere}, Electron. J. Probab.
  \textbf{20} (2015), no. 23, 27.

\bibitem{ABS2011}
Diego Armentano, Carlos Beltr\'{a}n, and Michael Shub, \emph{Minimizing the
  discrete logarithmic energy on the sphere: the role of random polynomials},
  Trans. Amer. Math. Soc. \textbf{363} (2011), no.~6, 2955--2965.

\bibitem{BE2018}
Carlos Beltr\'{a}n and Uju\'{e} Etayo, \emph{The projective ensemble and
  distribution of points in odd-dimensional spheres}, Constr. Approx.
  \textbf{48} (2018), no.~1, 163--182.

\bibitem{BF---}
Carlos Beltr\'an and Damir Ferizovic, \emph{Approximation to uniform
  distribution in {$\rm SO(3)$}}, Constr. Approx. \textbf{52} (2020), no.~2,
  283--311. \MR{4170302}

\bibitem{BMOC2016}
Carlos Beltr\'{a}n, Jordi Marzo, and Joaquim Ortega-Cerd\`a, \emph{Energy and
  discrepancy of rotationally invariant determinantal point processes in high
  dimensional spheres}, J. Complexity \textbf{37} (2016), 76--109.

\bibitem{B2014}
Robert~J. Berman, \emph{Determinantal point processes and fermions on complex
  manifolds: large deviations and bosonization}, Comm. Math. Phys. \textbf{327}
  (2014), no.~1, 1--47.

\bibitem{BHS2019}
Sergiy~V. Borodachov, Douglas Hardin~P., and Edward~B. Saff, \emph{Discrete
  energy on rectifiable sets}, Springer Monographs in Mathematics, Springer,
  New York, NY, 2019.

\bibitem{NIST}
\emph{{\it NIST Digital Library of Mathematical Functions}},
  http://dlmf.nist.gov/, Release 1.0.25 of 2019-12-15, F.~W.~J. Olver, A.~B.
  {Olde Daalhuis}, D.~W. Lozier, B.~I. Schneider, R.~F. Boisvert, C.~W. Clark,
  B.~R. Miller, B.~V. Saunders, H.~S. Cohl, and M.~A. McClain, eds.

\bibitem{F1932}
Leopold Fej\'{e}r, \emph{Bestimmung derjenigen {A}bszissen eines {I}ntervalles,
  f\"{u}r welche die {Q}uadratsumme der {G}rundfunktionen der {L}agrangeschen
  {I}nterpolation im {I}ntervalle ein {M}\"{o}glichst kleines {M}aximum
  {B}esitzt}, Ann. Scuola Norm. Super. Pisa Cl. Sci. (2) \textbf{1} (1932),
  no.~3, 263--276.

\bibitem{FL2001}
Chelo Ferreira and Jos\'{e}~L. L\'{o}pez, \emph{An asymptotic expansion of the
  double gamma function}, J. Approx. Theory \textbf{111} (2001), no.~2,
  298--314.

\bibitem{GR2015}
I.~S. Gradshteyn and I.~M. Ryzhik, \emph{Table of integrals, series, and
  products}, eighth ed., Elsevier/Academic Press, Amsterdam, 2015, Translated
  from the Russian, Translation edited and with a preface by Daniel Zwillinger
  and Victor Moll, Revised from the seventh edition [MR2360010].

\bibitem{HM2019}
Adrien Hardy and Myl\`ene Ma\"{\i}da, \emph{Determinantal point processes},
  Eur. Math. Soc. Newsl. (2019), no.~112, 8--15.

\bibitem{HKPV2009}
J.~Ben Hough, Manjunath Krishnapur, Yuval Peres, and B\'{a}lint Vir\'{a}g,
  \emph{Zeros of {G}aussian analytic functions and determinantal point
  processes}, University Lecture Series, vol.~51, American Mathematical
  Society, Providence, RI, 2009.

\bibitem{K2006}
Manjunath~Ramanatha Krishnapur, \emph{Zeros of random analytic functions},
  ProQuest LLC, Ann Arbor, MI, 2006, Thesis (Ph.D.)--University of California,
  Berkeley.

\bibitem{MOC2018}
Jordi Marzo and Joaquim Ortega-Cerd\`a, \emph{Expected {R}iesz energy of some
  determinantal processes on flat tori}, Constr. Approx. \textbf{47} (2018),
  no.~1, 75--88.

\bibitem{KS1997}
E.~B. Saff and A.~B.~J. Kuijlaars, \emph{Distributing many points on a sphere},
  Math. Intelligencer \textbf{19} (1997), no.~1, 5--11.

\bibitem{SS1993}
Michael Shub and Steve Smale, \emph{Complexity of {B}ezout's theorem. {III}.
  {C}ondition number and packing}, vol.~9, 1993, Festschrift for Joseph F.
  Traub, Part I, pp.~4--14.

\bibitem{S1998}
Steve Smale, \emph{Mathematical problems for the next century}, Math.
  Intelligencer \textbf{20} (1998), no.~2, 7--15.

\bibitem{S1975}
G\'{a}bor Szeg\H{o}, \emph{Orthogonal polynomials}, fourth ed., American
  Mathematical Society, Providence, R.I., 1975, American Mathematical Society,
  Colloquium Publications, Vol. XXIII.

\bibitem{W1952}
L.~L. Whyte, \emph{Unique arrangements of points on a sphere}, Amer. Math.
  Monthly \textbf{59} (1952), 606--611.

\end{thebibliography}
\end{document}